\documentclass[reqno]{amsart}
\usepackage{amssymb,amscd,verbatim, amsthm,graphics, color,latexsym,amsmath,multicol}
\usepackage{fancybox}
\usepackage{longtable}

\usepackage[all]{xy}

\newcommand \fk[1]{{{\mathfrak #1}}}
\newcommand \C[1]{{\mathcal #1}}

\newcommand \wti[1]{{\widetilde {#1}}}

\newcommand\fg{\mathfrak g}

\newcommand \bC{{\mathbb C}}

\newcommand \bH{{\mathbb H}}
\newcommand \bR{{\mathbb R}}
\newcommand \bZ{{\mathbb Z}}

\newcommand\CO{{\C O}}

\newcommand\ep{{\epsilon}}

\newcommand\al{{\alpha}}

\newcommand\fa{{\mathfrak a}}
\newcommand\fh{{\mathfrak h}}

\newtheorem{theorem}{Theorem}[section]

\newtheorem{corollary}[theorem]{Corollary}
\newtheorem{lemma}[theorem]{Lemma}
\newtheorem{proposition}[theorem]{Proposition}
\newtheorem{definition}[theorem]{Definition}
\newtheorem{remark}[theorem]{Remark}
\newtheorem{example}[theorem]{Example}

\newcommand\Hom{\operatorname{Hom}}

\newcommand\tr{\operatorname{tr}}

\newcommand\im{\operatorname{im}}

\newcommand\triv{\mathsf{triv}}

\newcommand\Irr{\mathsf{Irr}}

\newcommand\gr{\mathsf{gr}}
\newcommand\Pin{\mathsf{Pin}}
\newcommand\ab{\mathsf{ab}}

\newcommand\bh{\mathbf {h}}
\newcommand\bfH{\mathbf {H}}
\newcommand\bfA{\mathbf {A}}
\newcommand\ba{\mathbf {a}}

\newcommand\bom{\mathbf{\omega}}
\newcommand\Id{\operatorname{Id}}
\newcommand\Spec{\operatorname{Spec}}

\def\<{\langle} 
\def\>{\rangle}

\numberwithin{equation}{subsection}

\begin{document}

\title{Dirac cohomology for symplectic reflection algebras}

\author{Dan Ciubotaru}
        \address[D. Ciubotaru]{Mathematical Institute\\ University of
          Oxford\\ Oxford, OX2 6GG, UK}
        \email{dan.ciubotaru@maths.ox.ac.uk}

\thanks {It is a pleasure to thank B. Kr\"otz and E. Opdam for the invitation to give a series of lectures on the theory of the Dirac operator for Hecke algebras at the Spring School ``Representation theory and geometry of reductive groups'', Heiligkreuztal 2014, where some of these ideas  crystallized. I also thank J.S. Huang, K.D. Wong, and the referee for corrections, helpful comments, and references.}

\begin{abstract} We define uniformly the notions of Dirac operators and Dirac cohomology  in the framework of the Hecke algebras introduced by Drinfeld \cite{Dr}. We generalize in this way, the Dirac cohomology theory for Lusztig's graded affine Hecke algebras defined in \cite{BCT} and further developed in \cite{BCT,COT,Ci,CH,Cha2}. We  apply these constructions to the case of the symplectic reflection algebras defined by Etingof-Ginzburg \cite{EG}, particularly to rational Cherednik algebras for real or complex reflection groups. As applications, we give criteria for unitarity of modules in category $\CO$ and we show that the $0$-fiber of the Calogero-Moser space admits a description in terms of a certain ``Dirac morphism'' originally defined by Vogan for representations of real reductive groups. 
\end{abstract}

\maketitle

\setcounter{tocdepth}{1}
\tableofcontents

\section{Introduction}
\subsection{}The Dirac operator has played an important role in the representation theory of real reductive groups, see for example \cite{AS}, \cite{Ko}, \cite{Pa}, and the monograph \cite{HP2}. The notion of Dirac cohomology for  admissible $(\fg,K)$-modules of real reductive groups was introduced by Vogan \cite{Vo} around 1997. The Dirac cohomology of a $(\fg,K)$-module  is a certain finite dimensional representation of (a pin cover of) the maximal compact subgroup $K$. One of the main ideas, ``Vogan's conjecture'', proved by Huang and Pad\v zi\'c \cite{HP} says that, if nonzero, the Dirac cohomology of an irreducible module $X$ uniquely determines the infinitesimal character of $X$. 

\subsection{}Motivated by the analogy between the theory of graded affine Hecke algebras $\bH$ of reductive $p$-adic groups, as defined by Lusztig \cite{Lu}, and certain elements of the representation theory of real reductive groups, in joint work with Barbasch and Trapa \cite{BCT}, we defined a Dirac operator and the notion of Dirac cohomology for $\bH$-modules. This theory was subsequently developed in several papers, including \cite{COT,CT,CH,Cha2} and it led to interesting results, such as a geometric realization in the kernel of global Dirac operators of the irreducible discrete series $\bH$-modules \cite{COT}, a partial analogue of the realization of discrete series representations for real semisimple groups by Atiyah-Schmid \cite{AS} and Parthasarathy \cite{Pa}. An important element that occurs in these constructions for $\bH$ is a certain pin cover $\wti W$ of finite Coxeter groups $W$ whose representations turned out to have surprising relations with the geometry of the nilpotent cone, see \cite{Ci,CH,CT}, also \cite{Cha} for noncrystallographic Coxeter groups.  In particular, when $\bH$ has equal parameters, the Dirac morphism stemming from the analogue of Vogan's conjecture for $\bH$ leads to a map 
\begin{equation}
\zeta^*: \Irr(\wti W)\to \Spec (Z(\bH))=\fh^*/W,
\end{equation}
whose image lies in the set of $W$-conjugates of neutral elements for Lie triples corresponding to nilpotent orbits whose connected centralizers are solvable. (Here, one regards $\fh^*$ as a Cartan subalgebra of the Langlands dual complex Lie algebra.)

\subsection{}The graded affine Hecke algebra is a particular case of a more general class of algebras $\bfH_\ba$ (Definition \ref{d:Drinfeld}) introduced by Drinfeld \cite{Dr}, see also Ram-Shepler \cite{RS}. These associative unital $\bC$-algebras are defined in terms of a finite group $W$ that acts linearly on a finite dimensional complex vector space $V$, and a family of skew-symmetric forms $\ba=(a_w)_{w\in W}$. The requirement is that $\bfH_\ba$ satisfies a Poincar\'e-Birkhoff-Witt type property. In the present paper, we define uniformly the notions of Dirac operator $\C D$ and Dirac cohomology  in the general framework of algebras $\bfH_\ba$. The only additional hypothesis that we need is that $V$ has a non degenerate $W$-invariant symmetric bilinear form. We prove the basic facts about $\C D$ such as:
\begin{itemize}
 \item the formula for $\C D^2$ (Theorem \ref{t:D2}), and 
 \item the analogous version of Vogan's conjecture for Dirac cohomology (Theorem \ref{t:ker-dW} and Theorem \ref{t:vogan-conj}), 
 \end{itemize}
 generalizing in this way the results of \cite{BCT}. We  apply these constructions to the case of symplectic reflection algebras defined by Etingof and Ginzburg \cite{EG}, in particular, to rational Cherednik algebras $\bfH_{t,c}$ of real and complex reflection groups and arbitrary parameters $t,c$. (A different Dirac operator and the formula for its square in the setting of rational Cherednik algebras of finite Weyl groups at $t=0$ was previously defined in unpublished joint work with Trapa \cite{CT2}.)

The analogy for the Dirac theories is as follows: one should think that the Dirac operator for the graded affine Hecke algebra $\bH$ corresponds to that in the symmetric case of a real reductive group (in fact, this is more than an analogy), while the one for the rational Cherednik algebras $\bfH_{t,c}$ corresponds to the Kostant's cubic Dirac operator \cite{Ko} for the minimal parabolic case, i.e., the Verma modules case. 

\subsection{}
We give two applications of these methods in the paper. 

Firstly, we establish criteria for unitarity of modules in the category $\CO$ introduced by Ginzburg-Guay-Opdam-Rouquier \cite{GGOR}, in particular strengthening a non-unitarity criterion from \cite{ES}. 

Secondly, as proved in \cite{EG}, $\Spec(Z(\bfH_{0,c}))$ can be thought of as a generalization of the Calogero-Moser space studied in \cite{KKS}. We show that the $0$-fiber $\Upsilon^{-1}(0)$ defined by Gordon \cite{Go} in the generalized Calogero-Moser space  admits a description in terms of a certain ``Dirac morphism'' (see (\ref{e:spec-0}), Theorem \ref{t:CM} and Corollary \ref{c:zeta=theta}) that appears in the formulation of the analogue of Vogan's conjecture:
\begin{equation}
\zeta^*_c:\Irr(W)\to \Upsilon^{-1}(0).
\end{equation} 
It is expected, see \cite{GM}, that the Calogero-Moser partition of $\Irr(W)$ according to $\Upsilon^{-1}(0)$  refines (and often coincides) with the Lusztig-Rouquier partition  \cite{Lu2,Ro} of $\Irr(W)$ into families. The identification of the Calogero-Moser partition with the ``Dirac partition" given by the morphism $\zeta^*_c$ opens up the perspective of studying these conjectures from the point of view of the Dirac operator. For first results in this direction, see \cite{Ci2}.

\section{The Dirac operator for Drinfeld's Hecke algebras}

\subsection{Drinfeld's graded Hecke algebra} Let $V$ be a finite dimensional complex vector space  and $W$  a finite subgroup of $GL(V)$. Suppose that for every $w\in W$, we have a skew-symmetric bilinear form
\begin{equation}
a_w: V\times V\to \bC.
\end{equation}
Denote the family of skew-symmetric forms by $\ba=(a_w)_{w\in W}.$

\begin{definition}[\cite{Dr}]\label{d:Drinfeld} Let $T(V)$ be the tensor algebra of $V$. Define the algebra $\bfH=\bfH_\ba$ to be the quotient of $T(V)\otimes \bC[W]$ by the relations:
\begin{equation}
\begin{aligned}
&w\cdot v\cdot w^{-1}=w(v),\\
&[u,v]=\sum_{w\in W} a_w(u,v) w,\\
\end{aligned}
\end{equation}
for all $u,v\in V$ and $w\in W.$ Define a filtration of $\bfH_\ba$ by giving the elements in $\bC[W]$ degree $0$ and the elements $v\in V$ degree $1$. The algebra $\bfH_\ba$ is called a Drinfeld graded Hecke algebra if it has the PBW property, i.e., the associated graded object is isomorphic to $\bfH_0=S(V)\rtimes \bC[W].$
\end{definition}

The PBW property imposes strict conditions on the forms $a_w$. Define 
\begin{equation}
W(\ba)=\{w\in W: a_w\neq 0\}.
\end{equation}
Let $\ker a_w$ be the radical of the form $a_w.$ Let $V^w$ be the subspace of fixed points elements of $w$ in $V$. 

\begin{proposition}[\cite{Dr},{\cite[Theorem 1.9]{RS}}]\label{p:ram} The algebra $\bfH$ has the PBW property if and only if the following properties hold simultaneously: 
\begin{enumerate}
\item For every $h\in W$, $a_{h^{-1}wh}(u,v)=a_w(h(u),h(v)),$ for all $u,v\in V$.
\item For every $w\in W(\ba)\setminus\{1\}$,  $\ker a_w=V^w$ and $\dim V^w=\dim V-2$.
\item For every $w\in W(\ba)\setminus\{1\}$ and every $h\in Z_W(w)$, $\det(h|_{(V^w)^\perp})=1,$ where $(V^w)^\perp=\{v-w(v):v\in V\}$.
\end{enumerate}
\end{proposition}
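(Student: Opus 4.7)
The plan is to apply a PBW-type criterion (Bergman's diamond lemma, or the Braverman--Gaitsgory flatness criterion) to the presentation of $\bfH_\ba$ as a quotient of the smash product $T(V)\rtimes\bC[W]$ by the commutator relations $r(u,v):=[u,v]-\sum_w a_w(u,v)\,w$. Since the cross relations $wv=w(v)w$ are built into the smash product, PBW reduces to checking compatibility of the bracket with (a) conjugation by $W$ and (b) the Jacobi identity modulo lower-order terms.

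Compatibility with conjugation, i.e.\ $h\,r(u,v)\,h^{-1}=r(h(u),h(v))$, becomes, after moving $h$ past each summand, $a_{h^{-1}wh}(u,v)=a_w(h(u),h(v))$, which is condition (1). For the Jacobi identity, using $wz=w(z)w$ yields
\begin{equation*}
[[u,v],z]+[[v,z],u]+[[z,u],v]=\sum_{w\in W}\bigl(a_w(u,v)(w(z)-z)+a_w(v,z)(w(u)-u)+a_w(z,u)(w(v)-v)\bigr)w,
\end{equation*}
and linear independence of $W$ forces the bracketed coefficient in $V$ to vanish for each $w$. For $w\in W(\ba)\setminus\{1\}$: if $\dim(1-w)V\geq 3$, picking $u,v,z$ with $(1-w)u,(1-w)v,(1-w)z$ linearly independent kills $a_w(u,v),a_w(v,z),a_w(z,u)$, and varying the triple forces $a_w\equiv 0$; if $\dim(1-w)V\leq 1$, a short calculation combining the cyclic identity with (1) applied to $h=w$ (the nontrivial eigenvalue of $w$ on $(V^w)^\perp$ being incompatible with any nonzero solution of the reduced equation) again forces $a_w=0$. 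Hence $\dim V^w=\dim V-2$. Specializing then $u\in V^w$ so that $w(u)-u=0$ and choosing $v,z$ whose images span the $2$-plane $(1-w)V$ yields $a_w(u,\cdot)=0$, so $V^w\subseteq\ker a_w$; equality holds by dimensions, giving (2). Finally $a_w$ restricts to a nondegenerate skew form on the $2$-plane $(V^w)^\perp$; by (1) any $h\in Z_W(w)$ preserves $V^w$ and this form, which in dimension two is equivalent to $\det(h|_{(V^w)^\perp})=1$, proving (3).

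For sufficiency, under (1)--(3) the cyclic sum vanishes term-by-term thanks to the classical $2$-dimensional identity $\omega(a,b)c+\omega(b,c)a+\omega(c,a)b=0$ for any three vectors $a,b,c$ in a $2$-plane with skew form $\omega$ (applied to $(1-w)u,(1-w)v,(1-w)z\in(1-w)V$, a $2$-plane by (2)); together with (1) this resolves all overlap ambiguities, and Bergman's diamond lemma yields the PBW property. The main conceptual obstacle I anticipate is the case $\dim(1-w)V=1$ (pseudo-reflections), where the Jacobi identity alone does not force $a_w=0$ and one must invoke (1) with $h=w$; the remaining issue is a diamond-lemma verification that the $W$-compatibility and Jacobi triples exhaust the overlap ambiguities of the smash-product presentation, which is the technical heart of Ram--Shepler's Theorem 1.9.
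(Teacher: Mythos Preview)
The paper does not supply its own proof of this proposition; it is quoted directly from \cite{Dr} and \cite[Theorem 1.9]{RS}. Your sketch is a faithful outline of the Ram--Shepler argument: reduce PBW for $T(V)\rtimes\bC[W]$ modulo the relations $[u,v]=\sum_w a_w(u,v)w$ to (a) $W$-equivariance of the bracket and (b) the Jacobi identity, then read off (1) from (a) and (2)--(3) from (b) together with (1). Your case analysis on $\dim(1-w)V$ is correct; in particular you are right that the pseudo-reflection case $\dim(1-w)V=1$ needs the $w$-invariance of $a_w$ from (1) (the Jacobi constraint alone only gives $a_w=\phi\wedge\psi$ with $\phi$ the functional $u\mapsto(w-1)u$, and one then uses that the unique nontrivial eigenvalue of $w^*$ on $V^*$ leaves no room for a nonzero $w$-invariant $\phi\wedge\psi$). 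Your derivation of (3) from (1)+(2) via preservation of a nondegenerate skew form on a $2$-plane is also correct. The one step you defer to the reference---that $W$-equivariance and the Jacobi triples exhaust the overlap ambiguities in the diamond-lemma/Braverman--Gaitsgory sense---is exactly the combinatorial core of \cite[Theorem 1.9]{RS}, so there is no gap beyond what the paper itself leaves to its citation.
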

In particular, this means that $W(\ba)$ is a union of conjugacy classes in $W$.
We assume from now on that $\bfH$ has the PBW property.

\subsection{The Clifford algebra} Because of the constructions in Clifford theory, we need to assume that $V$ carries a $W$-invariant non degenerate symmetric bilinear form $\langle~,~\rangle$. Let $O(V)$ be the corresponding orthogonal group. Then we have  $W\subset O(V).$ Let $\det$ denote the determinant character of $O(V)$. For every subspace $U$ of $V$, denote by $U^\perp$ the orthogonal complement of $U$ in $V$ with respect to $\<~,~\>.$ This notation is compatible with the definition of $(V^w)^\perp$ from Proposition \ref{p:ram}.

\smallskip

Let $C(V)$ be the complex Clifford algebra defined by $V$ and $\<~,~\>$. For a survey of Clifford algebra theory see \cite{HP2} and \cite{Me}. 
The defining relation of $C(V)$ is
\begin{equation}
v\cdot v'+v'\cdot v=-2\langle v,v'\rangle,\text{ for all }v,v'\in V.
\end{equation}
The algebra $C(V)$ has a filtration $(C^n(V))$ by degrees and the associated graded object is $\bigwedge V.$ It also has 
 a $\bZ/2\bZ$-grading given by the parity of degrees. Denote the $\bZ/2\bZ$-grading of $C(V)$ by $C(V)=C(V)_0\oplus C(V)_1.$ Define an algebra automorphism  
 \begin{equation}
 \ep:C(V)\to C(V), \text{ such that } \ep=\Id\text{ on }C(V)_0 \text{ and }\ep=-\Id\text{ on }C(V)_1,
 \end{equation}
We also extend $\ep$ to an automorphism of $\bfH\otimes C(V)$ by making it the identity on $\bfH.$

Define the transpose of $C(V)$ as the anti-involution:
 \begin{equation}
 { }^t:C(V)\to C(V),\ v^t=-v,\ v\in V,\ (ab)^t=b^t a^t,\ a,b\in C(V).
\end{equation}
The Pin group is the subgroup of the units of $C(V)$ defined by
\begin{equation}
\Pin(V)=\{a\in C(V)^\times: \ep(a)\cdot V\cdot a^{-1}\subset V,\ a^t=a^{-1}\}.
\end{equation}
This is a central double cover of $O(V)$ where the projection is given by $p:\Pin(V)\to O(V)$, $p(a)(v)=\ep(a)\cdot v\cdot a^{-1},$ $a\in\Pin(V)$, $v\in V$. 

Suppose $v\in V$ is such that $\<v,v\>\neq 0.$ Let $s_v\in O(V)$ be the reflection with respect to the hyperplane perpendicular on $v$, i.e., $s_v(x)=x-\frac 2{\<v,v\>}\<x,v\>v.$ Then
\begin{equation}\label{e:preimage}
p^{-1}(s_v)=\left\{\pm \frac 1{|v|} v\right\}\in \Pin(V),
\end{equation}
where $|v|$ is a choice of $\sqrt{\<v,v\>}$. 

Define the pin double cover of $W$ as
\begin{equation}
\wti W:=p^{-1}(W)\subset \Pin(V).
\end{equation}

\begin{lemma}\label{l:det}
For every $a\in \Pin(V)$, we have $\ep(a)a^{-1}=\det(p(a))$, where $\det$ is the determinant character of $O(V)$.
\end{lemma}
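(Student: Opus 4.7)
The plan is to express every element of $\Pin(V)$ as a product of non-isotropic vectors in $V$ and then verify the equality by a parity count on each side. By the Cartan-Dieudonn\'e theorem, any element of $O(V)$ is a product of reflections $s_{v_1}\cdots s_{v_k}$ with each $v_i\in V$ non-isotropic. Combined with (\ref{e:preimage}), which identifies the two preimages of $s_{v_i}$ in $\Pin(V)$ as $\pm v_i/|v_i|$, this exhibits a lift to $\Pin(V)$ of every element of $O(V)$ as $\pm v_1\cdots v_k\cdot(|v_1|\cdots|v_k|)^{-1}$. The nontrivial element of $\ker p$ is itself expressible in this form: the defining Clifford relation gives $v^2=-\<v,v\>$, so $(v/|v|)^2=-1$ for any non-isotropic $v$. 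Hence every $a\in\Pin(V)$ can be written as $a=v_1\cdots v_k$ with $v_i\in V$ non-isotropic, after absorbing the scalars.

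Given such a decomposition, the computation is immediate. Since $V\subset C(V)_1$, the automorphism $\ep$ acts by $-1$ on each $v_i$, so
\begin{equation*}
\ep(a)=\ep(v_1)\cdots\ep(v_k)=(-1)^kv_1\cdots v_k=(-1)^ka,
\end{equation*}
which gives $\ep(a)a^{-1}=(-1)^k$. On the other hand, $p(a)=s_{v_1}\cdots s_{v_k}$ is a product of $k$ reflections in $O(V)$, each of determinant $-1$, so $\det(p(a))=(-1)^k$. The two sides agree.

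The one point requiring care is the rewriting of an arbitrary Pin element as a product of vectors in $V$. This relies on (\ref{e:preimage}) together with the observation above that $\ker p=\{\pm 1\}$ already lies in the subgroup generated by such products (via $-1=v^2/\<v,v\>$), so that no sign ambiguity obstructs the decomposition. Once this is in hand, the identity reduces to the elementary statement that the parity of the number of reflection factors in a product decomposition, which controls $\det(p(a))$, equals the $\bZ/2\bZ$-degree parity on $C(V)$ measured by $\ep$.
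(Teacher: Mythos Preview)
Your proof is correct and follows essentially the same approach as the paper: both arguments reduce to checking the identity on unit vectors, using that $\Pin(V)$ is generated by such elements. The paper compresses this into the single sentence ``It is sufficient to verify the claim for $a=\frac{1}{|v|}v$,'' whereas you spell out the justification via Cartan--Dieudonn\'e and the explicit description of $\ker p$; so your version is simply a more detailed rendering of the same idea.
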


\begin{proof}
It is sufficient to verify the claim for $a=\frac 1{|v|}v\in V$ where $|v|\neq 0$. Then $\ep(a)a^{-1}=-a\cdot a^{-1}=-1$. On the other hand $p(a)=s_v$, and $\det(s_v)=-1.$
\end{proof}

\subsection{The Dirac element} Let $\{v_i\}$ and $\{v^i\}$ be dual bases of $V$ with respect to the form $\<~,\>.$ Define the Dirac element
\begin{equation}
\C D=\sum_i v_i\otimes v^i\in\bfH\otimes C(V).
\end{equation}
This definition does not depend on the choice of dual bases.

Define a group homomorphism $\Delta: \wti W\to \bfH\otimes C(V)$ by setting
\begin{equation}
\Delta(\wti w)=p(\wti w)\otimes \wti w,\quad \wti w\in\wti W.
\end{equation}
Extend this linearly to a map $\Delta:\bC[\wti W]\to\bfH\otimes C(V).$

\begin{lemma}\label{l:D-inv}
For every $\wti w\in \wti W$, we have the invariance property:
\begin{equation}
\Delta(\wti w)\C D\Delta(\wti w^{-1})=\det(p(\wti w)) \C D.
\end{equation}
\end{lemma}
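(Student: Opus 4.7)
The plan is a direct computation unfolding both factors of the tensor product, using the defining relations of $\bfH$ and $\Pin(V)$ separately.

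First, I would expand
\[
\Delta(\wti w)\,\C D\,\Delta(\wti w^{-1}) \;=\; \sum_i p(\wti w)\,v_i\, p(\wti w)^{-1}\,\otimes\, \wti w\, v^i\, \wti w^{-1}.
\]
In the $\bfH$-factor, set $w = p(\wti w)\in W$. The first Drinfeld relation $w\cdot v\cdot w^{-1}=w(v)$ immediately gives $p(\wti w)\,v_i\,p(\wti w)^{-1} = w(v_i)$.

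For the Clifford factor, the defining condition of $\Pin(V)$ says $\ep(\wti w)\cdot v^i\cdot \wti w^{-1}=w(v^i)$. To turn $\ep(\wti w)$ into $\wti w$, I would apply Lemma \ref{l:det}, which yields $\ep(\wti w) = \det(w)\,\wti w$. Substituting, we get
\[
\det(w)\,\wti w\, v^i\,\wti w^{-1} = w(v^i), \qquad\text{hence}\qquad \wti w\, v^i\,\wti w^{-1} = \det(w)\,w(v^i),
\]
using $\det(w)^2=1$. Putting the two factors together,
\[
\Delta(\wti w)\,\C D\,\Delta(\wti w^{-1}) \;=\; \det(w)\,\sum_i w(v_i)\otimes w(v^i).
\]

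Finally, since $\<~,~\>$ is $W$-invariant, $\{w(v_i)\}$ and $\{w(v^i)\}$ remain dual bases of $V$. Because the definition of $\C D$ is independent of the choice of dual bases, $\sum_i w(v_i)\otimes w(v^i)=\C D$, and the statement follows.

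There is no real obstacle here; the only place where one can go astray is the sign bookkeeping in the Clifford factor, where one must resist the temptation to write $\wti w\,v^i\,\wti w^{-1}=w(v^i)$ and instead use $\ep$ together with Lemma \ref{l:det} to extract the $\det(p(\wti w))$ factor. This is precisely where the twist in the statement arises.
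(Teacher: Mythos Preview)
Your proof is correct and follows essentially the same approach as the paper: expand the conjugation in each tensor factor, use the Drinfeld relation in $\bfH$, use the $\Pin$-relation together with Lemma~\ref{l:det} in $C(V)$, and conclude via basis independence of $\C D$. The only difference is cosmetic—you spell out explicitly that $\{w(v_i)\},\{w(v^i)\}$ are dual bases by $W$-invariance of $\langle~,~\rangle$, whereas the paper absorbs this into the phrase ``independent of the bases.''
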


\begin{proof}
We have $\Delta(\wti w)\C D\Delta(\wti w^{-1})=\sum_i p(\wti w) v_i p(\wti w)^{-1}\otimes \wti w\cdot v^i\cdot \wti w^{-1}=\sum_i p(\wti w)(v_i)\otimes (\wti w \ep(\wti w)^{-1}) p(\wti w)(v^i).$ The claim follows from Lemma \ref{l:det} and the fact that $\C D$ is independent of the bases.
\end{proof}

One of the main tools is the computation of $\C D^2$. Define
\begin{equation}\label{e:bh}
\bh=\sum_{i} v_i v^i\in \bfH.
\end{equation}
This element is independent of the dual bases and moreover, $\bh\in\bfH^W.$

For every $w\in W(\ba)$, define
\begin{equation}\label{e:kappa}
\kappa_w=\sum_{i,j} a_w(v_i,v^j)v^i v_j\in C(V).
\end{equation}
This is also independent of the bases, in fact it can be defined as follows. Since $a_w$ is a skew-symmetric bilinear form on $V$, $\kappa_w$ is the element of $C(V)$ obtained via the identifications:
\[a_w\in ({\bigwedge}^2 V)^*\cong {\bigwedge}^2(V^*)\cong {\bigwedge}^2 V\rightarrow C^2(V)\subset C(V).\]
The last inclusion comes from the Chevalley map $\bigwedge V\to C(V)$, which gives a section of the natural map $C(V)\to \bigwedge V$ given by taking the associated graded algebra. The identification $V^*\cong V$ is via our non degenerate form $\<~,~\>.$ 

If $w=1\in W(\ba)$, then by Proposition \ref{p:ram}, $a_1\in (({\bigwedge}^2 V)^*)^W$, and so $\kappa_1\in C^2(V)^W.$

We calculate $\C D^2$:
\begin{lemma} \label{l:D2}
\begin{equation}
\C D^2=-\bh\otimes 1+ \frac 12\sum_{w\in W(\ba)} w\otimes \kappa_w.
\end{equation}
\end{lemma}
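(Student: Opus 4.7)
The plan is to expand $\C D^2=\sum_{i,j}v_iv_j\otimes v^iv^j$ and split each tensor factor into its symmetric and antisymmetric components with respect to $i\leftrightarrow j$. Writing
\[
v_iv_j=\tfrac12(v_iv_j+v_jv_i)+\tfrac12[v_i,v_j],
\]
and likewise for $v^iv^j$ in $C(V)$, the cross terms of type (symmetric)$\otimes$(antisymmetric) cancel under the double sum after relabeling $i\leftrightarrow j$, leaving
\[
\C D^2=\tfrac14\sum_{i,j}(v_iv_j+v_jv_i)\otimes(v^iv^j+v^jv^i)+\tfrac14\sum_{i,j}[v_i,v_j]\otimes[v^i,v^j].
\]
The task is then to match the first piece with $-\bh\otimes 1$ and the second with $\frac12\sum_{w\in W(\ba)} w\otimes\kappa_w$.

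For the symmetric-symmetric piece, the defining Clifford relation $v^iv^j+v^jv^i=-2\langle v^i,v^j\rangle$ scalarizes the second tensor factor, reducing it to $-\frac12\sum_{i,j}\langle v^i,v^j\rangle(v_iv_j+v_jv_i)\otimes 1$. Because $\{v_i\}$ and $\{v^j\}$ are dual bases, $\sum_j\langle v^i,v^j\rangle v_j=v^i$, so $\sum_{i,j}\langle v^i,v^j\rangle v_iv_j=\sum_i v_iv^i=\bh$; the parallel sum with $v_jv_i$ in place of $v_iv_j$ also equals $\bh$ by swapping dummy indices and using the symmetry of $\<~,~\>$. This contributes exactly $-\bh\otimes 1$.

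For the antisymmetric-antisymmetric piece, the PBW relation $[v_i,v_j]=\sum_{w\in W(\ba)} a_w(v_i,v_j)w$ in $\bfH$ combined with the antisymmetry of $a_w$ yields
\[
\sum_{i,j}[v_i,v_j]\otimes[v^i,v^j]=\sum_{w}w\otimes\sum_{i,j}a_w(v_i,v_j)(v^iv^j-v^jv^i)=2\sum_w w\otimes\sum_{i,j}a_w(v_i,v_j)\,v^iv^j,
\]
the last equality because swapping dummy indices and applying $a_w(v_j,v_i)=-a_w(v_i,v_j)$ turns $\sum a_w(v_i,v_j)v^jv^i$ into $-\sum a_w(v_i,v_j)v^iv^j$. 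It remains to identify $\sum_{i,j}a_w(v_i,v_j)v^iv^j$ with $\kappa_w=\sum_{i,j}a_w(v_i,v^j)v^iv_j$. This is a basis-independence statement: both expressions compute the image of $a_w$ under the chain $(\bigwedge^2 V)^*\simeq\bigwedge^2 V\hookrightarrow C(V)$ described after (\ref{e:kappa}), where the first identification uses $\<~,~\>$. Alternatively, one may expand $v^i,v^j$ in any basis and check the equality directly, invoking the symmetry of the Gram matrix of $\<~,~\>$.

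The computation is essentially algebraic and routine once the symmetric/antisymmetric decomposition is in place; the only point requiring care is the basis-independence identification of $\kappa_w$, which is where the interplay between the antisymmetric form $a_w$ and the symmetric form $\<~,~\>$ enters most explicitly.
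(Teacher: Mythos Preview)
Your proof is correct. The route differs in organization from the paper's but uses the same ingredients. The paper writes $2\C D^2$ by pairing the two expressions $\C D=\sum_i v_i\otimes v^i=\sum_i v^i\otimes v_i$ so that the duality relation $\langle v^i,v_j\rangle=\delta_{ij}$ can be used directly; after a short manipulation it isolates $\sum_{i,j}[v_i,v^j]\otimes(v^iv_j+\delta_{ij})$ and $-2\bh\otimes 1$, then invokes $\tr a_w=0$ to kill the extra diagonal term. You instead keep a single ordering, split each factor into its symmetric and antisymmetric parts, and observe that the cross terms vanish under $i\leftrightarrow j$; this cleanly separates the $-\bh\otimes 1$ contribution (from the Clifford relation) from the $\kappa_w$ contribution (from the $\bfH$-commutator). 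Your organization is arguably more transparent, at the cost of one extra step: you must identify $\sum_{i,j}a_w(v_i,v_j)v^iv^j$ with $\kappa_w=\sum_{i,j}a_w(v_i,v^j)v^iv_j$, which as you note is just the basis-independence of $\kappa_w$ (or equivalently a Gram-matrix computation). The paper's choice of mixed bases lands directly on its definition of $\kappa_w$, avoiding that last identification.
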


\begin{proof} We compute directly:
\begin{align*}
2\C D^2&=\sum_{i,j}(v_iv^j\otimes v^i v_j+v^jv_i\otimes v_j v^i)\\
&=\sum_{i,j} (v_i v^j\otimes (v^i v_j+\<v^i,v_j\>)+v^j v_i\otimes (v_j v^i+\<v^i,v_j\>))-\sum_{i,j}\<v^i,v_j\>(v_i v^j+v^j v_i)\otimes 1.
\end{align*}
Now use that $\<v^i,v_j\>=\delta_{i,j}$ and that $v_j v^i+\<v^i,v_j\>=-(v^i v_j+\<v^i,v_j\>)$, and continue:
\begin{align*}
2\C D^2&=\sum_{i,j} [v_i,v^j]\otimes (v^i v_j+\delta_{i,j})-\sum_i (v_i v^i+v^i v_i)\\
&=\sum_{i,j}\sum_{w\in W(\ba)} a_w(v_i,v^j) w\otimes v^i v_j+\sum_i \sum_{w\in W(\ba)} a_w(v_i,v^i) w\otimes 1- 2\bh\otimes 1\\
&=-2\bh\otimes 1+\sum_{w\in W(\ba)} w\otimes \kappa_w,
\end{align*}
where we used that $\sum_{i} a_w(v_i, v^i)=\tr a_w=0$, since $a_w$ is skew-symmetric.
\end{proof}

We wish to relate $\kappa_w$ with $p^{-1}(w)\in \wti W$ for $w\in W(\ba)$. Let $w\in W(\ba)\setminus\{1\}$ be given. Then, Proposition \ref{p:ram} implies that the radical of $a_w$ equals $V^w$, and this is a codimension  $2$ subspace of $V.$ We first remark that
\begin{equation}
V=V^w\oplus (V^w)^\perp,
\end{equation}
is an orthogonal decomposition with respect to $\<~,~\>$. Indeed,  $w|_{{V^w}^\perp}$ has determinant $1$ by Proposition \ref{p:ram}, and therefore if one eigenvalue on $(V^w)^\perp$ is $1$, both are, which would give $(V^w)^\perp=0$, a contradiction.  So $V^w\cap (V^w)^\perp=0.$
Since $\<~,~\>$ is non degenerate on $V$, the restriction to $(V^w)^\perp$ must be non degenerate too. This means that if $\{v_i\}$, $\{v^i\}$, $i=1,2$, are dual bases of $(V^w)^\perp$, then 
\[\kappa_w=\sum_{i,j=1}^2a_w(v_i,v^j)v^iv_j,\]
so the calculation of $\kappa_w$ reduces to the two-dimensional case  $SO((V^w)^\perp).$

\begin{lemma}
The element $w\in W(\ba)\setminus \{1\}$ can be written as a product of two reflections $w=s_\al s_\beta$, where $\al,\beta\in (V^w)^\perp$ are linearly independent and $\<\al,\al\>=\<\beta,\beta\>=1$.
\end{lemma}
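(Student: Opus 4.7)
The plan is to reduce to the two-dimensional problem on $(V^w)^\perp$ and exploit that $w$ acts there with determinant $1$. First I would observe that, by the orthogonal decomposition $V=V^w\oplus (V^w)^\perp$ and the fact that $w$ fixes $V^w$ pointwise, it suffices to exhibit $w|_{(V^w)^\perp}$ as a product $s_\al s_\beta$ of two reflections inside $O((V^w)^\perp)$, because any reflection $s_\al$ associated to a nonisotropic vector $\al\in (V^w)^\perp$ acts trivially on $V^w$ and therefore agrees with the corresponding reflection of $V$ defined via $\<~,~\>$.

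Next I would pick the first reflection. Since $\<~,~\>$ is non degenerate on the $2$-dimensional space $(V^w)^\perp$, polarization forces the existence of a vector $\al\in (V^w)^\perp$ with $\<\al,\al\>\ne 0$ (otherwise the form would vanish identically), and after rescaling we may assume $\<\al,\al\>=1$. Applying Proposition \ref{p:ram}(3) to $h=w\in Z_W(w)$ gives $\det(w|_{(V^w)^\perp})=1$, so $s_\al\cdot w\in O((V^w)^\perp)$ has determinant $-1$.

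Then I would use the classical fact that on a two-dimensional complex vector space with a non degenerate symmetric bilinear form, every orthogonal transformation of determinant $-1$ is a reflection $s_\beta$ attached to some vector $\beta$ with $\<\beta,\beta\>\ne 0$; this can be seen by working in a basis (either an orthonormal one or a hyperbolic one) and computing directly. Rescaling $\beta$, we may assume $\<\beta,\beta\>=1$, yielding $w=s_\al s_\beta$.

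Finally, if $\al$ and $\beta$ were linearly dependent, then $s_\al=s_\beta$ and $w=s_\al s_\beta=1$, contradicting $w\in W(\ba)\setminus\{1\}$, so $\al$ and $\beta$ are independent. The main (and really only) nontrivial point is the classical identification of determinant $-1$ elements in $O_2(\bC)$ with reflections; the rest is bookkeeping via the orthogonal splitting $V=V^w\oplus (V^w)^\perp$ that was already established just before the lemma.
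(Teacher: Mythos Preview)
Your argument is correct. Both you and the paper reduce to the two-dimensional space $(V^w)^\perp$, produce a first non-isotropic vector, and then identify $s_\al w$ as a reflection; the difference lies in how that identification is made. The paper chooses $\al$ along $wv-v$ for a fixed non-isotropic $v$, so that $s_\al w$ visibly fixes $v$ and hence must act by $-1$ on the orthogonal line $\bC\beta$; this is constructive and pins down $\beta$ explicitly as a unit vector orthogonal to $v$. You instead take any non-isotropic $\al$, note that $\det(s_\al w|_{(V^w)^\perp})=-1$, and invoke the structure of $O_2(\bC)$ to conclude $s_\al w$ is a reflection. Your route is slightly cleaner and avoids the auxiliary computation that $wv-v$ is non-isotropic, at the cost of citing the classification of determinant $-1$ elements in $O_2(\bC)$; the paper's route is more hands-on and yields the explicit description of $\beta$ (orthogonal to the chosen $v$) that is used in the calculation of $\kappa_w$ immediately afterward.
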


\begin{proof} Since $\<~,~\>$ is non degenerate on $(V^w)^\perp$, there exists $v\in (V^w)^\perp$ such that $\<v,v\>\neq 0$. If $\<wv-v,wv-v\>=0$, it follows by $W$-invariance that $\<wv-v,v\>=0$. Since the form is non degenerate, $wv-v$ and $v$ are not linearly independent, so $v$ is an eigenvector of $w$ with eigenvalue $\lambda$. But then $0=\<wv-v,wv-v\>=(\lambda-1)^2\<v,v\>$, so $\lambda=1$, contradiction.
Set $\al$ to equal a norm $1$ vector in the direction of $wv-v.$ Then $wv=s_\al(v)$, so $s_\al w$ fixes $\bC v.$ To finish let $\beta$ be a norm $1$ vector orthogonal to $v$ (which exists because the form is non degenerate).
\end{proof}

To calculate $\kappa_w$ in terms of $\al$ and $\beta$, choose $u_1=\al$, $u_2=\frac1{\sqrt{1-\<\al,\beta\>^2}}(\beta-\<\al,\beta\>\al)$, which is an orthonormal basis for $(V^w)^\perp.$  (Note that $\<\al,\beta\>\neq 1.$) Then
\begin{equation}
\begin{aligned}
\kappa_w&=2 a_w(u_1,u_2)u_1\cdot u_2=2\frac {a_w(\al,\beta)}{1-\<\al,\beta\>^2} (\al\beta+\<\al,\beta\>).
\end{aligned}
\end{equation}
From (\ref{e:preimage}), $p^{-1}(w)=\{\al\beta,-\al\beta\}\subset \wti W.$  Notice that if $\gamma=s_\al(\beta)$, then
\[w=s_\al s_\beta=s_\gamma s_\al,\text{ but }\al\beta=-\gamma\al\in C(V),\]
and thus what we call $\al\beta\in C(V)$ is only determined up to a sign. Nevertheless, denote
\begin{equation}\label{e:w-tilde}
\wti w=\al\beta,\quad c_{\wti w}=\frac {a_w(\al,\beta)}{1-\<\al,\beta\>^2}, \quad e_w=\frac {a_w(\al,\beta)\<\al,\beta\>}{1-\<\al,\beta\>^2}.
\end{equation}
Then $c_{\wti w}\wti w\in C(V)$ and $e_w\in \bC$ do not depend on the choices of unit vectors $\al,\beta$.

\smallskip

Thus, combining with  Lemma \ref{l:D2} we proved the following formula for $\C D^2$.

\begin{theorem}\label{t:D2}
The square of the Dirac element equals (in $\bfH\otimes C(V)$):
\begin{equation}
\C D^2=-\Omega_\bfH\otimes 1+ 1\otimes \frac 12\kappa_1+\Delta(\Omega_{\wti W,\ba}),
\end{equation}
where 
\begin{align}\label{e:Omega}
&\Omega_{\bfH}=\bh-\sum_{w\in W(\ba)\setminus\{1\}} e_w w\in \bfH^W,\\\label{e:Omega-W}
&\Omega_{\wti W,\ba}=\sum_{w\in W(\ba)\setminus\{1\}} c_{\wti w}\wti w\in \bC[\wti W]^{\wti W},\\\label{e:kappa1}
&\kappa_1=\text{ image in } C^2(V)^W\text{ of }a_1\in (({\bigwedge}^2V)^*)^W.
\end{align}
The elements $\bh$ are defined in (\ref{e:bh}), $\wti w, c_{\wti w}, e_w$ are defined in (\ref{e:preimage}). 
\end{theorem}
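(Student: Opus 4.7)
The plan is to take Lemma \ref{l:D2} as input and reorganize its right-hand side using the two-dimensional reduction of $\kappa_w$ carried out just before the statement. Lemma \ref{l:D2} reads
\begin{equation*}
\C D^2 = -\bh\otimes 1 + \tfrac{1}{2}\cdot 1\otimes \kappa_1 + \tfrac{1}{2}\sum_{w\in W(\ba)\setminus\{1\}} w\otimes \kappa_w,
\end{equation*}
so the $\kappa_1$ term already matches the claimed formula, and only the $w\neq 1$ sum, together with the $-\bh\otimes 1$ term, needs to be repackaged.

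For each $w\in W(\ba)\setminus\{1\}$, the explicit computation preceding the theorem yields $\kappa_w = 2c_{\wti w}\,\alpha\beta + 2e_w\cdot 1 \in C(V)$, where $\wti w = \alpha\beta\in p^{-1}(w)$. Since $\Delta(\wti w) = p(\wti w)\otimes\wti w = w\otimes\wti w$ by definition of $\Delta$, the Clifford part of each summand collects to
\begin{equation*}
\tfrac{1}{2}\sum_{w\neq 1} w\otimes 2c_{\wti w}\wti w \;=\; \sum_{w\neq 1} c_{\wti w}\Delta(\wti w) \;=\; \Delta(\Omega_{\wti W,\ba}),
\end{equation*}
while the scalar residue $\sum_{w\neq 1} e_w w\otimes 1$ combines with $-\bh\otimes 1$ to produce $-\Omega_{\bfH}\otimes 1$. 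The sign ambiguity in the lift $\wti w\in\{\pm\alpha\beta\}$ is absorbed by the coupled definitions in \eqref{e:w-tilde}: both $\wti w$ and $c_{\wti w}$ flip sign simultaneously, so $c_{\wti w}\wti w$ and $e_w$ are canonically defined.

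It remains to verify the invariance assertions. For $\Omega_{\bfH}\in\bfH^W$: the element $\bh$ is already recorded to lie in $\bfH^W$, and Proposition \ref{p:ram}(1) together with $W$-invariance of $\langle\,,\,\rangle$ force $e_w$ to depend only on the $W$-conjugacy class of $w$, so $\sum_{w\neq 1} e_w\, w$ is $W$-invariant. For $\Omega_{\wti W,\ba}\in\bC[\wti W]^{\wti W}$: squaring Lemma \ref{l:D-inv} gives $\Delta(\wti w)\C D^2\Delta(\wti w^{-1}) = \C D^2$ for every $\wti w\in\wti W$; the other two summands on the right-hand side are manifestly $\Delta$-conjugation invariant (the first factor being $W$-invariant, and $\kappa_1\in C^2(V)^W$ being fixed under $\wti W$-conjugation on $C(V)$), so $\Delta(\Omega_{\wti W,\ba})$ is invariant as well, and injectivity of $\Delta$ on $\bC[\wti W]$ transfers the invariance to $\Omega_{\wti W,\ba}$ itself.

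The main obstacle is essentially bookkeeping: once Lemma \ref{l:D2} and the two-dimensional reduction of $\kappa_w$ are in hand, the identity reduces to matching coefficients, and the only conceptual subtlety is ensuring that the sign ambiguity in $\wti w$ does not leak into $\Omega_{\wti W,\ba}$, which the joint normalization in \eqref{e:w-tilde} handles.
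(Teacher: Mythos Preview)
Your derivation of the formula is correct and is exactly the paper's approach: start from Lemma~\ref{l:D2}, isolate the $w=1$ term as $1\otimes\tfrac12\kappa_1$, and for each $w\neq 1$ use the two-dimensional computation $\kappa_w = 2(c_{\wti w}\wti w + e_w)$ to split $\tfrac12\, w\otimes\kappa_w$ into its $\Delta(\Omega_{\wti W,\ba})$ and $\Omega_\bfH$ contributions; the paper's proof is literally the sentence ``combining with Lemma~\ref{l:D2}''.

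One small slip in your added invariance argument: $\Delta$ is \emph{not} injective on $\bC[\wti W]$. If $z\in\wti W$ denotes the nontrivial central element (the scalar $-1$ in $\Pin(V)$), then $1+z$ is a nonzero element of $\bC[\wti W]$ with $\Delta(1+z)=1\otimes 1 + 1\otimes(-1)=0$. What is true is that $\Delta$ is injective on the quotient $\bC[\wti W]/(1+z)$: grouping by $w=p(\wti w)$ and using that $\{w\}_{w\in W}$ is linearly independent in $\bfH$ shows any relation $\sum_w a_w\,(w\otimes\wti w)=0$ forces each $a_w=0$. Since each summand $c_{\wti w}\wti w$ is independent of the choice of lift, $\Omega_{\wti W,\ba}$ is naturally well-defined in this quotient, and your conjugation argument then goes through. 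Alternatively one checks invariance directly from Proposition~\ref{p:ram}(1), which gives $c_{\wti{hwh^{-1}}}\,\wti{hwh^{-1}} = \wti h\,(c_{\wti w}\wti w)\,\wti h^{-1}$.
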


By analogy with the case of real semisimple Lie algebras \cite{Pa} or with the case of graded affine Hecke algebras \cite{BCT}, we may refer to $\Omega_{\bfH}$ and $\Omega_{\wti W,\ba}$ as Casimir elements of $\bfH$ and $\bC[\wti W]$, respectively.

\subsection{The element $\Omega_{\bfH}$} We analyze the element $\Omega_{\bfH}$.
Define the linear map $\mathbf j: V\to V$ by
\begin{equation}\label{e:i-map}
\mathbf j(x)=\sum_i a_1(x,v_i) v^i,\quad x\in V.
\end{equation}

\begin{proposition}\label{p:Omega-central}
The element $\Omega_{\bfH}$ defined in (\ref{e:Omega}) satisfies:
\begin{equation}
[x,\Omega_\bfH]=2 \mathbf j(x),\text{ for all }x\in V.
\end{equation}
In particular, $\Omega_{\bfH}$ is central in $\bfH$ if and only if $1\notin W(\ba)$, i.e., $a_1=0$.
\end{proposition}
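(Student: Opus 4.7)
The plan is a direct commutator calculation, splitting $\Omega_{\bfH}=\bh-\sum_{w\neq 1} e_w w$ into its two pieces. First I would compute $[x,\bh]=\sum_i [x,v_i]v^i+\sum_i v_i[x,v^i]$, expand using $[u,v]=\sum_w a_w(u,v)w$, and then straighten every monomial by pushing group elements to the right via the Drinfeld relation $wv=w(v)w$. After collecting terms grouped by $w\in W(\ba)$, one obtains
\[
[x,\bh]=\sum_{w\in W(\ba)}\Bigl(\sum_i a_w(x,v_i)w(v^i)+\sum_i a_w(x,v^i)v_i\Bigr)w.
\]
Since $\sum_i v_i\otimes v^i$ is independent of the dual bases, $\sum_i a_w(x,v^i)v_i=\sum_i a_w(x,v_i)v^i$, and the $w=1$ summand contributes exactly $2\mathbf{j}(x)$, as required.

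For $w\in W(\ba)\setminus\{1\}$ the task is to match $[x,e_w w]$ with the corresponding summand in $[x,\bh]$. Using $wxw^{-1}=w(x)$, we get $[x,w]=(x-w(x))w$, so after canceling the common factor $w$ on the right it suffices to prove the identity
\[
(\star)\qquad \sum_i a_w(x,v_i)\bigl(w(v^i)+v^i\bigr)=e_w\bigl(x-w(x)\bigr),\quad x\in V.
\]

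For $(\star)$ I would use Proposition \ref{p:ram}: $\ker a_w=V^w$ is of codimension two, so both sides vanish on $V^w$ and we may replace $V$ by the two-dimensional orthogonal complement $(V^w)^\perp$ (the decomposition $V=V^w\oplus(V^w)^\perp$ being orthogonal by the argument already given in the paper). By the lemma following Proposition \ref{p:ram}, $w=s_\al s_\beta$ with $\al,\beta\in(V^w)^\perp$ of norm $1$, and (by Proposition \ref{p:ram}(3) applied to $h=w\in Z_W(w)$) $w$ acts on $(V^w)^\perp$ as a nontrivial rotation. Choosing the orthonormal basis $u_1=\al$, $u_2=(1-\<\al,\beta\>^2)^{-1/2}(\beta-\<\al,\beta\>\al)$ used to derive (\ref{e:w-tilde}), both sides of $(\star)$ become explicit $2\times 2$ expressions in $\<\al,\beta\>$ and $a_w(\al,\beta)$; the identity then reduces to exactly the trigonometric calculation producing $e_w=\frac{a_w(\al,\beta)\<\al,\beta\>}{1-\<\al,\beta\>^2}$. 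This two-dimensional verification is the only real computational step and is essentially what was already carried out in the derivation of (\ref{e:w-tilde}).

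For the final assertion, the operator $\mathbf j$ is, by (\ref{e:i-map}), the endomorphism of $V$ corresponding to $a_1$ under the isomorphism $V^*\cong V$ induced by the non-degenerate form $\<~,~\>$; hence $\mathbf j\equiv 0$ if and only if $a_1=0$. Since $V$ generates $\bfH$ together with $\bC[W]$ and $\Omega_{\bfH}$ is already $W$-invariant, the commutator formula $[x,\Omega_{\bfH}]=2\mathbf j(x)$ shows that $\Omega_{\bfH}$ is central precisely when $a_1=0$, i.e.\ when $1\notin W(\ba)$.
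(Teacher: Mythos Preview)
Your proposal is correct and matches the paper's approach: compute $[x,\bh]$ directly, isolate the $w=1$ contribution as $2\mathbf j(x)$, and for each $w\in W(\ba)\setminus\{1\}$ reduce the required identity to an explicit two-dimensional check on $(V^w)^\perp$ in the orthonormal basis built from $\al,\beta$. One minor caveat: the two-dimensional verification of $(\star)$ is a separate (short) calculation, not literally the one that produced $e_w$ in (\ref{e:w-tilde}); the paper carries it out explicitly for $x=\al$ (and notes that $x=u_2$ is similar), so you should do the same rather than point back to the earlier derivation.
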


\begin{proof}
Let $x\in\bfH$ be arbitrary. On the one hand 
\[[x,\bh]=\sum_{w\in W(\ba)}(\sum_i (a_w(x,v_i)w(v^i)+a_w(x,v^i)v_i)) w,
\]
and on the other
\[ [x,\sum_{w\in W(\ba)\setminus\{1\}}e_w w]=\sum_{w\in W(\ba)\setminus 1}e_w (x-w(x)) w.\]
Comparing the two formulas, we see that $x$ the claim comes down to the verification that for every $w\in W(\ba)\setminus\{1\}$,
\[\sum_i (a_w(x,v_i)w(v^i)+a_w(x,v^i)v_i)=e_w (x-w(x)).\]
If $x\in V^w$, then the right hand side is $0$, and so is the left hand side because $\ker a_w=V^w.$ 

Assume $x\in (V^w)^\perp$. The left hand side comes down to a calculation in $(V^w)^\perp.$ Suppose $v_1,v_2$ is an orthonormal basis of $(V^w)^\perp$, then for $x=v_1$, the relation to check is
\[a_w(v_1,v_2)(v_2+w(v_2))=e_w(v_1-w(v_1)).\]
(Similarly for $x=v_2$.)
Let $w=s_\al s_\beta$ as before, and choose an orthonormal basis of $(V^w)^\perp$ given by $v_1=\al$ and $v_2=\frac 1{\sqrt{1-\<\al,\beta\>^2}}( \beta-\<\al,\beta\>\al).$ Since the definitions are independent of the choice of $\al,\beta$, it is sufficient to check the relation for $x=v_1$. The left hand side becomes
\begin{align*}
a_w(v_1,v_2)(v_2+w(v_2))&=\frac {a_w(\al,\beta)}{1-\<\al,\beta\>^2}(\beta-\<\al,\beta\>\al+s_\al s_\beta(\beta)-\<\al,\beta\> s_\al s_\beta(\al))\\
&=\frac {a_w(\al,\beta)}{1-\<\al,\beta\>^2}(\beta-\<\al,\beta\>\al-\beta+2\<\al,\beta\>\al-\<\al,\beta\> s_\al s_\beta(\al))\\
&=\frac {a_w(\al,\beta)\<\al,\beta\>}{1-\<\al,\beta\>^2}(\al-s_\al s_\beta(\al))=e_w(v_1-w(v_1)).
\end{align*}
\end{proof}

We also notice the following commutation relation in $C(V)$.
\begin{lemma}\label{l:kappa-grading}
For every $x\in V$, $[x,\frac 12\kappa_1]=-2 \mathbf j(x)$ in $C(V)$, where $\mathbf j$ is defined in (\ref{e:i-map}).
\end{lemma}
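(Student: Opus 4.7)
The plan is a direct computation in $C(V)$ using the defining Clifford relation $v \cdot v' + v' \cdot v = -2\<v,v'\>$. The key preliminary observation is the following commutator identity for $x,y_1,y_2\in V$:
\begin{equation*}
[x, y_1 y_2] = 2\<x,y_2\> y_1 - 2\<x,y_1\> y_2,
\end{equation*}
which one verifies by moving $x$ past $y_1$ and then past $y_2$ using the Clifford relation twice (each pass produces a scalar correction, the quadratic term cancels).

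Applying this term by term to $\frac12 \kappa_1 = \frac12 \sum_{i,j} a_1(v_i, v^j) v^i v_j$ yields
\begin{equation*}
[x, \tfrac12 \kappa_1] = \sum_{i,j} a_1(v_i,v^j)\<x,v_j\> v^i - \sum_{i,j} a_1(v_i,v^j)\<x,v^i\> v_j.
\end{equation*}
The first sum I would collapse using the dual basis identity $\sum_j \<x,v_j\> v^j = x$: bringing the $\<x,v_j\>$ inside $a_1(v_i,\cdot)$ gives $\sum_i a_1(v_i,x) v^i$, which equals $-\mathbf{j}(x)$ by skew-symmetry of $a_1$. The second sum I would collapse the other way, using $\sum_i \<x,v^i\> v_i = x$, to get $\sum_j a_1(x,v^j) v_j$.

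It then remains to check that $\sum_j a_1(x,v^j) v_j = \mathbf{j}(x)$. This is a bases-independence remark: both expressions are the unique vector $\xi \in V$ characterized by $\<\xi,y\> = a_1(x,y)$ for all $y\in V$, which one sees by pairing each expression against the appropriate basis and using $\<v_i,v^j\> = \delta_{ij}$. Combining the two contributions gives $[x,\tfrac12 \kappa_1] = -\mathbf{j}(x) - \mathbf{j}(x) = -2\mathbf{j}(x)$, as required.

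The whole argument is routine once the commutator identity $[x,y_1y_2] = 2\<x,y_2\>y_1 - 2\<x,y_1\>y_2$ is in hand; the only place to be careful is the bookkeeping with the two dual bases $\{v_i\}$ and $\{v^i\}$, but there is no conceptual obstacle. Note the symmetry with Proposition~\ref{p:Omega-central}: there the same map $\mathbf{j}$ measured the failure of $\Omega_\bfH$ to be central in $\bfH$ (with $[x,\Omega_\bfH] = 2\mathbf{j}(x)$), and here the opposite sign $-2\mathbf{j}(x)$ appears, which is exactly what is needed so that $\Omega_\bfH\otimes 1 - 1\otimes \tfrac12\kappa_1$ lies in the centralizer of $V$ — a compatibility that will presumably be used in the proof of Vogan's conjecture (Theorem~\ref{t:vogan-conj}).
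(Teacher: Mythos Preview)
Your proof is correct and is essentially the same computation as the paper's, just organized more cleanly: the paper expands $[x,v^iv_j]$ via the derivation identity $[x,v^i]v_j+v^i[x,v_j]$ and then simplifies, arriving at exactly your preliminary identity $[x,y_1y_2]=2\<x,y_2\>y_1-2\<x,y_1\>y_2$, after which both proofs collapse the dual-basis sums in the same way. Your closing remark about the sign compatibility with Proposition~\ref{p:Omega-central} is also on point and is indeed how the lemma gets used (cf.\ Remark~\ref{r:commute-Omega}).
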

\begin{proof}
We calculate
\begin{align*}
[x,\kappa_1]&=\sum_{i,j}a_1(v_i,v^j)([x,v^i]v_j+v^i[x,v_j])\\
&=2\sum_{i,j}a_1(v_i,v^j)(-v^ixv_j-\<x,v^i\>v_j+v^ixv_j+\<x,v_j\>v^i)\\
&=-2\sum_{i,j}a_1(v_i,v^j)\<x,v^i\>v_j+2\sum_{i,j}a_1(v_i,v^j)\<x,v_j\>v^i\\
&=-4\sum_{i,j}a_1(v_i,v^j)\<x,v^i\>v_j,\text{ because of the skew-symmetry of }a_w\\
&=-4\sum_j a_1(\sum_i \<x,v^i\>v_i,v^j) v_j=-4\sum_j a_1(x,v^j)v_j.
\end{align*}
\end{proof}

\subsection{Example} The motivating example for this generalization is the graded affine Hecke algebra $\bH$ introduced by Lusztig \cite{Lu}. Let $W$ be a finite Coxeter group acting on a Euclidean space $V_0$ and take $V=V_0\otimes_\bR \bC$. Let $\Phi\subset V_0^*$ denote a root system with $\Phi^+$ a choice of positive roots and let $k:\Phi\to\bC$ be a $W$-invariant function. The skew symmetric forms are given by the formula (see \cite{RS}):
\begin{equation}
a_w(u,v)=-\sum_{\al,\beta\in \Phi^+,~w=s_\al s_\beta} k_\al k_\beta (\al(u)\beta(v)-\al(v)\beta(u)).
\end{equation}
Therefore $a_w=0$ unless $w$ is the product of two distinct reflections. In particular, $1\notin W(\fa)$ and $\kappa_1=0$. Theorem \ref{t:D2} specializes to the formula for $\C D^2$ from \cite[Theorem 3.5]{BCT}:
\begin{equation}
\begin{aligned}
\Omega_\bH&=\sum_{i}v_i v^i+\sum_{\al,\beta\in\Phi^+} k_\al k_\beta \langle\al^\vee,\beta^\vee\rangle s_\al s_\beta \in Z(\bH);\\
\Omega_{\wti W,\ba}&=-\sum_{\al,\beta\in\Phi^+} |\al^\vee||\beta^\vee| \wti s_\al \wti s_\beta\in Z(\bC[\wti W]), \text{ where }\wti s_\al=\frac 1{|\al^\vee|}\al^\vee\in \Pin(V).\\
\end{aligned}
\end{equation}
 The representation theory of the pin cover $\wti W$ of the Weyl group that appears for the graded affine Hecke algebra was studied in relation with the Dirac operator and the geometry of the nilpotent cone in \cite{Ci,CH,CT,Cha}.

\smallskip

In later sections, we will discuss the case of rational Cherednik algebras, which form a different class of Drinfeld Hecke algebras.

\section{Vogan's Dirac morphism}

In this section, we construct an algebra homomorphism analogous to that in Vogan's conjecture in Dirac cohomology for $(\fg,K)$-modules, proved by Huang-Pand\v zi\'c \cite{HP}. In the setting of Lusztig's graded Hecke algebra, this was formulated and proved in \cite{BCT}.

\subsection{The linear map $d$} Recall the $\bZ/2\bZ$-grading of $C(V)$ by $C(V)=C(V)_0\oplus C(V)_1$ and the algebra automorphism  $\ep$  of $C(V)$ that equals $\Id$ on $C(V)_0$ and $-\Id$ on $C(V)_1$, extended to an automorphism of $\bfH\otimes C(V)$ by making it the identity on $\bfH.$ Define the linear map
\begin{equation}
d: \bfH\otimes C(V)\to \bfH\otimes C(V),
\end{equation}
by setting
\begin{equation}
d(a)=\C D a-\ep(a)\C D,\ a\in \bfH\otimes C(V).
\end{equation}

\begin{lemma}\label{l:der}
The map $d$ is an odd derivation, i.e.,
\begin{equation}
d(ab)=d(a)b+\ep(a) d(b),\quad a,b\in \bfH\otimes C(V).
\end{equation}
\end{lemma}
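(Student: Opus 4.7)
The proof is a direct, one-line verification from the definition, but it is worth spelling out how the signs collapse. My plan is to expand both sides using only the definition $d(a) = \C D a - \ep(a)\C D$ and the fact that $\ep$ is an algebra automorphism of $\bfH \otimes C(V)$.

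First I would compute the left-hand side: since $\ep$ is multiplicative,
\[
d(ab) = \C D\,ab - \ep(ab)\,\C D = \C D\,ab - \ep(a)\ep(b)\,\C D.
\]
Then I would expand the right-hand side directly from the definition:
\[
d(a)b + \ep(a)d(b) = \bigl(\C D a - \ep(a)\C D\bigr)b + \ep(a)\bigl(\C D b - \ep(b)\C D\bigr).
\]
The middle two terms $-\ep(a)\C D b$ and $+\ep(a)\C D b$ cancel, leaving exactly $\C D\,ab - \ep(a)\ep(b)\,\C D$, which matches the left-hand side.

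There is no real obstacle here; the only thing to note is that we never have to commute $\C D$ past anything or use any parity information about $\C D$ itself. The odd-derivation identity is essentially built into the definition of $d$, together with the multiplicativity of $\ep$. By linearity, it suffices to have verified the identity for homogeneous $a,b$, but since the calculation is purely algebraic and uses no grading assumption on $a,b$, it is valid in full generality on $\bfH \otimes C(V)$.
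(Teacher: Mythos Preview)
Your proof is correct and is essentially identical to the paper's own argument: both expand $d(ab)=\C D ab-\ep(ab)\C D$ using multiplicativity of $\ep$, and observe that the cross terms $\pm\ep(a)\C D b$ cancel. The paper's version is just the one-line condensed form of what you wrote out.
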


\begin{proof}
We verify the claim directly from the definition:
\begin{align*}
d(ab)&=\C D ab-\ep(ab)\C D=(\C D a -\ep(a) \C D) b + \ep(a) (\C D b -\ep(b)\C D)\\
&=d(a)b+\ep(a) d(b).
\end{align*}
\end{proof}

\begin{lemma}\label{l:W-ker}
$\Delta(\bC[\wti W])\subset \ker d.$
\end{lemma}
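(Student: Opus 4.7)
The plan is to prove the stronger pointwise claim: for every $\wti w \in \wti W$, $d(\Delta(\wti w)) = 0$, which since $\Delta$ is linear gives the result for all of $\bC[\wti W]$. Unwinding the definition, this reduces to showing
\[
\C D \, \Delta(\wti w) = \ep(\Delta(\wti w)) \, \C D.
\]

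The two key ingredients are already established. First, Lemma~\ref{l:D-inv} gives $\Delta(\wti w)\C D\Delta(\wti w^{-1})=\det(p(\wti w))\C D$, equivalently
\[
\C D \, \Delta(\wti w) = \det(p(\wti w)) \, \Delta(\wti w) \, \C D,
\]
using that $\det(p(\wti w))=\pm 1$. Second, to compute $\ep(\Delta(\wti w))$, recall $\Delta(\wti w)=p(\wti w)\otimes \wti w$ and that $\ep$ is the identity on the $\bfH$-factor, so $\ep(\Delta(\wti w))=p(\wti w)\otimes \ep(\wti w)$. By Lemma~\ref{l:det} applied to $\wti w\in\wti W\subset \Pin(V)$, one has $\ep(\wti w)=\det(p(\wti w))\,\wti w$, and hence
\[
\ep(\Delta(\wti w)) = \det(p(\wti w)) \, \Delta(\wti w).
\]

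Combining the two displays shows $\ep(\Delta(\wti w))\C D = \det(p(\wti w))\Delta(\wti w)\C D = \C D\,\Delta(\wti w)$, which is exactly what we needed. Hence $d(\Delta(\wti w))=0$ for each $\wti w$, and extending $\bC$-linearly gives $\Delta(\bC[\wti W])\subset \ker d$.

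There is no real obstacle here: the lemma is just the packaging of Lemma~\ref{l:D-inv} (the ``classical'' $\wti W$-invariance of the Dirac element up to the determinant character) together with Lemma~\ref{l:det} (which relates the Clifford parity automorphism $\ep$ to this same determinant character on $\Pin(V)$). The only mildly subtle point is remembering that $\ep$ acts on $\bfH\otimes C(V)$ only through the Clifford factor, so that $\ep(\Delta(\wti w))$ reduces to applying $\ep$ to $\wti w\in C(V)$.
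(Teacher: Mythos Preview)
Your proof is correct and follows essentially the same approach as the paper: both use Lemma~\ref{l:D-inv} together with Lemma~\ref{l:det} to verify $\C D\,\Delta(\wti w)=\ep(\Delta(\wti w))\,\C D$ for each $\wti w\in\wti W$, then extend linearly. Your write-up is simply more explicit than the paper's one-line version.
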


\begin{proof}
For every $\wti w\in \wti W$, by Lemma \ref{l:D-inv}, $\Delta(\wti w)\C D=\C D \ep(\wti w)$ in $\bfH\otimes C(V)$. The claim then follows.
\end{proof}

In the notation of Theorem \ref{t:D2}, define 
\begin{equation}
\wti \Omega_\bfH=\Omega_\bfH\otimes 1-1\otimes \frac 12\kappa_1\in (\bfH\otimes C(V))^{\wti W}.
\end{equation}
Define the subalgebra:
\begin{equation}\label{e:alg-A}
\bfA=Z_{\bfH\otimes C(V)}(\wti \Omega_{\bfH}).
\end{equation}
\begin{remark}If $1\notin W(\ba)$, then $\bfA=\bfH\otimes C(V).$
\end{remark}
In general, since $\wti\Omega_{\bfH}$ is $\wti W$-invariant, we have $\Delta(\bC[\wti W])\subset \bfA.$ We also have $\C D\in\bfA$ because of Theorem \ref{t:D2}. Notice that $\C D$ interchanges the trivial and $\det$ $\wti W$-isotopic spaces. Denote by $\bfA^{\wti W,\det}$ the $\det$-isotypic subspace. Restrict the map $d$ to
\begin{equation}
d_\triv: \bfA^{\wti W}\to \bfA^{\wti W,\det},\quad d_{\det}: \bfA^{\wti W,\det}\to \bfA^{\wti W}.
\end{equation}
Of course $d_\triv, \ d_{\det}$ are also an odd derivations. Then Lemma \ref{l:W-ker} implies:
\begin{equation}\label{e:W-ker}
\Delta(\bC[\wti W]^\wti W)\subset \ker d_\triv.
\end{equation}

The reason to restrict to $\bfA^{\wti W}$ is because of the following lemma.

\begin{lemma}\label{l:d2=0}
$d_\triv d_{\det}=d_{\det} d_\triv=0.$
\end{lemma}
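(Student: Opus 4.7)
The plan is to reduce both compositions $d_\triv d_\det$ and $d_\det d_\triv$ to the single identity $d^2(a) = [\C D^2, a]$ on all of $\bfH \otimes C(V)$, and then show the right-hand side vanishes whenever $a \in \bfA^{\wti W}$ or $a\in \bfA^{\wti W,\det}$, using Theorem \ref{t:D2}.

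First, record the sign observation $\ep(\C D) = -\C D$: this holds because $\C D = \sum_i v_i \otimes v^i$ lies in $\bfH \otimes C(V)_1$ and $\ep$ is the identity on $\bfH$ while acting as $-\Id$ on $C(V)_1$. Using this together with $\ep^2 = \Id$, expanding
\[
d^2(a) = \C D\bigl(\C D\, a - \ep(a)\,\C D\bigr) - \ep\bigl(\C D\, a - \ep(a)\,\C D\bigr)\C D
\]
makes the two mixed terms $\C D\,\ep(a)\,\C D$ cancel, leaving $d^2(a) = \C D^2 a - a\,\C D^2 = [\C D^2, a]$. This is a direct derivation-style verification; the sign bookkeeping in this step is the only place that really requires care.

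Next, invoke Theorem \ref{t:D2} in the rearranged form $\C D^2 = -\wti\Omega_\bfH + \Delta(\Omega_{\wti W,\ba})$. For any $a \in \bfA$, the commutator $[\wti\Omega_\bfH, a]$ vanishes by the very definition (\ref{e:alg-A}) of $\bfA$, so the problem reduces to showing $[\Delta(\Omega_{\wti W,\ba}), a] = 0$ whenever $a$ lies in $\bfA^{\wti W}$ or $\bfA^{\wti W,\det}$.

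The last ingredient is that every $\wti w$ appearing in the sum defining $\Omega_{\wti W,\ba}$ in (\ref{e:Omega-W}) projects to an element $w \in W(\ba)\setminus\{1\}$, which by the lemma preceding Theorem \ref{t:D2} is a product $s_\al s_\beta$ of two reflections; hence $\det(p(\wti w)) = (-1)(-1) = 1$. For $a$ transforming by either the trivial character or $\det$ under $\Delta(\wti W)$-conjugation, this forces $\Delta(\wti w)\, a\, \Delta(\wti w)^{-1} = a$ for each such $\wti w$, i.e.\ $\Delta(\wti w)$ commutes with $a$. Summing against the coefficients $c_{\wti w}$ yields $[\Delta(\Omega_{\wti W,\ba}), a] = 0$, which completes the argument. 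The expected obstacle is therefore only the $\ep$-twisted sign cancellation in step one; once $d^2 = [\C D^2,\cdot]$ is in hand, the conclusion is an immediate consequence of Theorem \ref{t:D2} together with the centrality observations above.
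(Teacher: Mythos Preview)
Your proof is correct and follows essentially the same approach as the paper: compute $d^2(a)=[\C D^2,a]$, then use Theorem~\ref{t:D2} to write $\C D^2=-\wti\Omega_\bfH+\Delta(\Omega_{\wti W,\ba})$ and observe that $a$ commutes with both summands. Your write-up is in fact more careful than the paper's in two places: you spell out the $\ep$-sign cancellation in verifying $d^2(a)=[\C D^2,a]$, and you make explicit why elements in the $\det$-isotypic component commute with $\Delta(\Omega_{\wti W,\ba})$ (namely, each $w\in W(\ba)\setminus\{1\}$ is a product of two reflections, hence $\det(p(\wti w))=1$), a point the paper leaves implicit.
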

\begin{proof}
We have $d^2(a)=\C D^2 a-a\C D^2$ for all $a\in \bfH\otimes C(V).$ If $a\in \bfA^{\wti W}$ or $a\in \bfA^{\wti W,\det}$, then $a$ commutes with both $\Omega_\bfH$ and $\Delta(\Omega_{\wti W,\ba})$, and therefore, by Theorem \ref{t:D2}, with $\C D^2.$
\end{proof}

The main result is a simple description of $\ker d_\triv$.

\begin{theorem}\label{t:ker-dW} The kernel of $d_\triv$ equals:
$$\ker d_\triv=\im d_{\det}\oplus \Delta(\bC[\wti W]^{\wti W}).$$
\end{theorem}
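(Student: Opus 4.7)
The inclusion $\supseteq$ is immediate: by Lemma~\ref{l:d2=0} we have $d_\triv \circ d_\det = 0$, so $\im d_\det \subseteq \ker d_\triv$, while (\ref{e:W-ker}) restricted to $\wti W$-invariants gives $\Delta(\bC[\wti W]^{\wti W}) \subseteq \ker d_\triv$.

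For the reverse inclusion and the directness of the sum, my plan is to pass to the associated graded via the PBW-type filtration on $\bfH \otimes C(V)$ in which $V \subset \bfH$ has degree $1$, $V \subset C(V)$ has degree $1$, and $\bC[W]$ has degree $0$. The associated graded is $S(V) \otimes \bigwedge V \rtimes \bC[W]$, with the symbol of $\C D$ being $\sum_i v_i \otimes v^i$ of total degree $2$. A direct super-commutator calculation shows that the leading symbol of $d$ acts on the $w$-component of the graded algebra by
\[
\ovl d\,(fw \otimes \omega) \;=\; \sum_i f\,(v_i - w(v_i))\,w \,\otimes\, v^i \wedge \omega,
\]
which vanishes identically on the $w = 1$ component and, for $w \neq 1$, decouples via the orthogonal splitting $V = V^w \oplus (V^w)^\perp$ into a standard co-Koszul differential on $S((V^w)^\perp) \otimes \bigwedge (V^w)^\perp$ tensored with the identity on $S(V^w) \otimes \bigwedge V^w$. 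Koszul acyclicity then concentrates the cohomology of $\ovl d$ on the $w \neq 1$ component in the top $\bigwedge$-degree of $(V^w)^\perp$.

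Descending to the sub-leading symbol of $d$ on the $w = 1$ component gives a contraction-type Koszul differential on $S(V) \otimes C(V)$ whose cohomology is one-dimensional in bi-degree zero. Assembling the contributions across $W$-components and taking $\wti W$-invariants in the trivial isotypic type, I intend to match the graded cohomology with the symbols of $\Delta(\bC[\wti W]^{\wti W})$: for each $w \in W(\ba) \setminus \{1\}$, Proposition~\ref{p:ram} forces $\dim(V^w)^\perp = 2$, and the symbol of $\Delta(\wti w) = w \otimes \al\beta$ is $w \otimes \al \wedge \beta \in \bC[W] \otimes \bigwedge^2(V^w)^\perp$, precisely in the top Koszul degree, while the $w = 1$ contribution matches $\Delta(\pm 1) = \pm 1 \otimes 1$. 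A standard induction on the filtration degree, lifting graded cocycles to honest cocycles modulo $\im d_\det$ (with Lemma~\ref{l:d2=0} guaranteeing that $d$-squares vanish), will then yield both the reverse inclusion and the directness of the sum.

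The main technical hurdle will be the careful bookkeeping of the $\wti W$-equivariance across $W$-components and filtration levels, and ensuring that the centralizer condition defining $\bfA$ combined with the restriction to the trivial $\wti W$-isotype leaves precisely the symbols of $\Delta(\bC[\wti W]^{\wti W})$ surviving in the graded cohomology, with no extra classes from $w \notin W(\ba)$ or from higher $\bigwedge$-degrees.
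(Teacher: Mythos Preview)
Your approach has a genuine gap, and it stems from the choice of filtration. By assigning degree $1$ to $V\subset C(V)$ as well as to $V\subset\bfH$, you force yourself into a two-step spectral sequence argument whose first page you have miscomputed. The leading symbol you wrote down,
\[
\ovl d\,(fw\otimes\omega)=\sum_i f\,(v_i-w(v_i))\,w\otimes v^i\wedge\omega,
\]
is a differential that \emph{raises} the $\bigwedge$-degree; on $S((V^w)^\perp)\otimes\bigwedge(V^w)^\perp$ its cohomology is the one-dimensional space sitting in bidegree $(0,0)$, not in the top $\bigwedge$-degree. Tensoring with the identity on $S(V^w)\otimes\bigwedge V^w$ therefore leaves, for \emph{every} $w\neq 1$ (not only $w\in W(\ba)$), a surviving cohomology space isomorphic to $S(V^w)\otimes\bigwedge V^w$. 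This is vastly larger than the symbol of $\Delta(\wti w)$, and it does not vanish upon restricting to $\bfA$ or to the trivial $\wti W$-isotype. The ``main technical hurdle'' you flag at the end is thus not bookkeeping but an actual obstruction: the first page of your spectral sequence does not look like $\bC[\wti W]$, and the subsequent differentials (mixing $w$-components via the relations $[u,v]=\sum_w a_w(u,v)w$ and the Clifford relations simultaneously) are not of a shape that any standard acyclicity argument will handle.

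The paper avoids all of this by filtering only $\bfH$ and leaving $C(V)$ unfiltered. The associated graded is then $\bfH_0\otimes C(V)=(S(V)\rtimes\bC[W])\otimes C(V)$, on which $\bar d$ is given by the \emph{same} formula $a\mapsto\C D a-\ep(a)\C D$ and satisfies $\bar d^2=0$ exactly, since $\Omega_{\bfH_0}$ is central. The key move is then not to decompose by $w$-component, but to factor any simple tensor as $wf\otimes g=\Delta(\wti w)\cdot(f\otimes g')$ with $f\in S(V)$, $g'\in C(V)$; since $\Delta(\wti w)\in\ker\bar d$, the derivation property reduces the computation of $\ker\bar d$ to the restriction $\bar d'$ on $S(V)\otimes C(V)$, which is the classical Koszul differential with $\ker\bar d'=\im\bar d'\oplus\bC$. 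This immediately gives $\ker\bar d=\im\bar d\oplus\bar\Delta(\bC[\wti W])$ on the associated graded, and a straightforward induction on filtration degree (as you outlined at the end) then finishes the proof.
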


The proof will be obtained inductively from a reduction to the associated graded algebra. Recall the filtration on $\bfH$, obtained by giving degree $0$ to $\bC[W]$ and degree $1$ to the elements in $V$:
  $\bfH^0\subset \bfH^1\subset\dots\subset \bfH^n\subset\dots$. Let $\gr(\bfH)$ be the associated graded algebra. The PBW assumption on $\bfH$ means that
\begin{equation}\label{e:assoc-graded}
\gr(\bfH)\cong \bfH_{0}=S(V)\rtimes \bC[W].
\end{equation}
We consider the induced filtration on $\bfH\otimes C(V)$, in other words, the filtration is $(\bfH^n\otimes C(V))$. The associated graded object is then $\bfH_0\otimes C(V).$

It is important to observe that, by Proposition \ref{p:Omega-central}, commutation with $\wti \Omega_\bfH$ acts on the subspaces $\bfH^n\otimes C(V)$. This is because $[\Omega_\bfH,V]\subset V$ and $\Omega_\bfH$ is $W$-invariant,  therefore 
\begin{equation}
[\Omega_\bfH, \bfH^n\otimes C(V)]\subset \bfH^n\otimes C(V).
\end{equation}
Define the filtration on $\bfA$ by $\bfA^n=\bfA\cap \bfH^n\otimes C(V)$, and let $\gr(\bfA)\subset  \bfH_{0}$ be the associated graded object.

Notice that $d$ preserves the filtration $\bfH^n\otimes C(V),$ in fact, $d(\bfH^n\otimes C(V))\subseteq \bfH^{n+1}\otimes C(V).$ Denote the resulting maps by:
\begin{equation}
\bar d: \gr(\bfH\otimes C(V))\to \gr(\bfH\otimes C(V)),\quad \bar d_\triv: \gr(\bfA^{\wti W})\to \gr(\bfA^{\wti W,\det}),
\end{equation}
and similarly $\bar d_{\det}$.

\subsection{Semidirect products} One analyzes first the maps on the associated graded objects. Given (\ref{e:assoc-graded}), we are in the situation of $\bfH_{0}=S(V)\rtimes \bC[W]$ with the linear map $\bar d: \bfH_{0}\otimes C(V)\to \bfH_{0}\otimes C(V)$.  The following results, which generalize to semi direct products the case of a symmetric algebra from \cite{HP}, are proved in \cite{BCT} in the setting of the graded affine Hecke algebra. The statements and and the proofs, based on the cohomology of the usual Koszul complex, apply to this setting. We present and simplify the arguments here for the benefit of the reader.

\begin{lemma}
The map $\bar d:\bfH_0\otimes C(V)\to \bfH_0\otimes C(V)$ has the following properties:
\begin{enumerate}
\item[(i)] $\bar d$ is an odd derivation;
\item[(ii)] $\bar d^2=0$;
\item[(iii)] $\bar\Delta(\bC[\wti W])\subset \ker\bar d$;
\item[(iv)] $\im\bar d\cap \bar\Delta(\bC[\wti W])=0.$
\end{enumerate} 
\end{lemma}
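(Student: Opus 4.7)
The plan is to derive all four properties directly from the corresponding facts about $d$ on $\bfH\otimes C(V)$, together with filtration bookkeeping. For (i), I would invoke Lemma \ref{l:der} (which says $d$ is an odd derivation) and note that the filtration $(\bfH^n\otimes C(V))$ is compatible with multiplication while $d$ shifts filtration degree by exactly $+1$; these two facts together force the odd Leibniz rule to descend to the associated graded map $\bar d$.

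For (ii), I would first record the elementary identity $d^2(a)=[\C D^2,a]$, which follows from $\ep(\C D)=-\C D$ and a short direct manipulation of the definition of $d$. Then, using Theorem \ref{t:D2}, I would inspect the top filtration component of $\C D^2$: writing $\Omega_\bfH=\bh-\sum_{w\neq 1}e_w w$ and noting that $\bh$ has filtration degree $2$ while the remaining pieces $\kappa_1$, $\Delta(\Omega_{\wti W,\ba})$, and $\sum_{w\neq 1}e_w w$ all sit in filtration degree $\le 0$, the symbol of $\C D^2$ in $\gr^2(\bfH\otimes C(V))=\bfH_0\otimes C(V)$ is $-\bh\otimes 1$. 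Since $\bh=\sum_i v_iv^i$ lies in $S(V)^W$, it is central in $\bfH_0=S(V)\rtimes\bC[W]$; the induced commutator therefore vanishes and $\bar d^2=0$.

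For (iii), Lemma \ref{l:W-ker} gives $d(\Delta(\wti w))=0$ for every $\wti w\in\wti W$; since $\Delta(\wti w)=p(\wti w)\otimes\wti w$ has filtration degree $0$, its symbol $\bar\Delta(\wti w)$ lies in $\gr^0$ and $\bar d(\bar\Delta(\wti w))$ is simply the degree $1$ part of $0$, hence vanishes. Property (iv) will then follow from a pure grading observation: the associated graded algebra $\bfH_0\otimes C(V)=S(V)\otimes\bC[W]\otimes C(V)$ is graded by $S(V)$-degree, the map $\bar d$ strictly raises this grading by $1$ so that $\im\bar d\subset\bigoplus_{n\ge 1}\gr^n$, whereas $\bar\Delta(\bC[\wti W])$ sits entirely in $\gr^0$; the intersection must therefore be zero.

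Among the four items only (ii) carries any real content, and its essential point is the identification of the symbol of $\C D^2$ with a central element of $\bfH_0$; this in turn is secured by the elementary remark that $\bh\in S(V)^W$ is central in $S(V)\rtimes\bC[W]$. I expect this centrality check to be the only step requiring any thought — the other three properties amount to bookkeeping about how $d$ and $\Delta$ interact with the filtration on $\bfH\otimes C(V)$.
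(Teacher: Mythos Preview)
Your proposal is correct and follows essentially the same line as the paper. The only cosmetic difference is in (ii): the paper computes $\bar{\C D}^2$ directly inside $\bfH_0\otimes C(V)$ (where all the forms $a_w$ vanish, so $\bar{\C D}^2=-\Omega_{\bfH_0}\otimes 1$) and then invokes Proposition~\ref{p:Omega-central} (with $a_1=0$) for centrality, whereas you obtain the same element $-\bh\otimes 1$ as the degree-$2$ symbol of $\C D^2$ and observe directly that $\bh\in S(V)^W$ is central in $S(V)\rtimes\bC[W]$; these are the same argument in slightly different packaging.
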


\begin{proof} (i) follows from Lemma \ref{l:der}. For (ii), notice that $\bar d^2(a)=[\bar D^2,a]=[-\Omega_{\bfH_0}\otimes 1,a]=0$, since $\Omega_{\bfH_0}\in Z(\bfH_0)$ by Proposition \ref{p:Omega-central}.  Part (iii) follows from Lemma \ref{l:W-ker}. For (iv), notice that $$\bar d(\bfH_0^n\otimes C(V))\subset \bfH_0^{n+1}\otimes C(V),$$
and therefore $\im\bar d\subseteq \bigoplus_{n\ge 1} \bfH_0^n\otimes C(V).$ 
\end{proof}

Thus, we have $\im\bar d\oplus \bar\Delta(\bC[\wti W])\subseteq \ker \bar d.$
The proof of the following proposition in \cite{BCT} giving the opposite inclusion is based on the following fact.
The subalgebra $S(V)\otimes C(V)\subset \bfH_0\otimes C(V)$ is preserved by $\bar d$. Denote the restriction of $\bar d$ to this subalgebra by $\bar d'$. By \cite[Lemma 4.1]{HP}, $\bar d'$ is essentially the differential in the classical Koszul resolution and therefore 
\begin{equation}\label{e:koszul}
\ker \bar d'=\im \bar d'\oplus \bC.
\end{equation}

\begin{proposition}[cf. {\cite[Proposition 4.14]{BCT}}]\label{p:BCT} We have  $\ker \bar d=\im\bar d\oplus \bar\Delta(\bC[\wti W]).$
\end{proposition}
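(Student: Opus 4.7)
The plan is to reduce to the Koszul-type identity (\ref{e:koszul}) for $S(V)\otimes C(V)$ by exploiting a PBW-style decomposition of $\bfH_0\otimes C(V)$ indexed by $W$, applied termwise. We only need the inclusion $\ker\bar d\subseteq \im\bar d\oplus\bar\Delta(\bC[\wti W])$, since the opposite inclusion was already established and the sum is direct by part (iv) of the preceding lemma.

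First I would fix, for each $w\in W$, a lift $\wti w\in p^{-1}(w)\subset\Pin(V)$, and observe that $\bar\Delta(\wti w)=(w\otimes 1)(1\otimes \wti w)$. Because $\wti w$ is a unit in $C(V)$ and $w\otimes 1$ is a unit in $\bfH_0$, right multiplication by $\bar\Delta(\wti w)$ is a bijection $S(V)\otimes C(V)\xrightarrow{\sim} (S(V)\otimes C(V))\cdot\bar\Delta(\wti w)$, and the PBW theorem for $\bfH_0$ yields
\begin{equation}
\bfH_0\otimes C(V)\;=\;\bigoplus_{w\in W}\bigl(S(V)\otimes C(V)\bigr)\cdot\bar\Delta(\wti w).
\end{equation}
Write an arbitrary $a\in\ker\bar d$ uniquely as $a=\sum_{w\in W}b_w\cdot\bar\Delta(\wti w)$ with $b_w\in S(V)\otimes C(V)$.

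Next I would exploit the fact that $\bar d$ is an odd derivation (Lemma \ref{l:der}) that annihilates $\bar\Delta(\bC[\wti W])$ (Lemma \ref{l:W-ker}). The Leibniz rule gives
\begin{equation}
\bar d(a)=\sum_{w\in W}\bar d(b_w)\,\bar\Delta(\wti w)+\sum_{w\in W}\ep(b_w)\,\bar d(\bar\Delta(\wti w))=\sum_{w\in W}\bar d(b_w)\,\bar\Delta(\wti w).
\end{equation}
Since $b_w\in S(V)\otimes C(V)$ and $\bar{\C D}\in S(V)\otimes C(V)$, the element $\bar d(b_w)=\bar{\C D}b_w-\ep(b_w)\bar{\C D}$ remains in $S(V)\otimes C(V)$ and coincides with $\bar d'(b_w)$. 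The directness of the displayed decomposition then forces $\bar d'(b_w)=0$ for every $w\in W$.

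Finally I would invoke the Koszul identity (\ref{e:koszul}): for each $w$, write $b_w=\bar d'(c_w)+\lambda_w$ with $c_w\in S(V)\otimes C(V)$ and $\lambda_w\in\bC$. Reassembling, and using Leibniz and $\bar d(\bar\Delta(\wti w))=0$ once more,
\begin{equation}
a=\sum_{w\in W}\bar d'(c_w)\,\bar\Delta(\wti w)+\sum_{w\in W}\lambda_w\,\bar\Delta(\wti w)=\bar d\Bigl(\sum_{w\in W}c_w\,\bar\Delta(\wti w)\Bigr)+\bar\Delta\Bigl(\sum_{w\in W}\lambda_w\,\wti w\Bigr),
\end{equation}
exhibiting $a$ as an element of $\im\bar d+\bar\Delta(\bC[\wti W])$, as required. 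The main obstacle is really conceptual rather than technical: one must ensure that the right-multiplication decomposition by $\bar\Delta(\wti w)$ (as opposed to by $w\otimes 1$) is compatible with the derivation $\bar d$, and this is precisely what makes the two summands decouple so cleanly, since $\bar\Delta(\wti w)\in\ker\bar d$ and the Clifford automorphism $\ep$ acts trivially on the $S(V)$-factor.
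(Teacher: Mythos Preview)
Your proof is correct and follows essentially the same strategy as the paper: decompose $\bfH_0\otimes C(V)$ into $W$-indexed pieces via multiplication by $\bar\Delta(\wti w)$, use that $\bar d$ is an odd derivation killing $\bar\Delta(\bC[\wti W])$ to reduce to the restriction $\bar d'$ on $S(V)\otimes C(V)$, and then invoke the Koszul identity (\ref{e:koszul}). The only cosmetic difference is that you factor on the right by $\bar\Delta(\wti w)$ whereas the paper factors on the left, which in your convention avoids the stray $\det(p(\wti w))$ sign and makes the bookkeeping a bit cleaner.
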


\begin{proof}
Let $wf\otimes g$ be a simple tensor in $\bfH_0\otimes C(V)$, where $f\in S(V)$ and $g\in C(V).$ Let $\wti w\in \wti W$ be such that $p(\wti w)=w$ and denote $g'=\wti w^{-1}\cdot g\in C(V)$. The derivation property of $\bar d$ implies
\begin{equation}
\bar d(wf\otimes g)=\bar d((p(\wti w)\otimes \wti w)(f\otimes \wti g')=d(p(\wti w)\otimes \wti w) (f\otimes g')+\det(p(\wti w)) d(f\otimes g')=d'(f\otimes g''),
\end{equation}
where $g''=\det(p(\wti w)) g'$. The claim follows then from the formula for $\ker \bar d'$.
\end{proof}
Restricting to $\wti W$-invariants, Proposition \ref{p:BCT} also gives:
\begin{equation}\label{dbar-inv}
\ker \bar d_\triv=\im\bar d_{\det}\oplus \bar\Delta(\bC[\wti W]^{\wti W}).
\end{equation}

\subsection{Induction}\label{s:induction} The proof of Theorem \ref{t:ker-dW} follows by a well-known induction using the filtration on $\bfA^{\wti W}.$ The argument in the setting of graded affine Hecke algebra is presented in \cite[\S3.3]{COT}  and can be applied verbatim to this setting. We need to record it however because it will be used to prove later a refinement in the case of rational Cherednik algebras at $t=0$.

By Lemma \ref{l:d2=0} and (\ref{e:W-ker}), we have $\Delta(\bC[\wti W]^{\wti W})+\im d_{\det}\subseteq \ker d_\triv.$ In fact, the sum in the left hand side is direct. To see this, notice that for every $a\in \im d_{\det}$, the term of top degree in $a$ with respect to the filtration  ($\bfH^n\otimes C(V))$ can be regarded as an element in $\im \bar d_{\det}$ with the same degree. In particular, if $a$ is in $\Delta(\bC[\wti W]^{\wti W})\cap\im d_{\det}$, then $a$ itself (having degree $0$) can be regarded as an element of $\Delta(\bC[\wti W]^{\wti W})\cap\im \bar d_{\det}$. Thus $a=0$ by (\ref{dbar-inv}).

We need to prove the inclusion $\ker d_\triv\subseteq \im d_{\det}\oplus \Delta(\bC[\wti W]^{\wti W}).$ Let $a\in \ker d_\triv$ be given and we may assume that $a\in \bfA^n.$ Taking graded objects, $\bar d_\triv(\bar a)=0$ in $\gr(\bfA).$ By (\ref{dbar-inv}), there exists $\bar b \in (\gr(\bfA)^{n-1})^{\wti W,\det}$ and $s\in\bC[\wti W]^{\wti W}$ such that 
\[\bar a=\bar d_{\det}\bar b+\bar\Delta(s).\]
Choose $b\in (\bfA^{n-1})^{\wti W,\det}$ such that $\bar b$ is the image of $b$ in the associated graded object: if $\bar b=\sum_w w v_{w,1}\dots v_{w,n-1}\otimes g_w$ with $v_{w,i}\in V$ and $g_w\in C(V)$, then set $b=\sum_w w v_{w,1}\dots v_{w,n-1}\otimes g_w\in \bfA^{n-1}.$ Then 
\[\overline{a-d_{\det} b-\Delta(s)}=\bar a-\bar d_{\det}\bar b-\bar\Delta(s)=0,\]
which means that $a-d_{\det} b-\Delta(s)\in (\bfA^{n-1})^{\wti W}.$ Since
\[d_\triv(a-d_{\det} b-\Delta(s))=d_\triv(a)-d^2(b)-d_{\triv}(\Delta(s))=0-0-0=0,\]
the claim follows by induction.

\subsection{Vogan's Dirac homomorphism} 
Since $d_\triv$ is an odd derivation, it follows that if $a,b\in \ker d_\triv$, then also $ab\in\ker d_\triv.$ In other words, $\ker d_\triv$ is an algebra. The following result is a slight modification of the one for real semisimple Lie algebras \cite{HP} or graded affine Hecke algebras \cite{BCT}.

\begin{theorem}\label{t:vogan}
The projection $\zeta: \ker d_\triv\to \bC[\wti W]^{\wti W}$ defined by Theorem \ref{t:ker-dW} is an algebra homomorphism.
\end{theorem}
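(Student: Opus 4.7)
The plan is to verify that the projection $\zeta$ is multiplicative by showing that $\im d_{\det}$ is a two-sided ideal inside the algebra $\ker d_\triv$. Once this is done, the quotient $\ker d_\triv/\im d_{\det}$ is identified as a vector space, via Theorem \ref{t:ker-dW}, with $\Delta(\bC[\wti W]^{\wti W})\cong \bC[\wti W]^{\wti W}$, and because $\Delta$ is a group-algebra homomorphism this identification is itself an algebra isomorphism. The map $\zeta$ is then the composition of the quotient map with this identification, so it is an algebra homomorphism.

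To prove that $\im d_{\det}$ is a two-sided ideal inside $\ker d_\triv=\im d_{\det}\oplus\Delta(\bC[\wti W]^{\wti W})$, it is enough to check that (i) $d_{\det}(b_1)d_{\det}(b_2)$, (ii) $d_{\det}(b)\Delta(s)$, and (iii) $\Delta(s)d_{\det}(b)$ all lie in $\im d_{\det}$, for $b,b_1,b_2\in \bfA^{\wti W,\det}$ and $s\in \bC[\wti W]^{\wti W}$. My strategy in each case is to apply the odd derivation identity (Lemma \ref{l:der}) to rewrite the product as $d$ of an element that lives in $\bfA^{\wti W,\det}$. For (i), the identity $d(b_1 d_{\det}(b_2))=d_{\det}(b_1)d_{\det}(b_2)+\ep(b_1)d^2(b_2)$ together with $d^2=d_\triv d_{\det}=0$ (Lemma \ref{l:d2=0}) does the job once I observe that $b_1 d_{\det}(b_2)\in \bfA^{\wti W,\det}$. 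For (ii), the same template applies, with $d(\Delta(s))=0$ (Lemma \ref{l:W-ker}) in place of the vanishing of $d^2$; here one uses that $\Delta(s)$ is $\wti W$-invariant because $s$ is $\wti W$-central, so that $b\Delta(s)\in \bfA^{\wti W,\det}$.

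The one genuine obstacle is (iii), where the derivation identity yields $\ep(\Delta(s))d_{\det}(b)$ rather than $\Delta(s)d_{\det}(b)$, and $\ep$ does not fix $\Delta(\bC[\wti W])$ pointwise. The remedy is a direct computation with Lemma \ref{l:det}: since $\ep(\wti w)=\det(p(\wti w))\wti w$ in $C(V)$, one has $\ep(\Delta(\wti w))=\det(p(\wti w))\Delta(\wti w)$, hence $\ep(\Delta(s))=\Delta(\chi s)$ where $\chi=\det\circ p$ is a character of $\wti W$. Because $\chi$ is a character, $\chi s$ remains $\wti W$-central, so the argument first gives $\Delta(\chi s)d_{\det}(b)\in\im d_{\det}$ for every central $s$; applying this conclusion with $s$ replaced by $\chi s$ then yields $\Delta(s)d_{\det}(b)\in\im d_{\det}$, completing the plan.
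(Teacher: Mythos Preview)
Your proposal is correct and is essentially the paper's own argument, repackaged structurally: the paper writes $z_i=\Delta(\zeta(z_i))+d_{\det}(a_i)$ and checks directly that each cross term in $z_1z_2$ lies in $\im d_{\det}$, which is exactly your verification that $\im d_{\det}$ is a two-sided ideal. Your treatment of case (iii) via the character $\chi=\det\circ p$ is more explicit than the paper's brief appeal to ``the commutation relation between $\C D$ and $\Delta(\bC[\wti W])$''; note that this commutation relation $\Delta(\wti w)\C D=\C D\,\ep(\Delta(\wti w))$ gives the slightly shorter direct identity $\Delta(s)\,d(b)=d\bigl(\ep(\Delta(s))\,b\bigr)$, which places $\Delta(s)d_{\det}(b)$ in $\im d_{\det}$ in one step without the substitution trick.
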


\begin{proof}
Let $z_1,z_2\in \ker d_\triv$ be given. By Theorem \ref{t:ker-dW}, there exist $a_1,a_2\in \bfA^{\wti W}$ such that $z_i=\Delta(\zeta(z_i))+d_{\det}(a_i).$ Then
\begin{align*}
z_1z_2&=\Delta(\zeta(z_1)\zeta(z_2))+d_{\det}(a_1)\Delta(\zeta(z_2))+\Delta(\zeta(z_1))d_{\det}(a_2)+d_{\det}(a_1)d_{\det}(a_2)\\
&=\Delta(\zeta(z_1)\zeta(z_2))+d_{\det}(a_1\Delta(\zeta(z_2))+\Delta(\zeta(z_1))a_2) +d_{\det}(a_1d_{\det}(a_2)),
\end{align*}
where we have used the commutation relation between $\C D$ and $\Delta(\bC[\wti W])$ and that $d_{\det}$ is a derivation. Now apply Theorem \ref{t:ker-dW} to $z_1z_2$, and the claim follows.
\end{proof}

Since the image of $\zeta$ is an abelian algebra, the homomorphism $\zeta$ must factor through to the abelianization $(\ker d_\triv)^\ab=\ker d_\triv/[\ker d_\triv,\ker d_\triv].$

\begin{corollary}
The map $\zeta$ gives an algebra homomorphism $\zeta: (\ker d_\triv)^\ab\to \bC[\wti W]^{\wti W}$.
\end{corollary}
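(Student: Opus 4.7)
The plan is to deduce this corollary directly from Theorem \ref{t:vogan}, using only the standard universal property of the abelianization. The observation that makes everything work is that the target $\bC[\wti W]^{\wti W}$ of $\zeta$ is the subspace of $\wti W$-invariants under the conjugation action, which coincides with the center $Z(\bC[\wti W])$; in particular, it is a commutative subalgebra of $\bC[\wti W]$.

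First, I would record that $\zeta: \ker d_\triv \to \bC[\wti W]^{\wti W}$ is an algebra homomorphism by Theorem \ref{t:vogan}. Next, for any $z_1, z_2 \in \ker d_\triv$, the commutator $[z_1, z_2] = z_1 z_2 - z_2 z_1$ is again in $\ker d_\triv$ (since $\ker d_\triv$ is an algebra, as noted just before Theorem \ref{t:vogan}), and its image under $\zeta$ equals $\zeta(z_1)\zeta(z_2) - \zeta(z_2)\zeta(z_1) = 0$ by commutativity of the target. Therefore the two-sided ideal $[\ker d_\triv, \ker d_\triv]$ generated by such commutators lies in $\ker \zeta$.

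Consequently, $\zeta$ descends to a well-defined algebra homomorphism on the quotient $(\ker d_\triv)^\ab = \ker d_\triv / [\ker d_\triv, \ker d_\triv]$, and this is precisely the map claimed in the statement. There is essentially no obstacle here: the content is entirely contained in Theorem \ref{t:vogan}, and the only point to verify is the commutativity of the image, which is immediate from the definition of $\bC[\wti W]^{\wti W}$ as class functions (equivalently, the center of the group algebra).
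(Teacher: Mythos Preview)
Your proposal is correct and follows exactly the same reasoning as the paper: the paper simply remarks that since the image of $\zeta$ is an abelian algebra, the homomorphism must factor through the abelianization $(\ker d_\triv)^\ab$. You have spelled out the details of this one-line observation.
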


\begin{example}
We remark that 
\begin{equation}
\wti \Omega_\bfH\in \ker d_\triv.
\end{equation}
Indeed, since $\wti\Omega_\bfH=-\C D^2+\Delta(\Omega_{\wti W,\ba})$ and $\C D^2$ is $\Delta(\wti W)$-invariant, we have that $\wti\Omega_\bfH$ commutes with $\C D.$ Moreover, $\wti\Omega_\bfH\in (\bfH\otimes C(V)_0)^{\wti W}$, and therefore $d_\triv(\wti\Omega_\bfH)=0.$
The formula for $\C D^2$ implies that $\zeta(\wti\Omega_\bfH)=\Omega_{\wti W,\ba}$.
\end{example}

One uses Theorem \ref{t:vogan} as follows. Suppose $\C B$ is a (finitely generated) abelian subalgebra of $\bfA\cap \bfH\otimes C(V)_0$ such that $\wti\Omega_{\bfH}\in\C B\subset \ker d_\triv$. This means that every element in $\C B$ commutes with $\C D$.

\begin{remark}
When $a_1=0$, one may choose $\C B=Z(\bfH)\otimes 1$.
\end{remark}

Then the homomorphism in Theorem \ref{t:vogan} defines a morphism
\begin{equation}\label{e:spec-morphism}
\zeta^*: \Irr(\wti W)=\Spec \bC[\wti W]^{\wti W}\to \Spec \C B.
\end{equation}

Later in the paper, we will discuss the morphism $\zeta^*$ in the case of rational Cherednik algebras.

\subsection{Dirac cohomology}
We can define the notion of Dirac cohomology in this setting. The Clifford algebra $C(V)$ is central simple when $\dim V$ is even and it has a unique complex simple module $S$. When $\dim V$ is odd, the even part $C(V)_0$ is central simple with unique simple module $S$ which can be extended in two non isomorphic ways, $S^+$ and $S^-$ to $C(V).$ The restrictions of the spin modules to $\wti W$ give $\wti W$-representations which are related as follows
\begin{equation}
S\otimes \det\cong S,\ S^+\otimes \det\cong S^-.
\end{equation}
\begin{definition}\label{d:coh}
Let $X$ be a finitely generated $\bfH$-module. We say that $X$ is $\Omega_\bfH$-admissible if $X$ has a decomposition into $\Omega_\bfH$-generalized eigenspaces:
\begin{equation}
X=\bigoplus_{\lambda\in \bC} X_\lambda,
\end{equation}
such that each $X_\lambda$ is finite dimensional. Since $\Omega_\bfH$ is $W$-invariant, each $X_\lambda$ is a finite dimensional $W$-representation.

Let $X$ be an $\Omega_\bH$-admissible $\bfH$-module and let $\C S\in \{S,S^+,S^-\}$ be a spin module. The Dirac operator of $X$ (and $\C S$) is 
\begin{equation}
D_X: X\otimes \C S\to X\otimes \C S, 
\end{equation}
given by the action of the Dirac element $\C D$. The Dirac cohomology of $X$ (and $\C S$) is
\begin{equation}
H_D(X)=\ker D_X/\ker D_X\cap \im D_X.
\end{equation}
\end{definition}

\begin{lemma}
Suppose $X$ is an $\Omega_\bfH$-admissible module. Then $H_D(X)$ is a finite dimensional $\wti W$-representation (or zero).
\end{lemma}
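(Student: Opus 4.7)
The statement decomposes into (i) $H_D(X)$ carries a $\wti W$-action and (ii) it is finite dimensional.

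For (i), I would invoke Lemma \ref{l:D-inv} in the form $\Delta(\wti w)\,\C D = \det(p(\wti w))\,\C D\,\Delta(\wti w)$ in $\bfH\otimes C(V)$. Applied to $X\otimes \C S$, this implies $\Delta(\wti w)$ sends $\ker D_X$ into itself and $\im D_X$ into itself, hence also preserves $\ker D_X\cap \im D_X$. The quotient $H_D(X)$ therefore inherits a well-defined $\wti W$-module structure.

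For (ii), I would apply Theorem \ref{t:D2} to write
\[
D_X^2 = -\Omega_\bfH\otimes 1 + T, \qquad T := \tfrac{1}{2}(1\otimes \kappa_1) + \Delta(\Omega_{\wti W,\ba}),
\]
as operators on $X\otimes\C S$. Since $\Omega_\bfH\in\bfH^W$, it commutes with every element $w\otimes c$ with $w\in W$, $c\in C(V)$; in particular $[\Omega_\bfH\otimes 1,\,T]=0$, so $D_X^2$ preserves each (finite dimensional) summand $X_\lambda\otimes \C S$ of the admissible decomposition. Writing $\Omega_\bfH = \lambda+N_\lambda$ on $X_\lambda$ with $N_\lambda$ nilpotent and $W$-equivariant, the operator $N_\lambda\otimes 1$ also commutes with $T$, so the generalized eigenvalues of $D_X^2|_{X_\lambda\otimes\C S}$ are exactly $-\lambda+\mu$ for $\mu\in\operatorname{spec}(T|_{X_\lambda\otimes \C S})$.

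The next step is to bound $\operatorname{spec}(T|_{X_\lambda\otimes\C S})$ uniformly in $\lambda$. Averaging Hermitian forms over the finite groups $W$ and $\wti W$ produces invariant inner products on $X_\lambda$ and $\C S$, with respect to which each $p(\wti w)\otimes \wti w$ is unitary while $1\otimes\tfrac{1}{2}\kappa_1$ acts on the $\C S$ factor alone. This gives a constant $C$, depending only on $\bfH$, with $\|T\|_{X_\lambda\otimes \C S}\leq C$ for every $\lambda$. Hence $\ker(D_X^2)\cap (X_\lambda\otimes\C S)\neq 0$ forces $|\lambda|\leq C$. Combined with admissibility and finite generation of $X$, only finitely many such $\lambda$ contribute, so $\ker D_X\subseteq \ker D_X^2$ is finite dimensional and therefore so is $H_D(X)$.

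The main obstacle --- the only non-routine step --- is the finiteness of $\{\lambda : X_\lambda\neq 0,\ |\lambda|\leq C\}$. For the modules of principal interest (finite-dimensional modules, or category-$\C O$ modules for rational Cherednik algebras), this is automatic, but in the abstract setting one may need an additional argument using the finite generation of $X$ as an $\bfH$-module to preclude accumulation of $\Omega_\bfH$-eigenvalues inside the closed disk of radius $C$.
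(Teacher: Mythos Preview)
Your argument for part (i) matches the paper's.

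For part (ii) you have correctly identified the weak point of your own approach: the norm bound $|\lambda|\le C$ does not by itself give finiteness of the contributing $\lambda$, and nothing in the definition of $\Omega_\bfH$-admissibility precludes accumulation of generalized eigenvalues in a bounded disk. The paper sidesteps this entirely, and the fix is easy once you see it: instead of bounding the operator norm of $T=\tfrac12(1\otimes\kappa_1)+\Delta(\Omega_{\wti W,\ba})$, compute its spectrum and observe that it is a \emph{fixed finite set}, independent of $X$ and of $\lambda$. Indeed $\kappa_1\in C(V)^{\wti W}$, so $1\otimes\tfrac12\kappa_1$ commutes with $\Delta(\Omega_{\wti W,\ba})$; the first summand acts through the finite-dimensional space $\C S$ with a finite set of eigenvalues $\{m_1\}$, while $\Omega_{\wti W,\ba}$ is central in $\bC[\wti W]$ and therefore acts on each irreducible $\wti W$-type $\wti\sigma$ by a scalar $N_\ba(\wti\sigma)$. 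Hence $\operatorname{spec}(T)\subset\{m_1+N_\ba(\wti\sigma)\}$, a finite set depending only on $\ba$ and $\C S$. Since the generalized eigenvalues of $D_X^2$ on $X_\lambda\otimes\C S$ are $-\lambda+\mu$ with $\mu\in\operatorname{spec}(T)$, the condition $\ker D_X^2\cap(X_\lambda\otimes\C S)\neq 0$ forces $\lambda$ into this finite set, and finite-dimensionality of $H_D(X)\subset\ker D_X\subset\ker D_X^2$ follows at once---no appeal to finite generation or to special features of category $\C O$ is needed.
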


\begin{proof}
The fact that $H_D(X)$ is a $\wti W$-representation follows from Lemma \ref{l:D-inv}. Suppose $\wti\sigma$ is a $\wti W$-representation that occurs in $H_D(X)$. Let $0\neq \wti x\in \ker D_X$ be an element of the $\wti\sigma$-isotypic component lying in $X_\lambda \otimes\C S$ for an $\Omega_\bfH$-eigenspace $X_\lambda$, $\lambda\in\bC$.  

Suppose $\C S=\oplus_{j=1}^\ell\wti\mu_j$ is the decomposition of $\C S$ into irreducible $\wti W$-representations. The element $\kappa_1\in C(V)$ is $\wti W$-invariant, thus $\kappa_1$ acts on each $\mu_j$ acts by a scalar.  Since $1\otimes \frac 12\kappa_1$ commutes with both $\Omega_{\bfH}\otimes 1$ and with $\Delta(\bC[\wti W])$, we may assume without loss of generality that $(1\otimes \frac 12\kappa_1)$ acts on $\wti x$ by a scalar $m_1.$ 

Since $D_X^2(\wti x)=0$, Theorem \ref{t:D2} gives:
\[(\Omega_{\bfH}\otimes 1) \wti x= (m_1 +N_{\ba}(\wti\sigma) )\wti x,\]
where $N_{\ba}(\wti\sigma)$ is the scalar by which  the central element $\Omega_{\wti W,\ba}$ acts on $\wti\sigma.$
Thus $\lambda=m_1+N_{\ba}(\wti\sigma).$ The scalar $m_1$ is depends only on the decomposition of $\C S$ into irreducible $\wti W$-representations, and thus there are only finitely many possibilities. Since there also only finitely many different $\wti\sigma$'s, it follows that there are only finitely many possible $\lambda$'s that contribute to $H_D(X)$. Hence $H_D(X)$ is finite dimensional.
\end{proof}

The main idea behind Dirac cohomology applied to this setting is the following result. Recall the morphism
$\zeta^*:\Irr(\wti W)\to \Spec \C B$ for an abelian subalgebra 
\begin{equation}\label{e:B}
\C B\subset (\bfH\otimes C(V))^{\wti W}
\end{equation}
 that satisfies
\begin{enumerate}
\item $\wti\Omega_\bfH\in \C B$;
\item $[\C D,\C B]=0.$
\end{enumerate}
Notice that $H_D(X)$ is a $\bC[\wti W]\otimes \C B$-module. 

\begin{theorem}\label{t:vogan-conj}
Let $X$ be an $\Omega_{\bfH}$-admissible $\bfH$-module. Let $\C B$ be an algebra as in (\ref{e:B}). Suppose $H_D(X)\neq 0$. If there is a nonzero $\wti\sigma\otimes\chi$-isotypic component of $\bC[\wti W]\otimes \C B$ in $H_D(X)$ for $\wti\sigma\in\Irr(\wti W)$ and $\chi\in\Spec \C B$, then
\[\chi=\zeta^*(\wti\sigma).
\]
\end{theorem}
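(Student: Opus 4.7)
The plan is to use the decomposition from Theorem \ref{t:ker-dW} to identify, on Dirac cohomology, the action of each $b\in\C B$ with the action of the central element $\Delta(\zeta(b))\in\Delta(\bC[\wti W]^{\wti W})$, and then match eigencharacters. First I would verify that $\bC[\wti W]\otimes\C B$ acts on $H_D(X)$: the hypothesis $[\C D,\C B]=0$ ensures $\C B$ preserves both $\ker D_X$ and $\im D_X$, while Lemma \ref{l:D-inv} supplies the analogous property for $\Delta(\wti W)$ (with the twist by $\det$ disappearing after passage to the quotient). Since $\C B\subset (\bfH\otimes C(V))^{\wti W}$, these two actions commute on $H_D(X)$, making the $\wti\sigma\otimes\chi$-isotypic decomposition well defined.

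Next, fix $b\in\C B\subset \ker d_\triv$. By Theorem \ref{t:ker-dW} there exists $a\in \bfA^{\wti W,\det}$ with
\[ b=\Delta(\zeta(b))+d_{\det}(a)=\Delta(\zeta(b))+\C D a-\ep(a)\C D. \]
Let $\wti x\in\ker D_X$ represent a nonzero element of the $\wti\sigma\otimes\chi$-isotypic component of $H_D(X)$. Since $\C D\wti x=0$, evaluating the displayed identity on $\wti x$ gives
\[ b\wti x=\Delta(\zeta(b))\wti x+\C D(a\wti x), \]
and the last term lies in $\im D_X$. Consequently, $b$ and $\Delta(\zeta(b))$ induce the same operator on $H_D(X)$.

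Finally, since $\zeta(b)\in\bC[\wti W]^{\wti W}$ is central in $\bC[\wti W]$, Schur's lemma implies $\Delta(\zeta(b))$ acts on the $\wti\sigma$-isotypic subspace of $H_D(X)$ by the scalar $\zeta^*(\wti\sigma)(b)$. Combining with the previous step, $b$ itself acts by this same scalar on the entire $\wti\sigma$-isotypic subspace; in particular, on the nonzero $\wti\sigma\otimes\chi$-isotypic part one has $\chi(b)=\zeta^*(\wti\sigma)(b)$ for every $b\in\C B$, which is the desired identity $\chi=\zeta^*(\wti\sigma)$.

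The main obstacle, such as it is, lies not in any single computation but in the interplay of the three ingredients: the decomposition $\ker d_\triv=\im d_{\det}\oplus\Delta(\bC[\wti W]^{\wti W})$ from Theorem \ref{t:ker-dW}, the identity $d_{\det}(a)=\C D a-\ep(a)\C D$ which forces $d_{\det}$-exact terms to act trivially on Dirac cohomology, and Schur's lemma applied to the central elements $\Delta(\zeta(b))$. A minor technical nuance is that $\chi$ is a priori only a generalized eigencharacter of the commutative algebra $\C B$, but the argument actually shows the $\wti\sigma$-isotypic piece is an honest eigenspace for $\C B$, so any potential nilpotent part automatically vanishes.
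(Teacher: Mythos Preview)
Your proof is correct and follows essentially the same route as the paper's: decompose $b\in\C B\subset\ker d_\triv$ as $\Delta(\zeta(b))+\C D a-\ep(a)\C D$, apply this to a representative $\wti x\in\ker D_X$, and observe that the $\C D a\wti x$ term lies in $\im D_X$, forcing $\chi(b)=\wti\sigma(\zeta(b))$. Your additional remarks on why $\bC[\wti W]\otimes\C B$ acts on $H_D(X)$ and on the generalized-eigencharacter nuance are welcome clarifications but not departures from the paper's argument.
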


\begin{proof}
Suppose $\wti x\neq 0$ is an element in the $\wti\sigma$-isotypic component in $H_D(X)$ such that $\C B$ acts on $x$ by $\chi\in\Spec\C B$. 
Let $0\neq z\in \C B$ be given. By Theorem \ref{t:vogan}, there exists $a\in \bfH\otimes C(V)$ such that
\[z=\Delta(\zeta(z))+\C D a-\ep(a)\C D.\]
Apply this to $\wti x$. We find:
\[\chi(z)\wti x=\wti\sigma(\zeta(z)) x+ \C D a \wti x-\ep(a) \C D \wti x=\sigma(\zeta(x)) x+\C D a,\]
where $\wti\sigma(\zeta(z))$ and $\chi(z)$ are scalars.Then  
\[(\chi(z)-\sigma(\zeta(z))\wti x=\C D a\wti x.\]
Since the right hand side is in $\im D_X$ and the left hand side is in $H_D(X)$, it follows that
\[(\chi(z)-\sigma(\zeta(z))\wti x=0,\]
and since $\wti x\neq 0$, we get $\chi(z)=\sigma(\zeta(z)$ for all $z\in\C B$, or in other words, $\chi=\zeta^*(\wti\sigma).$
\end{proof}

\begin{remark}
If $a_1=0$, then we choose $\C B=Z(\bfH)\otimes 1$ and then Theorem \ref{t:vogan-conj} says that the central character of $X$ is uniquely determined by $H_D(X)$.
\end{remark}

\section{Symplectic reflection algebras}
We apply the theory to the class of symplectic reflection algebras introduced by Etingof and Ginzburg \cite{EG}.

\subsection{Definition} Let $V$ be a $2n$-dimensional complex vector space carrying a non degenerate symplectic $2$-form $\omega$ and let $Sp(V)$ be the corresponding symplectic group. An element $s\in Sp(V)$ is called a symplectic reflection if $\mathsf{rk}(\Id_V-s)=2$. In that case $V=\ker(\Id_V-s)\oplus \im(\Id_V-s)$ is an $\omega$-orthogonal decomposition. Denote by $\omega_s$ the skew-symmetric form that has $\ker(\Id_V-s)$ as its radical, and it equals $\omega$ on $\im(\Id_V-s)$.

Let $W\subset Sp(V)$ be a finite symplectic reflection group, i.e., a finite subgroup generated by  symplectic reflections. Let $\C R\subset W$ denote the set of symplectic reflections in $W.$ (We have reserved the notation $S$ for the spin module of the Clifford algebra.) Let $c:\C R\to\bC$ be a $W$-invariant parameter function and $t\in\bC$ a constant.

\begin{definition}[{\cite[Theorem 1.3]{EG}}] The symplectic reflection algebra $\bfH_{t,c}$ associated to the data above is the quotient of the semi direct product $T(V)\rtimes \bC[W]$ by the relations:
\begin{equation}
[u,v]=t\omega(u,v)+\sum_{s\in S} c_s\omega_s(u,v) s.
\end{equation}
\end{definition}
These algebras have the PBW property. In fact, they are particular cases of Drinfeld's Hecke algebras $\bfH_\ba$ (Definition \ref{d:Drinfeld}) if we take the family of forms $\ba$ to equal:
\begin{equation}
a_w=\begin{cases}\omega,&w=1,\\
\omega_s,&w=s\in \C R,\\
0,&\text{otherwise.}\end{cases}
\end{equation}

\subsection{Examples} The classification of indecomposable symplectic reflection groups is known, see \cite{Co}. In particular, there are two important families of symplectic reflection groups:

\begin{enumerate}
\item (Complex reflection groups). Let $\fh$ be a finite dimensional $\bC$-vector space and $W\subset GL(\fh)$ a complex reflection group. Let $V=\fh+\fh^*$ with the standard symplectic form
\begin{equation}
\omega(y_1+x_1,y_2+x_2)=x_2(y_1)-x_1(y_2),\quad x_1,x_2\in\fh^*,~y_1,y_2\in\fh.
\end{equation}
The group $W$ acts diagonally on $V=\fh+\fh^*.$
\item (Wreath products). Let $\omega$ be a non degenerate symplectic form on $\bC^2$ and $\Gamma\subset Sp(2,\bC)$ be a finite group. Take $V=\bigoplus_{i=1}^n \bC^2$ with symplectic form $\omega$ induced from the symplectic form on $\bC^2$. Let $W$ be the wreath product $W=S_n\wr \Gamma$, where $S_n$ is the symmetric group acting on $V$: the $i$-th copy of $\Gamma$ acts on the $i$-th copy of $\bC^2$ and $S_n$ acts by permuting the copies of $\bC^2$.
\end{enumerate}

Assume in addition that there exists a non degenerate $W$-invariant symmetric bilinear form $\<~,\>$ on $V$.  Such is always the case when the symplectic reflection algebra comes from complex reflection groups. Define the symmetric form coming from the pairing of $\fh$ and $\fh^*$, i.e.:
\begin{equation}
\<x,x\>=0,\ \<y,y\>=0,\ \<x,y\>=\<y,x\>=x(y),
\end{equation}
for all $x\in\fh^*$ and $y\in\fh$. This form is $W$-invariant. This is the case of rational Cherednik algebras that we discuss next.

\subsection{Rational Cherednik algebra}
As before, let $\fh$ be a dimensional $\bC$-vector space, denote by $\fh^*$ its dual, and $V=\fh+\fh^*.$ Let $\langle~,~\rangle:V\times V\to\bC$ be the natural bilinear symmetric pairing defined in the previous subsection.  Let $W\subset GL(\fh)$ be a complex reflection group with set of pseudo-reflections $\C R$ acting diagonally on $V$. 

For every reflection $s\in\C R$, the spaces $\im(\Id_V-s)|_{\fh^*}$ and $\im(\Id_V-s)|_{\fh}$ are one-dimensional. Choose $\al_s$ and $\al_s^\vee$ nonzero elements in $\im(\Id_V-s)|_{\fh^*}$ and $\im(\Id_V-s)|_{\fh}$, respectively. Then there exists $\lambda_s\in \bC$, $\lambda_s\neq 1$ a root of unity, such that
\begin{equation}
s(\al_s^\vee)=\lambda_s \al_s^\vee,\ s(\al_s)=\lambda_s^{-1} \al_s.
\end{equation}
(In the case when $W$ is a finite reflection group, $\lambda_s=-1$.)
For every $v\in V$ such that $\<v,v\>\neq 0$, denote by $s_v$ the reflection in the hyperplane perpendicular to $v$. The reflection $s_v$ is given by:
\[s_v(u)=u-\frac 2{\langle v,v\rangle}\langle u, v\rangle v,\ u\in V.\]

\begin{lemma}\label{l:s-split}
Let $\sqrt{\lambda_s}$ be a square root of $\lambda_s$. Then $s=s_{v_s}s_{v_s'} \in O(V)$, where  $v_s=\sqrt{\lambda_s}\al_s^\vee+\al_s$ and $v_s'=\al_s^\vee+\sqrt{\lambda_s}\al_s$. 
\end{lemma}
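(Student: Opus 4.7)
The plan is to verify the identity $s = s_{v_s}s_{v_s'}$ by reducing everything to a $2$-dimensional calculation on the subspace $U := \mathrm{span}(\al_s^\vee,\al_s) \subset V$, on whose orthogonal complement both sides act as the identity.

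First I would identify $U^\perp$ with the fixed subspace $V^s$. Writing $u = y+x$ with $y\in\fh$, $x\in\fh^*$, one has $\langle u,\al_s^\vee\rangle = x(\al_s^\vee)$ and $\langle u,\al_s\rangle = \al_s(y)$, so $U^\perp = \ker\al_s \oplus \ker\al_s^\vee$ (the second kernel taken inside $\fh^*$). This coincides with $V^s$, since $s$ acts diagonally on $\fh\oplus\fh^*$ and fixes $\ker\al_s\subset\fh$ and $\ker\al_s^\vee\subset\fh^*$. A brief observation also gives $\al_s(\al_s^\vee)\neq 0$: otherwise $\al_s^\vee$ would lie in $\ker\al_s\subset V^s$, contradicting $s(\al_s^\vee)=\la_s\al_s^\vee$ with $\la_s\neq 1$. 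Hence the form restricts non-degenerately to $U$---in the basis $\{\al_s^\vee,\al_s\}$ its Gram matrix is anti-diagonal with off-diagonal entry $\al_s(\al_s^\vee)$---and $V = U \oplus V^s$ is an orthogonal splitting.

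Since $v_s,v_s' \in U$, the reflections $s_{v_s}$ and $s_{v_s'}$ fix $V^s$ pointwise and stabilize $U$. It therefore suffices to check that $(s_{v_s}s_{v_s'})|_U$ is the diagonal transformation sending $\al_s^\vee\mapsto \la_s\al_s^\vee$ and $\al_s\mapsto \la_s^{-1}\al_s$. I would compute $\langle v_s,v_s\rangle = \langle v_s',v_s'\rangle = 2\sqrt{\la_s}\,\al_s(\al_s^\vee)$, confirm this is nonzero, and apply the reflection formula $s_v(u) = u - \frac{2\langle u,v\rangle}{\langle v,v\rangle}v$ to the two basis vectors. A short calculation then yields $s_{v_s'}(\al_s^\vee) = -\sqrt{\la_s}\,\al_s$ followed by $s_{v_s}(-\sqrt{\la_s}\,\al_s) = \la_s\al_s^\vee$, and the parallel calculation for $\al_s$ produces $\la_s^{-1}\al_s$.

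No real obstacle arises; the entire verification is a $2\times 2$ computation in the hyperbolic plane $U$. The only point to be careful about is a consistent choice of $\sqrt{\la_s}$: the specific asymmetric combination $v_s = \sqrt{\la_s}\al_s^\vee + \al_s$, $v_s' = \al_s^\vee + \sqrt{\la_s}\al_s$ is engineered so that the two squared norms agree and the product $s_{v_s}s_{v_s'}$ yields eigenvalues $\la_s,\la_s^{-1}$ rather than their negatives or other sign variants that a different pairing of square roots would produce.
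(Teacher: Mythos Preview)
Your verification is correct and is exactly the ``straightforward'' computation the paper omits: reduce to the hyperbolic plane $U=\mathrm{span}(\al_s^\vee,\al_s)$, check that both sides fix $U^\perp=V^s$, and verify the action on $\al_s^\vee$ and $\al_s$ by the reflection formula. Your intermediate values $\langle v_s,v_s\rangle=\langle v_s',v_s'\rangle=2\sqrt{\lambda_s}\,\langle\al_s^\vee,\al_s\rangle$ agree with those recorded in the paper just after the lemma, and the step-by-step images $s_{v_s'}(\al_s^\vee)=-\sqrt{\lambda_s}\,\al_s$, $s_{v_s}(-\sqrt{\lambda_s}\,\al_s)=\lambda_s\al_s^\vee$ (and analogously for $\al_s$) are all correct.
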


\begin{proof}
Straightforward.
\end{proof}

\begin{definition}
The rational Cherednik algebra $\bfH_{t,c}$ associated to $\fh,W$, the parameter $t\in\bC$, and the $W$-invariant parameter function $c:\C R\to\bC$ is the quotient of $T(V)\rtimes W$ by the relations:
\begin{enumerate}
\item $[y_1,y_2]=0$, $[x_1,x_2]=0$, for all $y_1,y_2\in \fh$, $x_1,x_2\in\fh^*$;
\item $\displaystyle{[y,x]=t\langle y,x\rangle-\sum_{s\in\C R} c_s\frac{\langle y,\al_s\rangle\langle\al_s^\vee,x\rangle}{\langle\al_s^\vee,\al_s\rangle} s},$ for all $y\in\fh,$ $x\in \fh^*.$
\end{enumerate}
\end{definition}
To translate to this setting the results proved in the general setting of Drinfeld's algebra $\bfH_\ba$, notice that the only nonzero values of the skew-symmetric forms $a_w$ are given by:
\begin{equation}
a_1(y,x)=t\langle y,x\rangle,\ a_s(y,x)=-c_s\frac{\langle y,\al_s\rangle\langle\al_s^\vee,x\rangle}{\langle\al_s^\vee,\al_s\rangle},
\end{equation}
for $y\in\fh$, $x\in\fh^*$.

Let $\{y_i\}$ be a basis of $\fh$ and $\{x_i\}$ the dual basis of $\fh^*$. The dual bases of $V$ are then $(v_i)=((y_i),(x_i))$ and $(v^i)=((x_i),(y_i)).$

The element $\bh$ defined in (\ref{e:bh}) equals:
\begin{equation}
\bh=\sum_i (y_ix_i+x_iy_i).
\end{equation}
We compute the scalars $e_w$ that were defined in (\ref{e:w-tilde}). Writing $s=s_vs_{v'}$ as in Lemma \ref{l:s-split}, we have
\[
e_s=\frac{a_s(v_s,v_s') \langle v_s,v_s'\rangle}{\langle v_s,v_s\rangle\langle v_s',v_s'\rangle-\langle v_s,v_s'\rangle^2}.
\]
Computing directly, we find
\begin{equation}
\begin{aligned}
a_s(v_s,v_s')=c_s(\lambda_s-1)\langle \al_s^\vee,\al_s\rangle,\ \langle v_s,v_s'\rangle=(\lambda_s+1)\langle\al_s^\vee,\al_s\rangle,\\
 \langle v_s,v_s\rangle=\langle v_s',v_s'\rangle=2\sqrt\lambda\langle\al_s^\vee,\al_s\rangle,
\end{aligned}
\end{equation}
and therefore 
\begin{equation}
e_s=c_s\frac{\lambda_s+1}{1-\lambda_s}.
\end{equation}
Thus the element 
$\Omega_{\bfH_{t,c}}$ defined in (\ref{e:Omega}) equals
\begin{equation}
\Omega_{\bfH}=\sum_i (y_ix_i+x_iy_i)-\sum_{s\in \C R} c_s\frac{\lambda_s+1}{1-\lambda_s} s\in \bfH_{t,c}^W.
\end{equation}
Using the commutation relation between $x_i,y_i$, one shows easily that
\[\sum_{i} y_ix_i=\sum_{i}x_iy_i+nt-\sum_s c_s s,\]
and thus
\begin{equation}\label{e:Omega-cherednik}
\Omega_\bfH=2\sum_i x_iy_i+nt-2\sum_{s\in \C R}\frac {c_s}{1-\lambda_s} s.
\end{equation}
By Proposition \ref{p:Omega-central} applied to this setting,
\begin{equation}\label{e:h-grading}
[\Omega_{\bfH},x]=2tx,\ [\Omega_{\bfH},y]=-2ty,\quad \text{for all }x\in\fh^*, y\in\fh.
\end{equation}
\begin{remark}
The element $\Omega_\bfH$ is of course known in the literature of rational Cherednik algebra. It essentially equals twice the well-known grading  as used in \cite{GGOR} for example. It is interesting that the same element appears naturally in the present Dirac setting.
\end{remark}

\subsection{The Clifford algebra} 
Recall $C(V)$, the complex Clifford algebra defined by $V$ and $\langle~,~\rangle$.  In terms of the $x_i,y_i$'s the relations are:
\begin{equation}
x_i\cdot x_j=-x_j\cdot x_i,\ y_i\cdot y_j=-y_j\cdot y_i,\ x_i\cdot y_j+y_j\cdot x_i=-2\delta_{i,j}.
\end{equation} 

The spin $C(V)$-module $S$ is realized on the vector space $\bigwedge \fh$ with the action:
\begin{align}
y\cdot (y_1\wedge \dots\wedge y_k)&=y\wedge y_1\wedge\dots\wedge y_k,\quad y\in \fh;\\
x\cdot (y_1\wedge\dots\wedge y_k)&=2\sum_{i}(-1)^i\langle y_i,x\rangle y_1\wedge\dots\wedge\hat y_i\wedge\dots\wedge y_k.
\end{align}

The element $\kappa_1$ from (\ref{e:kappa1}) becomes
\begin{equation}\label{e:kappa-cherednik}
\frac 12\kappa_1=\frac t2\sum_{i} (x_i y_i-y_ix_i)=t(\bom+n)\in C(V),\text{ where }\bom=\sum_ix_iy_i.
\end{equation}
From Lemma \ref{l:kappa-grading}, for every $x\in\fh^*$ and $y\in\fh$, we have the following relations in $C(V)$:
\begin{equation}\label{e:kappa-grading} 
[\frac12 \kappa_1,x]=-2tx\text{ and } [\frac 12\kappa_1,y]=2ty.
\end{equation}

\begin{lemma} The element $\frac 12\kappa_1$ acts by $t(-n+2\ell)\cdot\Id$ on $\bigwedge^\ell\fh$ in the spin module $S$.
\end{lemma}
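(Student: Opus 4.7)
The plan is to reduce everything to the element $\bom=\sum_i x_iy_i\in C(V)$ via the identity $\tfrac12\kappa_1=t(\bom+n)$ from (\ref{e:kappa-cherednik}), so that the assertion becomes: $\bom$ acts on $\bigwedge^\ell\fh$ by the scalar $2(\ell-n)$.

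The cleanest route I would take is inductive, using the commutation relation (\ref{e:kappa-grading}): for $y\in\fh$ one has $[\tfrac12\kappa_1,y]=2ty$ in $C(V)$. Since the action of $y$ on the spin module is wedging $\eta\mapsto y\wedge\eta$, sending $\bigwedge^\ell\fh$ into $\bigwedge^{\ell+1}\fh$, this commutation says that if $\tfrac12\kappa_1$ acts as a scalar $c_\ell$ on $\bigwedge^\ell\fh$, then it must act as $c_\ell+2t$ on $\bigwedge^{\ell+1}\fh$. So $c_\ell=c_0+2\ell t$, and the problem collapses to computing $c_0$.

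For the base case $\ell=0$, I apply the formulas for the $C(V)$-action on $S=\bigwedge\fh$ directly to the vacuum $1$: first $y_i\cdot 1 = y_i\in\bigwedge^1\fh$, and then by the displayed formula for the action of $\fh^*$, $x_i\cdot y_i = 2(-1)^1\langle y_i,x_i\rangle\cdot 1=-2$, since $\{x_i\},\{y_i\}$ are dual. Summing over $i$ gives $\bom\cdot 1=-2n$, whence $\tfrac12\kappa_1\cdot 1=t(-2n+n)=-nt$, i.e.\ $c_0=-nt$. Combining, $c_\ell=t(-n+2\ell)$, as claimed.

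There is essentially no obstacle here: the only thing to be careful about is the sign in the contraction formula for the action of $\fh^*$ on $S$, which fixes $c_0$. Alternatively, one could skip the induction and compute $\sum_i x_iy_i$ directly on $y_{j_1}\wedge\cdots\wedge y_{j_\ell}$, splitting into two terms (the "commutator" term $-2n\,\eta$ coming from $x_iy_i+y_ix_i=-2$, plus $2\ell\,\eta$ from the $\ell$ indices $i$ for which $y_i$ already appears in $\eta$, after reordering signs cancel); this gives $\bom\cdot\eta=(2\ell-2n)\eta$ in one line. Either derivation produces the stated eigenvalue $t(-n+2\ell)$.
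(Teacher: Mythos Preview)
Your proposal is correct. The paper's own proof is a single line (``Straightforward, using the definition of the action of $C(V)$ on $S$''), so your direct computation of $\bom$ on a basis wedge is exactly the intended argument; your inductive variant via the commutation relation $[\tfrac12\kappa_1,y]=2ty$ is an equally valid shortcut that reaches the same scalar.
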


\begin{proof}
Straightforward, using the definition of the action of $C(V)$ on $S$.
\end{proof}

\subsection{Pin cover of $W$} Following (\ref{e:w-tilde}), for every $s\in S$, define
\[
\wti s=\frac 1{|v_s||v_s'|}v_s\cdot v_s'\in \wti W,\quad c_{\wti s}=\frac{a_s(v_s,v_s')}{\langle v_s,v_s\rangle\langle v_s',v_s'\rangle-\langle v_s,v_s'\rangle^2}|v_s||v_s'|,
\]
 which simplifies to
 \[c_{\wti s}\wti s=-c_s\left(\frac{1}{\langle\al_s^\vee,\al_s\rangle}\al_s^\vee\al_s+\frac 2{1-\lambda_s}\right).
 \]
 Define for every $s\in \C R$:
 \begin{equation}\label{e:tau-s}
\tau_s=\frac{1-\lambda_s}{2\langle\al_s^\vee,\al_s\rangle}\al_s^\vee\al_s+1\in C(V).
\end{equation}
so that
\begin{equation}\label{e:s-tilde}
c_{\wti s}\wti s=\frac {2c_s}{\lambda_s-1}\tau_s.
\end{equation}

\begin{lemma}\label{l:tau-al}
The elements $\tau_s$ have the properties:
\begin{enumerate}
\item $\tau_s\cdot u\cdot \tau_s^{-1}=s(u),\text{ for all }u\in V.$
\item $\ep(\tau_s)=\tau_s$ and $\tau_s^t=\lambda_s\tau_{s^{-1}}.$
\item $\tau_{s^{-1}}=\tau_s^{-1}.$
\end{enumerate}
\end{lemma}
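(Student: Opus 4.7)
The plan is to verify the three properties by direct computation in the $2$-dimensional Clifford subalgebra generated by $\al_s^\vee$ and $\al_s$, using only two ingredients: $\al_s^\vee$ and $\al_s$ are isotropic (since $\fh$ and $\fh^*$ are Lagrangian for $\langle~,~\rangle$), and the Clifford relation gives $\al_s\al_s^\vee+\al_s^\vee\al_s=-2\langle\al_s^\vee,\al_s\rangle$. An immediate consequence, obtained by moving one $\al_s^\vee$ past an $\al_s$, is
\begin{equation*}
(\al_s^\vee\al_s)^2=-2\langle\al_s^\vee,\al_s\rangle\,\al_s^\vee\al_s,
\end{equation*}
and this is the combinatorial identity that powers all three parts.

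For (2), the equality $\ep(\tau_s)=\tau_s$ is immediate since $\tau_s\in C(V)_0$. For the transpose I would compute $(\al_s^\vee\al_s)^t=(-\al_s)(-\al_s^\vee)=\al_s\al_s^\vee=-\al_s^\vee\al_s-2\langle\al_s^\vee,\al_s\rangle$, substitute into $\tau_s$, and match the result against $\lambda_s\tau_{s^{-1}}$. This step requires fixing conventions: the choices $\al_{s^{-1}}=\al_s$, $\al_{s^{-1}}^\vee=\al_s^\vee$ satisfy the eigenvalue conditions for $s^{-1}$ with $\lambda_{s^{-1}}=\lambda_s^{-1}$. For (3) I would then expand $\tau_s\tau_{s^{-1}}$ using the same conventions and the identity above; the coefficient of $\al_s^\vee\al_s$ vanishes via $(1-\lambda_s)(1-\lambda_s^{-1})=2-\lambda_s-\lambda_s^{-1}$, and the constant term is $1$.

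For (1), I would split the verification along the $\langle~,~\rangle$-orthogonal decomposition $V=V^s\oplus\operatorname{span}(\al_s^\vee,\al_s)$; this is orthogonal because distinct $s$-eigenspaces in an orthogonal module pair trivially unless their eigenvalues are reciprocal. Any $u\in V^s$ is $\langle~,~\rangle$-orthogonal to both $\al_s^\vee$ and $\al_s$, so it anti-commutes with each in $C(V)$ and therefore commutes with $\al_s^\vee\al_s$; since $s$ fixes $u$, conjugation by $\tau_s$ acts as $s$ on $V^s$ automatically. For $u\in\{\al_s^\vee,\al_s\}$, the isotropy relations $(\al_s^\vee)^2=\al_s^2=0$ immediately give $\al_s^\vee\tau_s=\al_s^\vee$ and $\tau_s\al_s=\al_s$, while a short calculation using $(\al_s^\vee\al_s)\al_s^\vee=-2\langle\al_s^\vee,\al_s\rangle\,\al_s^\vee$ (and its symmetric counterpart) yields $\tau_s\al_s^\vee=\lambda_s\al_s^\vee$ and $\al_s\tau_s=\lambda_s\al_s$. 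Multiplying by $\tau_s^{-1}$ then gives $\tau_s\al_s^\vee\tau_s^{-1}=\lambda_s\al_s^\vee=s(\al_s^\vee)$ and $\tau_s\al_s\tau_s^{-1}=\lambda_s^{-1}\al_s=s(\al_s)$, as required.

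There is no serious obstacle; the only point of care is keeping consistent conventions for the representatives $\al_{s^{-1}}$, $\al_{s^{-1}}^\vee$ and the eigenvalue $\lambda_{s^{-1}}$ when comparing $\tau_s^t$ with $\lambda_s\tau_{s^{-1}}$ in (2) and when verifying $\tau_s\tau_{s^{-1}}=1$ in (3). Everything else is mechanical manipulation inside a $4$-dimensional Clifford subalgebra.
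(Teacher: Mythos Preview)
Your proposal is correct. Parts (2) and (3) match the paper's proof essentially verbatim: both you and the paper fix the conventions $\al_{s^{-1}}=\al_s$, $\al_{s^{-1}}^\vee=\al_s^\vee$, $\lambda_{s^{-1}}=\lambda_s^{-1}$, and then carry out the same short Clifford computations (the paper simply says ``easy, direct calculation'' for (3)).

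The only genuine difference is in part (1). The paper does not compute on the eigenspace decomposition at all; instead it observes that $\tau_s$ is a nonzero scalar times $v_s\cdot v_s'$ with $v_s,v_s'$ as in Lemma~\ref{l:s-split}, and then invokes the general Clifford-algebra fact that conjugation by a non-isotropic vector $v$ acts on $V$ as $-s_v$, so conjugation by $v_s v_s'$ acts as $s_{v_s}s_{v_s'}=s$. Your route---splitting $V=V^s\oplus\operatorname{span}(\al_s^\vee,\al_s)$ and checking $\tau_s\al_s^\vee=\lambda_s\al_s^\vee$, $\al_s\tau_s=\lambda_s\al_s$ directly from isotropy---is more hands-on but entirely self-contained: it avoids both Lemma~\ref{l:s-split} and the reflection formula in $C(V)$. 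The paper's argument is shorter and makes the Pin-group origin of $\tau_s$ transparent; yours has the minor advantage that it never needs the auxiliary vectors $v_s,v_s'$ or a choice of $\sqrt{\lambda_s}$.
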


\begin{proof}
(1) Since $\tau_s$ is a scalar times $v_s\cdot v_s'$, it follows that conjugation by $\tau_s$ in $C(V)$ equals
$\tau_s\cdot u\cdot \tau_s^{-1}=s_{v_s}s_{v'_s}(u)=s(u)$, for $u\in V.$

(2) The element $\tau_s$ is in $C(V)_0.$ An easy calculation gives $\tau_s^t=\lambda_s\left(\frac{1-\lambda_s^{-1}}{2\langle\al_s^\vee,\al_s\rangle}\al_s^\vee\al_s+1\right)$, and the claim follows since $\lambda_{s^{-1}}=\lambda_s^{-1}$ and $\al_{s^{-1}}=\al_s$, $\al_{s^{-1}}^\vee=\al_s^\vee.$

(3) This is an easy, direct calculation.
\end{proof}

\begin{remark}\label{r:pin-cover}
Recall the double cover $p:\mathsf{Pin}(V)\to O(V)$, $p(a)(v)=\ep(a)\cdot v\cdot a^{-1}$ and $\wti W=p^{-1}(W)\subset \mathsf{Pin}(V)$. Then Lemma \ref{l:tau-al} implies that
\begin{equation}
p^{-1}(s)=\{\lambda_s^{-1/2}\tau_s,-\lambda_s^{-1/2}\tau_s\}.
\end{equation}
The map $s\mapsto \tau_s$ extends to a homomorphism
\begin{equation}\label{l:W-embed}
\tau:W\to C(V)^\times.
\end{equation}
Indeed, notice first that by Lemma \ref{l:tau-al}(3), $\tau_{s^{-1}}=\tau_s^{-1}$. Secondly, if $s_1,s_2$ are two reflections in $\C R$, then $\tau_{s_1}\cdot \tau_{s_2}\cdot \tau_{s_1^{-1}}=\tau_{s_1}\cdot \tau_{s_2}\cdot \tau_{s_1}^{-1}=s_1(\tau_{s_2})$ by Lemma \ref{l:tau-al}(1). Finally, $s_1(\tau_{s_2})=\frac{1-\lambda_{s_2}}{2\langle\al_{s_2}^\vee,\al_{s_2}\rangle}s_1(\al_{s_2}^\vee)s_1(\al_{s_2})+1=\tau_{s_3}$, where $s_3=s_1s_2s_1^{-1}.$

From now on, define $\tau_w$ to be the image in $C(V)^\times$ of $w\in W$ under the map in (\ref{l:W-embed}).  For convenience, we may work with this embedding rather than $\wti W$ itself; however, we should note that, unlike the cover $\wti W$, the choice of map $\tau$ is not canonical.\end{remark}

Since $C(V)$ acts on $S$, we get an action of $\tau(W)$ on $S$. The following lemma describes the action. 

\begin{lemma}\label{l:W-action} For every $s\in \C R$, \begin{equation}\label{e:det-dual}
\tau_s\cdot(y_1\wedge\dots\wedge y_\ell)=s(y_1)\wedge\dots\wedge s(y_\ell).
\end{equation} 
Thus, the action of $\tau(W)$ on $S$ preserves each piece $\bigwedge^\ell\fh$ of $S$, where it acts as the natural action of $W$ on $\bigwedge^{\ell}\fh.$
\end{lemma}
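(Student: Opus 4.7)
The plan is to reduce everything to a single easy observation: the vacuum $1 \in \bigwedge^0 \fh \subset S$ is annihilated by any $x \in \fh^*$, hence in particular by $\al_s \in \fh^*$. From the explicit formula
\[\tau_s = \frac{1-\lambda_s}{2\langle \al_s^\vee,\al_s\rangle}\al_s^\vee \al_s + 1,\]
and the fact that $\al_s \cdot 1 = 0$ (the contraction formula gives an empty sum on the empty wedge), it follows at once that $\tau_s \cdot 1 = 1$.

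Next, I would commute $\tau_s$ past each $y_i$ using Lemma \ref{l:tau-al}(1). Since $W \subset GL(\fh)$ acts diagonally on $V = \fh + \fh^*$, the subspace $\fh$ is $W$-stable, so for any $y \in \fh$ the element $s(y)$ again lies in $\fh$. The identity $\tau_s \cdot y \cdot \tau_s^{-1} = s(y)$ of Lemma \ref{l:tau-al}(1) rearranges to $\tau_s \cdot y = s(y) \cdot \tau_s$ in $C(V)$. Iterating,
\[\tau_s \cdot (y_1 \wedge \dots \wedge y_\ell) = \tau_s \cdot y_1 \cdots y_\ell \cdot 1 = s(y_1) \cdots s(y_\ell) \cdot \tau_s \cdot 1 = s(y_1) \wedge \dots \wedge s(y_\ell),\]
where the last equality uses $\tau_s \cdot 1 = 1$ and the definition of the action of $\fh \subset C(V)$ on $S$ as wedging.

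This proves the displayed formula for each reflection $s \in \C R$. In particular, each $\tau_s$ preserves the subspace $\bigwedge^\ell \fh \subset S$ and acts there as the natural operator $\bigwedge^\ell s$. Since $\tau(W)$ is generated by the $\tau_s$ as $s$ ranges over $\C R$, and the assignment $w \mapsto \bigwedge^\ell w$ is a group homomorphism $W \to GL(\bigwedge^\ell \fh)$, the second assertion of the lemma follows by multiplicativity: for any reduced expression $w = s_{i_1}\cdots s_{i_k}$ and any $y_1, \ldots, y_\ell \in \fh$, iterating the formula for reflections yields $\tau_w \cdot (y_1\wedge\dots\wedge y_\ell) = w(y_1)\wedge\dots\wedge w(y_\ell)$.

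There is essentially no obstacle here; the only point requiring care is that one must commute $\tau_s$ all the way through the product $y_1 \cdots y_\ell \cdot 1$ before applying it to the vacuum, which is justified precisely because $W$ preserves $\fh$ (so the intermediate expressions stay within the wedging action and no contraction terms appear).
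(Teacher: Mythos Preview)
Your argument is correct and is in fact cleaner than the paper's. The paper proceeds by brute force: it fixes a basis $y_1=\al_s^\vee$, $y_2,\dots,y_n\in\ker(\Id_\fh-s)$ adapted to the reflection $s$, and then checks the formula on each basis vector $y_{i_1}\wedge\dots\wedge y_{i_\ell}$ by splitting into the two cases $i_1=1$ and $i_1\ge 2$, computing the action of $\al_s^\vee\al_s$ explicitly in each. You bypass all of this by invoking the conjugation identity $\tau_s\, y\,\tau_s^{-1}=s(y)$ from Lemma~\ref{l:tau-al}(1) to slide $\tau_s$ through the Clifford word $y_1\cdots y_\ell$, reducing the whole claim to the single observation $\tau_s\cdot 1=1$ (immediate from $\al_s\cdot 1=0$). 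What the paper's approach buys is that it is self-contained and does not appeal to Lemma~\ref{l:tau-al}; what your approach buys is a basis-free, one-line argument that makes transparent why the result holds and extends immediately to all of $\tau(W)$ by multiplicativity, exactly as you note.
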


\begin{proof}
This is a direct computation, using the action of $C(V)$ on $S$. Since the action is linear, it is sufficient to check it on any particular basis of $\fh$. So choose $y_1=\al_s^\vee$, and $y_2,\dots,y_n\in \ker(\Id_{\fh}-s)$.  Let $y_{i_1}\wedge\dots\wedge y_{i_\ell}$ be a basis vector in $\bigwedge^\ell \fh$, with $i_1<i_2<\dots<i_\ell$. There are two cases.

The first case is when $i_1\ge 2.$ Then:
\begin{align*}
(\al_s^\vee\al_s)&\cdot (y_{i_1}\wedge y_{i_2}\wedge\dots\wedge y_{i_\ell})=0,
\end{align*}
and so $\tau_s\cdot (y_{i_1}\wedge y_{i_2}\wedge\dots\wedge y_{i_\ell})= y_1\wedge y_{i_2}\wedge\dots\wedge y_{i_\ell}$.
On the other hand, 
\begin{align*}
s(y_{i_1})\wedge s(y_{i_2})\wedge\dots \wedge s(y_{i_\ell})&= y_{i_1}\wedge y_{i_2}\dots\wedge y_{i_\ell}.\end{align*}

In the second case, $i_1=1.$ Then:
\begin{align*}
(\al_s^\vee\al_s)\cdot (y_{1}\wedge y_{i_2}\wedge\dots\wedge y_{i_\ell})&=\al_s^\vee\cdot(-2\langle\al_s^\vee,\al_s\rangle  y_{i_2}\wedge\dots\wedge y_{i_\ell})\\
&=-2\langle\al_s^\vee,\al_s\rangle  (y_{1}\wedge y_{i_2}\wedge\dots\wedge y_{i_\ell}),
\end{align*}
and so $\tau_s\cdot (y_{1}\wedge y_{i_2}\wedge\dots\wedge y_{i_\ell})=\lambda_s (y_{1}\wedge y_{i_2}\wedge\dots\wedge y_{i_\ell}).$
On the other hand, now $s(y_{1})\wedge s(y_{i_2})\wedge\dots \wedge s(y_{i_\ell})=\lambda_s(y_{1}\wedge y_{i_2}\wedge\dots \wedge y_{i_\ell}).$

The claim follows.

\end{proof}

\subsection{The Dirac elements}
The Dirac element in $\bfH_{t,c}\otimes C(V)$ is:
\begin{equation}
\C D=\C D_x+\C D_y, \text{ where }\C D_x=\sum_{i} x_i\otimes y_i,\quad \C D_y=\sum_{i} y_i\otimes x_i.
\end{equation}

Let $\Delta: W\to \bfH_{t,c}\otimes C(V)$ denote the group homomorphism $w\mapsto w\otimes \tau_w$, see Remark \ref{r:pin-cover}. We also denote by $\Delta$ the map $\bC[W]\to \bfH_{t,c}\otimes C(V)$ that extends linearly $w\mapsto w\otimes \tau_w.$

\begin{proposition}\label{p:square-dirac}
The Dirac elements have the following properties in $\bfH_{t,c}\otimes C(V)$:
\begin{enumerate}
\item $\C D_x, \C D_y,$ and $\C D$ are $W$-invariant, i.e., invariant with respect to the conjugation action of $\Delta(W).$
\item $\C D_x^2=\C D_y^2=0.$
\item The square of the Dirac element $\C D$ equals:
\begin{equation}
\C D^2=-\wti\Omega_{\bfH}-\Delta(\Omega_{W,c}),
\end{equation}
 with
\begin{equation}
\wti\Omega_{\bfH}=\Omega_\bfH\otimes 1-1\otimes \frac 12\kappa_1\in (\bfH_{t,c}\otimes C(V))^W
\end{equation}
and 
 \begin{equation}\label{e:Omega-W-2}
 \Omega_{W,c}=\sum_{s\in \C R} \frac {2c_s}{1-\lambda_s} s\in \bC[W]^W.
 \end{equation}
 The formulas for $\Omega_\bfH\in \bfH^W$ and $\frac 12\kappa_1\in C(V)^W$  are given in (\ref{e:Omega-cherednik}) and (\ref{e:kappa-cherednik}), respectively.
\end{enumerate}
\end{proposition}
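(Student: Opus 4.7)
The plan is to derive all three assertions by direct computation, specializing the general apparatus of Section~2 to the rational Cherednik setting.

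For part (1), observe that the diagonal $W$-action on $V = \fh \oplus \fh^*$ has $\det(w|_V) = \det(w|_\fh)\det(w|_{\fh^*}) = \det(w)\det(w)^{-1} = 1$, so $W \subset SO(V)$ and Lemma~\ref{l:D-inv} gives the $W$-invariance of $\C D$. For $\C D_x$ and $\C D_y$ individually, I would conjugate directly: by Lemma~\ref{l:tau-al}(1) together with the multiplicativity of $\tau:W \to C(V)^\times$ (Remark~\ref{r:pin-cover}), we have $\tau_w\, u\, \tau_w^{-1} = w(u)$ for all $u \in V$, so
\[\Delta(w)\,\C D_x\,\Delta(w)^{-1} = \sum_i w(x_i) \otimes w(y_i).\]
Since $W$ preserves both $\fh$ and $\fh^*$ and the canonical pairing between them, $\{w(x_i)\}$ and $\{w(y_i)\}$ remain dual bases of $\fh^*$ and $\fh$, so the right side equals $\C D_x$; the argument for $\C D_y$ is identical.

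For part (2), write
\[\C D_x^2 = \sum_{i,j} x_i x_j \otimes y_i y_j.\]
In $\bfH_{t,c}$ the elements $x_i \in \fh^*$ commute, while in $C(V)$ the $y_i \in \fh$ anticommute (since $\langle y_i,y_j\rangle = 0$). Swapping the summation indices yields $\C D_x^2 = -\C D_x^2$, forcing $\C D_x^2 = 0$; the same argument handles $\C D_y^2$.

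For part (3), Theorem~\ref{t:D2} gives
\[\C D^2 = -\Omega_\bfH \otimes 1 + 1 \otimes \tfrac{1}{2}\kappa_1 + \Delta(\Omega_{\wti W, \ba}) = -\wti\Omega_\bfH + \Delta(\Omega_{\wti W, \ba}),\]
by the very definition of $\wti\Omega_\bfH$. It remains to identify $\Delta(\Omega_{\wti W, \ba})$ with $-\Delta(\Omega_{W,c})$. In the rational Cherednik setting $W(\ba)\setminus\{1\} = \C R$, and (\ref{e:s-tilde}) identifies $c_{\wti s}\wti s = \tfrac{2c_s}{\lambda_s - 1}\tau_s$ in $C(V)$, so
\[\Delta(\Omega_{\wti W, \ba}) = \sum_{s \in \C R} \frac{2c_s}{\lambda_s - 1}\, s \otimes \tau_s = -\Delta\!\left(\sum_{s \in \C R}\frac{2c_s}{1-\lambda_s}\, s\right) = -\Delta(\Omega_{W,c}),\]
which completes the argument.

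The main obstacle is careful bookkeeping: translating between the canonical double cover $\wti W$ and the concrete embedding $\tau: W \hookrightarrow C(V)^\times$, and keeping signs and scalar factors straight in the identifications of $\kappa_1$ and $\Omega_{\wti W,\ba}$. Beyond that, no ideas are needed past the general theorems of Section~2.
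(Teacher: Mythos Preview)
Your argument is correct and follows the same line as the paper's own proof: part~(1) via Lemma~\ref{l:D-inv} (using $\det_V(w)=1$) and Lemma~\ref{l:tau-al}, part~(2) by the commutation of the $x_i$ in $\bfH_{t,c}$ against the anticommutation of the $y_i$ in $C(V)$, and part~(3) by specializing Theorem~\ref{t:D2} via (\ref{e:s-tilde}). The paper's proof is terser but identical in substance.
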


\begin{proof} Claim (1) follows immediately from Lemma \ref{l:tau-al} or from the general Lemma \ref{l:D-inv} since $\det_V(s)=1$. For (2):
\[\C D_x^2=\sum_{i,j} x_i x_j\otimes y_i y_j =\sum_{i<j} [x_i,x_j]\otimes y_i y_j=0,\]
and similarly for $\C D_y^2.$

Claim (3) is a particular case of Theorem \ref{t:D2} where we use (\ref{e:s-tilde}). 
\end{proof} 

\begin{remark}\label{r:commute-Omega} When $t=0$, we have $\wti\Omega_{\bfH_{0,c}}=\Omega_{\bfH_{0,c}}\otimes 1$ and $\Omega_{\bfH_{0,c}}$ is a central element in $\bfH_{0,c}.$

 When $t\neq 0$, the center of the algebra $\bfH_{t,c}$ consists of the scalars $\bC$ only, \cite[Proposition 7.2]{BG}. In this case, notice that $\wti\Omega_{\bfH_{t,c}}$ commutes with all elements of the form $Z_{\bfH_{t,c}}(\Omega_{\bfH_{t,c}})\otimes 1+1\otimes Z_{C(V)}(\frac 12\kappa_1)$. More interestingly, (\ref{e:h-grading}) and Lemma \ref{e:kappa-grading} imply that
\begin{equation}
[\wti\Omega_{\bfH_{t,c}},x\otimes y]=0\text{ and } [\wti\Omega_{\bfH_{t,c}},y\otimes x]=0,
\end{equation}
for all $y\in\fh$ and $x\in\fh^*.$
\end{remark}

\begin{remark}
Given Proposition \ref{p:square-dirac}, it is important to know the value of the scalar $N_c(\sigma)$ by which $\Omega_{W,c}$ acts in an irreducible representation $\sigma$ of $W$. When $W$ is a finite real reflection group and $c$ is constant, $\Omega_{W,c}=c\sum_{s\in\C R}s$ acts by
\begin{equation}
N_c(\sigma)=c(a(\sigma\otimes\det)-a(\sigma)),
\end{equation} 
where $a(\sigma)$ is Lusztig's $a$-invariant for $\sigma$. This fact was noticed empirically by Beynon-Lusztig \cite{BL} and proved uniformly by Opdam \cite{Op}.
\end{remark}

\subsection{Wreath products} We conclude the section with some remarks about symplectic reflection algebras for a wreath product group. We follow the definitions and notation of \cite{GG}. 

Let $\Gamma$ be a finite group acting on the $2$-dimensional space $L=\bC^2$. Choose a basis $\{x,y\}$ of $L$ and the symplectic form $\omega_L(x,y)=1$. Let $V=L^{\oplus n}$ with the symplectic form $\omega=\omega^{\oplus n}$. For every $u\in L$, denote by $u_i$ the embedding of $u$ into the $i$-th component of $V$. Denote $\Gamma_n=S_n\wr\Gamma.$ 
There are two types of symplectic reflections in $\Gamma_n$:
\begin{enumerate}
\item[$(\Gamma)$] $\gamma_i$, for every $i\in[1,n]$ and every $\gamma\in\Gamma\setminus\{1\}$;
\item[$(S)$] $s_{ij}\gamma_i\gamma_j^{-1}$, for all $i,j\in [1,n]$ and $\gamma\in\Gamma.$
\end{enumerate}

\begin{definition}[{\cite[Lemma 3.1.1]{GG}}] The symplectic reflection algebra $\bfH_{t,k,c}(\Gamma)$ associated to the wreath product $\Gamma_n$ and parameters $t$, $k$, and $\{c_\gamma: \gamma\in\Gamma\setminus\{1\}\}$ is the the quotient of $T(V)\rtimes \Gamma_n$ by the relations
\begin{enumerate}
\item[(R1)] For every $i\in[1,n]$,
\[ [x_i,y_i]=t\cdot 1+\sum_{\gamma\in\Gamma\setminus\{1\}}c_\gamma\gamma_i+\frac k2\sum_{j=1,j\neq i}^n\sum_{\gamma\in\Gamma} s_{ij}\gamma_i\gamma_j^{-1};\]
\item For every $u,v\in L$ and $i\neq j$
\[ [u_i,v_j]=-\frac k2 \sum_{\gamma\in\Gamma} \omega_L(\gamma(u),v) s_{ij}\gamma_i\gamma_j^{-1}.\]
\end{enumerate}

\end{definition}

To construct the Dirac operator, we need to endow $V$ with a $\Gamma$-invariant symmetric bilinear form. The natural construction would be to define the symmetric form $\langle x,y\rangle_L=1$ on $L$  and the symmetric form $\<~,~\>_L^{\oplus n}$ on $V$. If this is the case, then $\Gamma$ must be a subgroup of $SO(2)$, and so $\Gamma=\bZ/r\bZ$. In this situation, the symplectic reflection algebra for $\Gamma_n$ is isomorphic to the rational Cherednik algebra for $\Gamma_n$, so this is a particular case of the previous discussion. If we choose $\{x,y\}$ to be a basis of $L$ such that  every $\gamma\in \Gamma$, $\gamma\neq 1$ acts by
\begin{equation}
\gamma(x)=\lambda_\gamma x,\ \gamma(y)=\lambda_\gamma^{-1} y.
\end{equation}
then the
The Dirac element $\C D\in \bfH_{t,k,c}\otimes C(V)$ is 
\[ \C D=\sum_{i} (x_i\otimes y_i+y_i\otimes x_i).
\]
and its square equals:
\begin{equation}
\C D^2=-\Omega_{\bfH}+1\otimes \frac 12\kappa_1-\Delta(\Omega_{\Gamma_n,k,c}),
\end{equation}
 where
\begin{equation}
\begin{aligned}
\Omega_{\bfH}&=\sum_i(x_iy_i+y_ix_i)-\sum_{\gamma\in\Gamma\setminus\{1\}}c_\gamma\frac{\lambda_\gamma+1}{1-\lambda_\gamma}\sum_{i}\gamma_i\in \bfH_{t,k,c}^{\Gamma_n},\\
\frac 12\kappa_1&=t\sum_{i}(x_i y_i+n)\in C(V)^{\Gamma_n},\\
\Omega_{\Gamma_n,c,k}&=\sum_{\gamma\in\Gamma\setminus\{1\}}\frac{2c_\gamma}{1-\lambda_\gamma}\sum_i\gamma_i-\frac k2\sum_{i\neq j}\sum_\gamma s_{ij}\gamma_i\gamma_j^{-1} \in \bC[\Gamma_n]^{\Gamma_n}.
\end{aligned}
\end{equation}
The diagonal embedding $\Delta: \Gamma_n\to\bfH_{t,k,c}\otimes C(V)$ is defined by $\gamma_i\mapsto\gamma_i\otimes\tau_{\gamma_i}$, $s_{ij}\gamma_i\gamma_j^{-1}\mapsto s_{ij}\gamma_i\gamma_j^{-1}\otimes \tau_{s_{ij}\gamma_i\gamma_j^{-1}}$, where
\begin{equation}
\begin{aligned}
\tau_{\gamma_i}&=\frac{1-\lambda_\gamma}2 y_i x_i+1,\\
\tau_{s_{ij}\gamma_i\gamma_j^{-1}}&=\frac 12 (x_i-\lambda_\gamma x_j)(y_i-\lambda^{-1} y_j)+1.\\
\end{aligned}
\end{equation}

\section{Applications: unitarity, the Calogero-Moser space}

\subsection{A star operation} In \cite{ES}, Etingof and Stoica study unitary $\bfH_{t,c}$ modules with respect to the following star operation. Suppose that $t,c_\al\in \bR$ and let $\bar{ ~}$ denote the complex conjugation of $\fh$ and $\fh^*$ with respect to the real span of the coroots and roots, respectively. Let $\iota: V\to V$ denote the isomorphism induced by the symmetric bilinear form $\langle~,~\rangle$. More precisely,  set 
\[\iota(x)=\sum_i\langle x,y_i\rangle y_i,\ \iota(y)=\sum_i\langle y,x_i\rangle x_i, \quad x\in\fh^*,\ y\in\fh.\]
The star operation $\star$ is the anti-linear involutive anti-automorphism defined on generators by
\begin{equation}
w^\star=w^{-1},\quad v^\star=\overline{\iota(v)},\ \quad w\in W,\ v\in V.
\end{equation}
In particular, if we choose the basis $\{x_i\}$ of $\fh^*$ to lie in the real span of roots and let $\{y_i\}$ be the dual basis, we have
\begin{equation}
x_i^\star=y_i,\quad y_i^\star=x_i.
\end{equation}
We will assume implicitly from now on that the bases are of this form.

\

Define in $C(V)$ a star operation, denoted by $*$:
\begin{equation}
v^*=-\overline{\iota(v)}, \quad v\in V,
\end{equation}
which we extend to an anti-automorphism. With this definition, the spin module $S$ has a positive definite $*$-invariant hermitian form. Indeed, realizing $S$ as before on $\bigwedge\fh$, let $I=(i_1,\dots,i_k)$ be a set of indices written in increasing order and denote by $y_I:=y_{i_1}\wedge\dots\wedge y_{i_k}$ the corresponding basis element of $\bigwedge h.$ The form on $S$ is defined by
\begin{equation}
(y_I,y_J)_S:=\delta_{I,J},
\end{equation}
for any two multi-indices $I$ and $J$. One can check using the action on $S$ by the $x_i$, $y_i$'s that
\[(y_i\cdot y_I,y_J)_S=-(y_I,x_i\cdot y_J)_S,\]
which shows that the form $(~,~)_S$ is indeed $*$-invariant. This makes $S$ into a $*$-unitary $C(V)$-module. 

The star operations on $\bfH_{t,c}$ and $C(V)$ endow $\bfH_{t,c}\otimes C(V)$ with a star operation which we denote $\star$ too.  The following lemma is then clear.

\begin{lemma} The Dirac elements satisfy $\C D_x^\star=-\C D_y$ and $\C D_y^\star=-\C D_x.$ Therefore, $\C D$ is skew-adjoint with respect to $\star$, i.e., $$\C D^\star=-\C D.$$
\end{lemma}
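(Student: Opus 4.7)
The plan is to verify the two identities $\C D_x^\star = -\C D_y$ and $\C D_y^\star = -\C D_x$ by a direct computation from the definitions of the two star operations; the claim $\C D^\star = -\C D$ then follows by adding. First I would unpack the formulas on our chosen real bases. Since $\{x_i\}$ and $\{y_i\}$ are dual bases with the $x_i$ in the real span of roots, complex conjugation is trivial on them and the isomorphism $\iota$ induced by $\langle~,~\rangle$ exchanges them. So the defining relations $v^\star=\overline{\iota(v)}$ on $\bfH_{t,c}$ and $v^{*}=-\overline{\iota(v)}$ on $C(V)$ specialize to
\begin{equation*}
x_i^\star=y_i,\quad y_i^\star=x_i,\quad x_i^{*}=-y_i,\quad y_i^{*}=-x_i.
\end{equation*}

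Next I would record how $\star$ is extended to the tensor product. Since $\star$ on $\bfH_{t,c}$ and $*$ on $C(V)$ are both $\bC$-antilinear anti-automorphisms, the combined map on $\bfH_{t,c}\otimes C(V)$ is defined by $(a\otimes b)^\star=a^\star\otimes b^{*}$ \emph{without} swapping the factors, and the anti-multiplicativity check is immediate because the two swaps cancel. With these two ingredients in hand, the identities come out in one line each:
\begin{align*}
\C D_x^\star &= \sum_i (x_i\otimes y_i)^\star=\sum_i x_i^\star\otimes y_i^{*}=\sum_i y_i\otimes(-x_i)=-\C D_y,\\
\C D_y^\star &= \sum_i (y_i\otimes x_i)^\star=\sum_i y_i^\star\otimes x_i^{*}=\sum_i x_i\otimes(-y_i)=-\C D_x,
\end{align*}
and therefore $\C D^\star=\C D_x^\star+\C D_y^\star=-\C D_y-\C D_x=-\C D$.

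There is no real obstacle here: the lemma is essentially a sign bookkeeping, and the only point that deserves a sentence of justification is the convention on the tensor product star, namely that no swap is required because both factors already carry anti-involutions. The asymmetry of signs between the two stars (no sign on $\bfH_{t,c}$, a minus sign on $C(V)$) is precisely what produces the overall skew-adjointness of $\C D$.
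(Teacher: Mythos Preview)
Your argument is correct and is exactly the direct verification the paper has in mind when it declares the lemma ``clear''; the paper gives no proof beyond that remark, and your one-line computations on the chosen real dual bases, together with the observation that the tensor-product star acts factorwise without a swap, are precisely what is needed.
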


\subsection{Unitary $\bfH_{t,c}$-modules}
Suppose that $M$ is a simple $\bfH_{t,c}$-module endowed with a $\star$-invariant hermitian form $(~,~)_M$. Then, we may define the $\star$-invariant product form on the $\bfH_{t,c}\otimes C(V)$-module $M\otimes S$:
\begin{equation}\label{e:prod-form}
(m_1\otimes s_1,m_2\otimes s_2)=(m_1,m_2)_M(s_1,s_2)_S,
\end{equation}
and it is extended sesquilinearly.

We have the following easy criterion.

\begin{proposition}\label{p:unit-criterion}
Suppose that $M$ is a  $\star$-unitary $\bfH_{t,c}$-module. Then:
\begin{equation}
(\C D^2a,a)\le 0, \text{ for all } a\in M\otimes S.
\end{equation} 
\end{proposition}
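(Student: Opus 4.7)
The plan is to exploit the skew-adjointness $\C D^\star = -\C D$ established in the preceding lemma, combined with the positivity of the product Hermitian form on $M\otimes S$, to reduce the inequality to the fact that $-\|\C D a\|^2\leq 0$.

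First I would verify that the sesquilinear form $(\,,\,)$ defined in (\ref{e:prod-form}) is indeed $\star$-invariant for the star operation on $\bfH_{t,c}\otimes C(V)$, i.e.\ that $((h\otimes c)a,b) = (a,(h\otimes c)^\star b)$ for all simple tensors $h\otimes c$ and all $a,b\in M\otimes S$. This is immediate from the definitions: the star on $\bfH_{t,c}\otimes C(V)$ is built multiplicatively from $\star$ on $\bfH_{t,c}$ and $*$ on $C(V)$, and both factors of the product form are invariant under their respective operations. Next, since $M$ is assumed $\star$-unitary, $(\,,\,)_M$ is positive definite, and $(\,,\,)_S$ is positive definite by the computation recalled just before the proposition; hence the product form on $M\otimes S$ is positive definite.

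Now comes the one-line computation. For any $a\in M\otimes S$,
\begin{equation*}
(\C D^2 a, a) \;=\; (\C D a,\C D^\star a) \;=\; (\C D a,-\C D a) \;=\; -(\C D a,\C D a) \;\leq\; 0,
\end{equation*}
using $\star$-invariance in the first equality, the identity $\C D^\star=-\C D$ in the second, and positive definiteness in the last.

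There is essentially no obstacle here; the substantive content has been packed into the preceding lemma (computing $\C D^\star$) and into the construction of the $*$-unitary structure on $S$. The only thing to be slightly careful about is the sesquilinearity convention, but either convention produces the same sign since $\C D$ is skew-adjoint and the form is Hermitian.
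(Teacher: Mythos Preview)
Your proof is correct and essentially identical to the paper's: both use skew-adjointness of $\C D$ and positivity of the product form to get $(\C D^2 a,a)=-(\C D a,\C D a)\le 0$. The paper's argument is the one-line version of what you wrote, with the preliminary remarks about $\star$-invariance and positive definiteness of the product form compressed into the sentence ``Since $M$ and $S$ are both unitary, so is $M\otimes S$.''
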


\begin{proof}
Since $M$ and $S$ are both unitary, so is $M\otimes S.$ Then 
\[0\le (\C Da,\C Da)=(\C D^\star \C D a,a)=-(\C D^2a,a).\]
\end{proof}

When $M$ is $\star$-unitary, the definition of Dirac cohomology becomes easier. Since $\C D$ is skew-adjoint, it follows that $\ker D_X\cap \im D_X=0$, and therefore $$H_D(X)=\ker D_X=\ker D_X^2.$$ Moreover, suppose $m\in\ker D_X$. Then $\C D_x m=-\C D_y m$ and 
\[(\C D_x m,\C D_x m)=(\C D_x m,-\C D_y m)=(\C D_x^2 m, m)=0.\]
If $M$ is unitary, it follows that $\C D_xm=0$. This argument shows that $\ker D_X=\ker \C D_x\cap \ker \C D_y$ if $M$ is unitary.

\subsection{Category $\CO$ for $\bfH_{1,c}$}

Set $t=1$. The categories $\CO$ for rational Cherednik algebras were introduced by Ginzburg, Guay, Opdam, and Rouquier in \cite{GGOR}, where their main properties are established as well.


The simple modules $V_{\delta,\sigma}$ for the semidirect product $S(\fh)\rtimes \bC[W]$ are labelled by  $\delta\in \fh^*$ and  the irreducible representations $(\sigma,V_\sigma)$ of the isotropy group $W_\delta$ of $\delta$ in $W$, and are obtained by Mackey induction. Furthermore, define the induced standard $\bfH_{1,c}$-module:
\begin{equation}
M(\delta,\sigma)=\bfH_{1,c}\otimes_{S(\fh)\rtimes \bC[W]} V_{\delta,\sigma}.
\end{equation}
It is clear that as a left $S(\fh^*)$-module, $M(\delta,\sigma)$ is isomorphic to $S(\fh^*)\otimes V_{\delta,\sigma}$.

\smallskip

Restrict to the case $\delta=0$, and denote the standard module by $M(\sigma).$ Let $L(\sigma)$ be its unique simple quotient.

\smallskip

Denote the $W$-character
\begin{equation}\label{e:epsilon}
\varepsilon:={\det}_{\fh}: W\to \bC^\times.
\end{equation}

We would like to understand the action of $\C D$ and $\C D^2$ on $M(\sigma)\otimes S$. From the formula for $\C D^2$ and the action of $\tau_s$ on $S$, it is apparent that the action of $\C D^2$ (Proposition \ref{p:square-dirac}) preserves the subspaces 
\[\C M_{k,\ell}(\sigma):= (S^k(\fh^*)\otimes V_{\sigma})\otimes {\bigwedge}^{\ell}\fh,\quad\text{for all } k\ge 0,\ 0\le\ell\le n.\]
Using formula (\ref{e:Omega-cherednik}), we see that for every $f\otimes v\in S^k(\fh^*)\otimes V_\sigma$:
\begin{equation}\label{h-action}
\Omega_{\bfH}(f\otimes v)=[\Omega_{\bfH},f]\otimes v+f\Omega_{\bfH}\otimes v=(2k+ n-N_c(\sigma)) f\otimes v,
\end{equation}
where $N_c(\sigma)$ is the scalar by which the central element $\Omega_{W,c}$ from (\ref{e:Omega-W-2}) acts on $V_\sigma$ under the representation $\sigma$. We also used here that $y_i$ acts by $0$ on $V_\sigma$.

By Lemma \ref{l:W-action}, the diagonal action of $W$ on $M(\sigma)\otimes S$ preserves degrees and on $\C M_{k,\ell}(\sigma)$ it is  the natural diagonal action of $W$ on 
\[S^k(\fh^*)\otimes V_\sigma \otimes {\bigwedge}^{\ell}\fh.\]

\begin{proposition}\label{p:scalar-action}
Let $\mu$ be an irreducible $W$-representation. If the $\mu$-isotypic component of $S^k(\fh^*)\otimes V_{\sigma}\otimes {\bigwedge}^{\ell}\fh$ (under the natural diagonal action of $W$) is nonzero, the square of the Dirac element $\C D^2$ acts on it by the scalar
\begin{equation}\label{D2-scalar}
-2(k+n-\ell)+N_c(\sigma)-N_c(\mu).
\end{equation}
\end{proposition}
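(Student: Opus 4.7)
The plan is to evaluate the action of each term in the decomposition of $\C D^2$ on the stated isotypic component, using the formulas already established, and then add them up.

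By Proposition \ref{p:square-dirac}(3), the Dirac square satisfies
\[
\C D^2 = -\wti\Omega_\bfH - \Delta(\Omega_{W,c}) = -\Omega_\bfH\otimes 1 + 1\otimes \tfrac12\kappa_1 - \Delta(\Omega_{W,c}).
\]
I would check that each of the three summands preserves $\C M_{k,\ell}(\sigma)$ and acts there by a scalar, and compute them one at a time.

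First, for $\Omega_\bfH\otimes 1$: formula (\ref{h-action}) gives that $\Omega_\bfH$ acts on $S^k(\fh^*)\otimes V_\sigma \subset M(\sigma)$ by the scalar $2k+n-N_c(\sigma)$, so $\Omega_\bfH\otimes 1$ acts on all of $\C M_{k,\ell}(\sigma)$ by this same scalar. Next, for $1\otimes \tfrac12\kappa_1$: the lemma preceding (\ref{e:tau-s}) shows that $\tfrac12\kappa_1$ acts on $\bigwedge^\ell\fh\subset S$ by $t(-n+2\ell)$; since we are in the $t=1$ setting here (category $\CO$ for $\bfH_{1,c}$), this is simply $-n+2\ell$.

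For the third term, the key point is that by Lemma \ref{l:W-action}, the Clifford-side action of $\tau_w$ on $\bigwedge^\ell\fh$ coincides with the natural $W$-action on $\bigwedge^\ell\fh$. Consequently the diagonal $\Delta(W)$-action on $\C M_{k,\ell}(\sigma) = S^k(\fh^*)\otimes V_\sigma \otimes \bigwedge^\ell\fh$ is exactly the natural tensor-product $W$-action (this is the content of the paragraph just preceding the statement), and hence $\Delta(\Omega_{W,c})$ acts on the $\mu$-isotypic component of $\C M_{k,\ell}(\sigma)$ by the scalar $N_c(\mu)$ by which the central element $\Omega_{W,c}\in \bC[W]^W$ acts on $V_\mu$.

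Summing: $\C D^2$ acts on the $\mu$-isotypic component of $\C M_{k,\ell}(\sigma)$ by
\[
-(2k+n-N_c(\sigma)) + (-n+2\ell) - N_c(\mu) = -2(k+n-\ell)+N_c(\sigma)-N_c(\mu),
\]
which is the claimed formula. There is no real obstacle here: everything reduces to bookkeeping once Lemma \ref{l:W-action} is invoked to identify the two $W$-actions on $\bigwedge^\ell\fh$; the only point to be careful about is the sign of $\tfrac12\kappa_1$ (it enters $\wti\Omega_\bfH$ with a minus sign, so it enters $\C D^2 = -\wti\Omega_\bfH - \Delta(\Omega_{W,c})$ with a plus sign) and the specialization $t=1$ in the scalar for $\tfrac12\kappa_1$.
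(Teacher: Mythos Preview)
Your proof is correct and follows exactly the paper's approach: decompose $\C D^2$ via Proposition~\ref{p:square-dirac}(3), plug in the scalar \eqref{h-action} for $\Omega_\bfH\otimes 1$, the scalar $-(n-2\ell)$ for $1\otimes\tfrac12\kappa_1$ (with $t=1$), and $N_c(\mu)$ for $\Delta(\Omega_{W,c})$ on the $\mu$-isotypic piece. The paper's proof is simply a one-line pointer to these same ingredients; you have spelled out the bookkeeping in full, including the justification via Lemma~\ref{l:W-action} that the diagonal $\Delta(W)$-action agrees with the natural one.
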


\begin{proof}
This is immediate from the formula for $\C D^2$ (Proposition \ref{p:square-dirac}) since the action of $\Omega_\bfH\otimes 1$ is given by the scalar (\ref{h-action}) and the action of $1\otimes\frac 12\kappa_1$ is by $-(n-2\ell).$
\end{proof}

The classical construction of contravariant forms on Verma modules of a semisimple Lie algebra, can be adapted to this setting to show that $M(\sigma)$ admits a hermitian invariant form (see \cite{ES}). 

\begin{corollary} Let $\sigma$ be an irreducible $W$-module.
\begin{enumerate}
\item Suppose that $M(\sigma)$ is irreducible. Then $M(\sigma)$ is $\star$-unitary only if
\begin{equation}
N_c(\sigma)-N_c(\mu)\le 2(k+\ell), 
\end{equation}
for  all irreducible $W$-representations $\mu$ that appear in the natural diagonal action of $W$ on $S^k(\fh^*)\otimes V_\sigma\otimes {\bigwedge}^\ell\fh,$ $k\ge 0$, $0\le \ell\le n$.
\item The simple module $L(\sigma)$ is unitary only if
\begin{equation}\label{L-unit-ineq}
N_c(\sigma)-N_c(\mu)\le 2\ell, 
\end{equation}
for  all irreducible $W$-representations $\mu$ that appear in the natural diagonal action of $W$ on $\sigma\otimes {\bigwedge}^\ell\fh,$  $0\le \ell\le n$.
\end{enumerate}
\end{corollary}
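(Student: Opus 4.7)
The plan is to combine the Dirac positivity criterion of Proposition \ref{p:unit-criterion} with the explicit eigenvalue computation of Proposition \ref{p:scalar-action}.

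For part (1), assume $M(\sigma)$ is $\star$-unitary. The spin module $S$ is always $*$-unitary via the form $(\cdot,\cdot)_S$ on $\bigwedge\fh$, so the tensor form (\ref{e:prod-form}) on $M(\sigma)\otimes S$ is positive definite and $\star$-invariant. Proposition \ref{p:square-dirac}(2) gives $\C D^2=\C D_x\C D_y+\C D_y\C D_x$, and each summand preserves the bigrading $\C M_{k,\ell}(\sigma)=S^k(\fh^*)\otimes V_\sigma\otimes\bigwedge^\ell\fh$, so $\C D^2$ acts block-wise along this bigrading. The diagonal $\Delta(W)$-action preserves each $\C M_{k,\ell}(\sigma)$ by Lemma \ref{l:W-action}, and since the tensor hermitian form is $W$-invariant, distinct $W$-isotypic components in $\C M_{k,\ell}(\sigma)$ are mutually orthogonal. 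By Proposition \ref{p:scalar-action}, $\C D^2$ then acts by a single scalar on each $\mu$-isotypic piece, and Proposition \ref{p:unit-criterion} forces this scalar to be nonpositive whenever the piece is nonzero, which is the content of the desired inequality (modulo the natural reindexing of the exterior power, see the last paragraph below).

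For part (2), when $L(\sigma)$ is $\star$-unitary it carries a positive definite $\star$-invariant hermitian form, hence so does $L(\sigma)\otimes S$. The key observation is that $L(\sigma)$ is cyclically generated by the image of the degree-zero part $V_\sigma\subset M(\sigma)$, so that image is nonzero in $L(\sigma)$, and the inclusion $V_\sigma\otimes\bigwedge^\ell\fh\hookrightarrow L(\sigma)\otimes S$ yields a nonzero subspace on which the same orthogonal $W$-isotypic decomposition applies; but now only with $k=0$, since higher-degree pieces of $S(\fh^*)\otimes V_\sigma$ may be killed in passing to the simple quotient. Proposition \ref{p:scalar-action} specialized to $k=0$ combined with the positivity inequality of Proposition \ref{p:unit-criterion} then produces the claimed bound for the $\mu$-isotypic components in $\sigma\otimes\bigwedge^\ell\fh$.

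The main obstacle is a bookkeeping one rather than a conceptual one: Proposition \ref{p:scalar-action} outputs the scalar $-2(k+n-\ell)+N_c(\sigma)-N_c(\mu)$, which must be matched to the stated bounds $2(k+\ell)$ and $2\ell$. This is achieved via the $W$-equivariant duality $\bigwedge^\ell\fh\cong(\bigwedge^{n-\ell}\fh)^*\otimes\varepsilon$ with the character $\varepsilon=\det_\fh$ from (\ref{e:epsilon}), which re-indexes the exterior power by $\ell\leftrightarrow n-\ell$ and, where relevant, twists the set of admissible $\mu$'s by $\varepsilon$. Once this relabeling is in place, both inequalities are immediate consequences of the two cited propositions.
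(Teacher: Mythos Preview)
Your approach is exactly the paper's: combine Proposition~\ref{p:unit-criterion} with the scalar formula of Proposition~\ref{p:scalar-action}, and for part~(2) use that $V_\sigma$ survives in $L(\sigma)$. The extra detail you supply (block structure of $\C D^2$ along the bigrading, orthogonality of $W$-isotypic pieces) is correct and matches what the paper's one-line proof leaves implicit.

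You are also right to flag the mismatch between the scalar $-2(k+n-\ell)+N_c(\sigma)-N_c(\mu)$ from Proposition~\ref{p:scalar-action} and the stated bounds $2(k+\ell)$, $2\ell$; the paper's proof does not comment on this. However, your proposed duality fix does not actually close the gap. The isomorphism $\bigwedge^\ell\fh\cong(\bigwedge^{n-\ell}\fh)^*\otimes\varepsilon$ re-indexes the exterior degree, but it does \emph{not} identify the irreducible constituents of $\sigma\otimes\bigwedge^\ell\fh$ with those of $\sigma\otimes\bigwedge^{n-\ell}\fh$: even for real reflection groups these two sets differ by tensoring with $\varepsilon=\sgn$, and in general $N_c(\mu)\neq N_c(\mu\otimes\varepsilon)$ (for constant $c$ one has $N_c(\triv)=-N_c(\sgn)\neq 0$). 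So the substitution $\ell\mapsto n-\ell$ changes the hypothesis on $\mu$, not merely the bound, and you cannot recover the corollary as literally printed. What the two propositions directly yield is
\[
N_c(\sigma)-N_c(\mu)\le 2(k+n-\ell)\quad\text{(resp.\ }\le 2(n-\ell)\text{)}
\]
for $\mu$ occurring in $S^k(\fh^*)\otimes V_\sigma\otimes\bigwedge^\ell\fh$ (resp.\ in $\sigma\otimes\bigwedge^\ell\fh$); the discrepancy with the printed statement is an indexing slip in the paper, not something the exterior-power duality can repair.
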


\begin{proof}
Apply the unitary criterion in Proposition \ref{p:unit-criterion} and use formula (\ref{D2-scalar}) for the action of $\C D^2$. For (2), use that $\sigma$ occurs in the restriction of $L(\sigma)$ to $W$.
\end{proof}

\begin{remark}
In the case when $\ell=1$ and the parameter function $c$ is constant, formula (\ref{L-unit-ineq}) recovers \cite[Corollary 3.6]{ES}.
\end{remark}

\subsection{Dirac cohomology in category $\CO$}  Every module in category $\CO$ is $\Omega_\bfH$-admissible  in the sense of Definition \ref{d:coh}, and so the notion of Dirac cohomology makes sense. If $f\otimes v$ is a simple tensor element of $M(\sigma)$ where $f\in S(\fh^*)$ and $v\in V_\sigma$ and $p\in S$, then the Dirac operator acts as
\[D_{M(\sigma)}(f\otimes v\otimes p)=\sum_i x_if\otimes v\otimes y_ip+\sum_i y_if\otimes v\otimes x_i p.\]
If we take $f=1$ and $p^{(n)}\in {\bigwedge}^n\fh$, then it is immediate that
$D_{M(\sigma)}(1\otimes v\otimes p^{(n)})=0$. This implies that 
\begin{equation}
1\otimes \sigma\otimes {\bigwedge}^n\fh\subset \ker D_{M(\sigma)}.
\end{equation}

\smallskip

Conversely, suppose $\wti f$ is an element in the isotypic component of an irreducible $W$-representation $\mu$ in $\ker D_X$ coming from the natural $W$-action on an $S^k(\fh^*)\otimes V_\sigma\otimes{\bigwedge}^\ell\fh^*$ . Then applying Proposition \ref{p:scalar-action} we see that
\begin{equation}\label{e:sigma-mu}
N_c(\sigma)-N_c(\mu)=2(k+n-\ell).
\end{equation}
In particular, suppose that $\mu=\sigma$. Then this equation implies that $k=\ell-n$, which can only be satisfied if $k=0$ and $\ell=n$. This means that the only copy of $\sigma\otimes\varepsilon$ in $\ker D_{M(\sigma)}$ (as a $\tau(W)$-representation) is $1\otimes \sigma\otimes {\bigwedge}^n\fh.$

\begin{proposition}\label{p:coh-verma} For every $\sigma\in \Irr(W)$, we have
\[\dim\Hom_{W}[\sigma\otimes\varepsilon, H_D(M(\sigma))]=1.
\]
In particular, $H_D(M(\sigma))\neq 0.$
\end{proposition}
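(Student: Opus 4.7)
My plan is to split the equality into a lower and an upper bound on the multiplicity, combining the explicit subspace exhibited just before the proposition with a bigrading argument to ensure that it survives in Dirac cohomology.

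For the lower bound, the preceding paragraph has already shown $1\otimes V_\sigma\otimes \bigwedge^n\fh \subset \ker D_{M(\sigma)}$, and by Lemma~\ref{l:W-action} this subspace is isomorphic, as a $\tau(W)$-representation, to $V_\sigma\otimes \bigwedge^n\fh \cong \sigma\otimes \varepsilon$. What remains for this half is to confirm that the copy descends to a nonzero class in $H_D(M(\sigma))$. For this I would use the bigrading of $M(\sigma)\otimes S\cong S(\fh^*)\otimes V_\sigma\otimes \bigwedge\fh$ by pairs $(k,\ell)=(\deg_{S(\fh^*)}, \deg_{\bigwedge\fh})$. Inspecting how each tensor factor acts, $\C D_x=\sum_i x_i\otimes y_i$ shifts $(k,\ell)$ by $(+1,+1)$ and $\C D_y=\sum_i y_i\otimes x_i$ shifts it by $(-1,-1)$ (the latter because $y_i$ acts as a degree $-1$ Dunkl-type operator on $S(\fh^*)\otimes V_\sigma$), so $\C D$ preserves the difference $k-\ell$. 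The value $k-\ell=-n$ is the minimum attained, and the corresponding slice is exactly $V_\sigma\otimes \bigwedge^n\fh$, on which both $\C D_x$ and $\C D_y$ vanish for trivial degree reasons ($\bigwedge^{n+1}\fh=0$ and $S^{-1}(\fh^*)=0$). Hence $\im D_{M(\sigma)}$ meets this slice in zero, and the explicit copy represents a nonzero class in $H_D(M(\sigma))$.

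For the upper bound, I would appeal to the uniqueness argument already sketched in the paragraph preceding the proposition: any $(\sigma\otimes\varepsilon)$-isotypic subspace of $\ker D_{M(\sigma)}$ lying inside $S^k(\fh^*)\otimes V_\sigma\otimes \bigwedge^\ell\fh$ is automatically contained in $\ker D^2$, so the scalar formula of Proposition~\ref{p:scalar-action} applied to that isotypic component must vanish, which gives the constraint~(\ref{e:sigma-mu}) and forces $(k,\ell)=(0,n)$. Therefore the only $(\sigma\otimes\varepsilon)$-isotypic subspace of $\ker D_{M(\sigma)}$ is the one from the first half, of multiplicity one; combined with the lower bound this yields the claimed equality.

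The main subtlety I anticipate lies in the uniqueness step, where one must match the $\mu$-isotypic framework of Proposition~\ref{p:scalar-action} carefully to the $(\sigma\otimes\varepsilon)$-component in question and extract from (\ref{e:sigma-mu}) the single possibility $(k,\ell)=(0,n)$. The construction and the survival in $H_D(M(\sigma))$ via the $k-\ell$ grading are comparatively direct once the bigraded structure of $M(\sigma)\otimes S$ is in place.
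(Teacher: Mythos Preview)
Your proposal is correct and matches the paper's proof essentially line for line: the upper bound is exactly the discussion preceding the proposition (which the paper also invokes verbatim), and your $k-\ell$ grading argument is a rephrasing of the paper's observation that $D_{M(\sigma)}$ sends $S^k(\fh^*)\otimes V_\sigma\otimes\bigwedge^\ell\fh$ into the sum of the $(k+1,\ell+1)$ and $(k-1,\ell-1)$ pieces, from which the $(0,n)$-component avoids the image for the same degree reasons you give.
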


\begin{proof}
The previous discussion shows that $\dim\Hom_{W}[\sigma\otimes\varepsilon, \ker D_{M(\sigma)}]=1$, so we only need to show that $1\otimes V\otimes {\bigwedge}^n\fh\not\subset \im D_{M(\sigma)}.$ From the definition of $D_{M(\sigma)}$, we see that
\begin{equation}
D_{M(\sigma)}(S^k(\fh^*)\otimes V_\sigma\otimes{\bigwedge}^\ell\fh\subseteq \left( S^{k+1}(\fh^*)\otimes V_\sigma\otimes{\bigwedge}^{\ell+1}\fh\right)+\left(S^{k-1}(\fh^*)\otimes V_\sigma\otimes{\bigwedge}^{\ell-1}\fh\right),
\end{equation}
where $S^{-1}(\fh^*)=0$ by convention. The claim follows.
\end{proof}

\subsection{Vogan's Dirac morphism: case $t=1$} When $t\neq 0$, by scaling the parameters $c$ appropriately, it is sufficient to consider the case $t=1$. As in (\ref{e:alg-A}), define $\bfA_{1,c}=Z_{\bfH_{1,c}\otimes C(V)}(\wti \Omega_{\bfH_{1,c}}).$ From Remark \ref{r:commute-Omega}, we know that for every $x\in \fh^*$ and every $y\in \fh$, we have
\[ x\otimes y, y\otimes x\in \bfA_{1,c}.\]
Define $B_c$ to be the subalgebra of $\bfH_{1,c}\otimes C(V)$ generated by:
\begin{equation}
\fh\otimes\fh^*,\ \fh^*\otimes\fh,\ \Delta(\bC[W]).
\end{equation}
In particular, $\C D\in B_c^W$, and so $\wti\Omega_{\bfH}\in B_c^W.$ Apply (\ref{e:spec-morphism}) for $\C B:=Z_{+}(B_c)\subset B^W$, where $Z_{+}(B_c)$ denotes the even part of the center of $B_c$, i.e.,
\[Z_{+}(B_c)=\{z\in B_c: \ep(z)=z,\ [z,b]=0,\text{ for all }b\in B_c\}.\]
Clearly, $\wti\Omega_{\bfH_{1,c}}\in Z_+(B_c).$ Then, the general results about the Vogan's Dirac morphism $\zeta$ give a canonical morphism:
\begin{equation}
\zeta^*_{1,c}: \Irr(W)\to \Spec Z_{+}(B_c).
\end{equation}

\begin{example}
Suppose the root system is of type $A_1$, $\fh=\< y\>$, $\fh^*=\< x\>$, $W=\{1,s\}$ and $\langle y,x\rangle=1$. Denote $\wti x=x\otimes y$, $\wti y=y\otimes x$, and $\wti\Omega:=\wti\Omega_{1,c}$ and identify $\Delta(s)$ with $s$. Then $B_c$ is generated by $\wti x$, $\wti y$, $s$, and $\wti\Omega$ subject to the relations
\begin{equation}
\begin{aligned}
&s^2=1,\ s\wti x s=\wti x,\ s\wti y s=\wti y;\\
&[\wti\Omega,\wti x]=[\wti\Omega,\wti y]=[\wti\Omega,s]=0;\\
&\wti x^2=0,\ \wti y^2=0;\\
&\wti x\wti y+\wti y \wti x=-\wti\Omega-c s.
\end{aligned}
\end{equation}
In this case, $Z_+(B_c)=Z(B_c)=\bC\<\wti\Omega,s\>$, so we may identify $\Spec Z(B_c)=\bC\times \bZ/2\bZ$. If $\lambda\in \bC$ and $\sigma\in\Irr(W)$, consider the finite algebra $B_{c,\lambda,\sigma}=B_c/\<\wti\Omega-\lambda,s-\sigma(s)\>.$ (Of course, $\sigma(s)\in\{\pm 1\}$.) This is a $4$-dimensional algebra generated by $\wti x$, $\wti y$ subject to relations
\begin{equation}
\wti x^2=\wti y^2=0,\quad \wti x\wti y+\wti y\wti x=-\lambda-c\sigma(s).
\end{equation}
Thus, $B_{c,\lambda,\sigma}$ is isomorphic to the Clifford algebra for the two dimensional space $\bC\<x,y\>$ for the symmetric bilinear form with matrix $\frac 12\left(\begin{matrix}0 &\lambda+c\sigma(s)\\\lambda+c\sigma(s)&0\end{matrix}\right)$.
The morphism $\zeta_{1,c}^*$ is in this case
\[
\zeta_{1,c}^*: \bZ/2\bZ\to \bC\times \bZ/2\bZ,\quad \sigma\mapsto (-c\sigma, \sigma).
\]
\end{example}

\subsection{Vogan's Dirac morphism: case $t=0$}\label{s:CM} When $t=0$, the algebra $\bfH_{0,c}$ has a large center. By \cite{EG}, the center $Z(\bfH_{0,c})$ contains the subalgebra $\fk m:=S(\fh)^W\otimes S(\fh^*)^W$ and it is a free $\fk m$-module of rank $|W|$. Notice that $\wti\Omega_{\bfH_{0,c}}=\Omega_{\bfH_{0,c}}\otimes 1$ is in fact in $Z(\bfH_{0,c})\otimes 1$, but not in $\fk m\otimes 1$. 

In this case, we have $\bfA_{0,c}=\bfH_{0,c}\otimes C(V)$, so we may choose $\C B=Z(\bfH_{0,c})\otimes 1.$ Thus, we obtain an algebra homomorphism
\begin{equation}\label{e:hom-0}
\zeta_{0,c}: Z(\bfH_{0,c})\to \bC[W]^W,
\end{equation}
and the dual morphism
\begin{equation}\label{e:spec-0}
\zeta_{0,c}^*: \Irr(W)\to \Spec Z(\bfH_{0,c})=X_c(W).
\end{equation}
Here, $X_c(W)$ is the Calogero-Moser space \cite{EG}. The inclusion $\fk m\subset Z(\bfH_{0,c})$ induces a surjective morphism
\begin{equation}\label{e:Y}
\Upsilon: X_c(W)\to \fh^*/W\times \fh/W.
\end{equation}
Let $\fk m_+$ be the augmentation ideal of $\fk m$ and define similarly $S(\fh)^W_+$ and $S(\fh^*)^W_+$.
\begin{theorem}\label{t:CM}
The algebra homomorphism (\ref{e:hom-0}) factors through $Z(\bfH_{0,c})/\fk m_+$:
\[\zeta_{0,c}: Z(\bfH_{0,c})/\fk m_+\to \bC[W]^W,
\]
 and so the dual morphism is
 \begin{equation}
\zeta_{0,c}^*:\Irr(W)\to \Upsilon^{-1}(0).
\end{equation}
\end{theorem}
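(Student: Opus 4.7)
The plan is to reduce the statement to the vanishing $\zeta_{0,c}(f) = 0$ for every $f \in S(\fh)^W_+$ and for every $f \in S(\fh^*)^W_+$. Since $\zeta_{0,c}$ is an algebra homomorphism by Theorem \ref{t:vogan}, such a vanishing forces it to annihilate the ideal of $Z(\bfH_{0,c})$ generated by $\fk m_+$, whence the factorization through $Z(\bfH_{0,c})/\fk m_+$ and, by duality, the stated map $\zeta_{0,c}^* \colon \Irr(W)\to \Upsilon^{-1}(0)$.

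The crucial device will be an auxiliary $\bZ$-grading, the \emph{Euler grading}, on $\bfH_{0,c} \otimes C(V)$, defined by $\deg y = -1$ for $y \in \fh$, $\deg x = +1$ for $x \in \fh^*$, and $\deg w = 0$ for $w \in W$. At $t = 0$, every defining relation of $\bfH_{0,c}$ and of $C(V)$ is compatible with this grading; the decisive check is that the commutator $[y, x] = -\sum_{s \in \C R} c_s \frac{\<y, \al_s\>\<\al_s^\vee, x\>}{\<\al_s^\vee, \al_s\>} s$ has Euler degree $0$ on both sides. Consequently the Dirac element $\C D = \sum_i (x_i \otimes y_i + y_i \otimes x_i)$ is homogeneous of Euler degree $0$, the derivation $d$ preserves the grading, and each $\tau_w$ (being a product of pairs $\al_s^\vee \al_s$ and scalars, cf.\ \eqref{e:tau-s}) lies in Euler degree $0$, so $\Delta(\bC[\wti W])$ is contained in the degree-$0$ part of $\bfH_{0,c} \otimes C(V)$.

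Fix $f \in S^k(\fh)^W$ with $k \geq 1$. Because $f \in Z(\bfH_{0,c})$ commutes with $\C D$, the element $f \otimes 1$ lies in $\ker d_\triv$, and it is homogeneous of Euler degree $-k$. By Theorem \ref{t:ker-dW} we may write $f \otimes 1 = \Delta(\zeta_{0,c}(f)) + d_{\det}(b)$ for some $b \in (\bfH_{0,c} \otimes C(V))^{\wti W,\det}$. Extracting the Euler-degree-$0$ component of this identity, and using that $d_{\det}$ preserves the grading while $\Delta$ lands entirely in degree $0$, one obtains $0 = \Delta(\zeta_{0,c}(f)) + d_{\det}(b_0)$, where $b_0$ is the degree-$0$ part of $b$. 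The directness in Theorem \ref{t:ker-dW}, namely $\Delta(\bC[\wti W]^{\wti W}) \cap \im d_{\det} = 0$, then forces $\Delta(\zeta_{0,c}(f)) = 0$, i.e., $\zeta_{0,c}(f) = 0$. The same argument, with Euler degree $+k$ in place of $-k$, disposes of $f \in S^k(\fh^*)^W$.

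The only potentially delicate step is the bookkeeping verifying that the Euler grading is preserved by all of the relevant structures: the algebra $\bfH_{0,c}$ at $t = 0$, the Clifford algebra $C(V)$, the Dirac element $\C D$, the derivation $d$, and the diagonal embedding $\Delta$. None of these checks is difficult individually, but missing any would break the degree-counting argument; once they are in hand, the rest is entirely formal, and the theorem follows.
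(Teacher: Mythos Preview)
Your proof is correct and takes a genuinely different route from the paper's. The paper argues by refining the filtration induction of section~\ref{s:induction}: for $a\in (S(\fh)\otimes\bigwedge\fh)^W\cap\ker d_\triv$ of positive degree it passes to the associated graded, invokes the Koszul identity~(\ref{e:koszul}), and checks that the Koszul preimage $\bar b$ can itself be chosen in $(S(\fh)\otimes\bigwedge\fh)^W$; lifting $\bar b$ back and iterating yields $S(\fh)^W_+\otimes 1\subset\im d_\triv$ directly. You instead exploit the Euler $\bZ$-grading on $\bfH_{0,c}\otimes C(V)$: once one verifies that $\C D$, $d$, and $\Delta(\bC[\wti W])$ all live in degree~$0$ while $f\otimes 1$ has degree $\pm k\neq 0$, projecting the decomposition of Theorem~\ref{t:ker-dW} onto degree~$0$ and invoking the directness $\im d_{\det}\cap\Delta(\bC[\wti W]^{\wti W})=0$ kills $\zeta_{0,c}(f)$ in one stroke. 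Your argument is shorter and more structural, trading the Koszul bookkeeping for a degree count; the paper's argument, by contrast, avoids having to verify that $\wti W\subset\Pin(V)$ lands in Euler degree~$0$ (a point you rightly flag as the one place needing care, and which goes through because each $\tau_s$ is built from $\al_s^\vee\al_s$). One small remark: the Euler grading on $\bfH_{t,c}$ actually exists for all~$t$ (the term $t\langle y,x\rangle$ is a scalar, hence of degree~$0$); what is genuinely special to $t=0$ in your argument is rather that $S(\fh)^W$ is central, so that $f\otimes 1\in\ker d_\triv$ to begin with.
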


\begin{proof}
In light of the definition of $\zeta_{0,c}$ and Theorem \ref{t:ker-dW}, we need to prove that 
\[S(\fh)^W_+\otimes 1\subset \im d_\triv\text{ and }S(\fh^*)^W_+\otimes 1\subset \im d_\triv,\]
where $d_\triv: (\bfH_{0,c}\otimes C(V))^W\to (\bfH_{0,c}\otimes C(V))^W,$ $d(a)=\C D a-\ep(a)\C D.$ Notice that in this case, $\det_V(\Delta(W))=1$, hence the absence of the $\det_V$ in the target of the map. 

Embed ${\bigwedge\fh}\subset C(V)$ in the natural way. We show first that the argument in section \ref{s:induction} implies
\begin{equation}\label{e:step}
(S(\fh)\otimes {\bigwedge}\fh)^W\cap \ker d_\triv \subset \im d_\triv+(1\otimes ({\bigwedge \fh})^W)\cap \ker d_\triv=\im d_\triv\oplus \bC(1\otimes 1).
\end{equation}
(The last equality follows from Theorem \ref{t:ker-dW}.)
 Let $a\in (S(\fh)\otimes \bigwedge\fh)^W$ have degree $n\ge 1$ and $d_\triv(a)=0$. Then taking graded objects, we have $\bar d'(\bar a)=\bar d_\triv(\bar a)=0$, where $\bar a\in  (S(\fh)\otimes \bigwedge\fh)^W$ regarded in $\bfH_{0,0}\otimes C(V)$ has degree $n$.  Her $\bar d'$ is the Koszul differential from (\ref{e:koszul}). Thus by (\ref{e:koszul}), $\bar a\in\im \bar d'$, i.e., there exists $\bar b\in  (S(V)\otimes C(V))^W$ of degree $n-1$ such that $\bar a=\bar d(\bar b)$. Moreover, notice that in $\bfH_{0,0}\otimes C(V)$ if $\bar f\in S(\fh)$ and $\bar y\in\fh$, then
\begin{equation}
\begin{aligned}
\bar d(\bar f\otimes y)&=\sum_i \bar x_i\bar f\otimes (y_iy+yy_i)+\sum_i\bar f\bar y_i\otimes (x_iy+yx_i)=\sum_i \bar f\bar y_i\otimes (-2\langle x_i,y\rangle)\\
&=-2\bar f\bar y\otimes 1.
\end{aligned}
\end{equation}
Together with the derivation property of $\bar d$, this shows that $\bar b$ can in fact be chosen in $(S(\fh)\otimes \bigwedge\fh)^W$. Proceeding as in section \ref{s:induction}, choose $b=\bar b$ but regarded in  $(\bfH_{0,c}\otimes C(V))^W$ (this is well-defined since $S(\fh)$ is abelian in $\bfH_{0,c}$ too) and then $a-d_\triv(b) \in (S(\fh)\otimes \bigwedge \fh)^W$ of degree at most $n-1$. Also, $d_\triv(a-d_\triv(b))=0.$ The inclusion (\ref{e:step}) follows by induction. 

Finally, from the definition of $d$, it is clear that $\im d\subset (\bC[W]\otimes S(V)_+)\otimes C(V)$, and therefore (\ref{e:step}) implies that $S(\fh)^W_+\otimes 1\subset d_\triv$.

The case of $\fh^*$ is entirely similar.
\end{proof}

\subsection{Finite dimensional modules for $\bfH_{0,c}$} Retain the notation from section \ref{s:CM}. Following \cite{Go}, let us consider the  "baby Verma modules" for $\bfH_{0,c}.$ Define
\[\bar\bfH_{0,c}=\bfH_{0,c}/\fk m_+\bfH_{0,c}\]
This is a finite dimensional algebra of dimension $|W|^3$, isomorphic to $S(\fh^*)_W\otimes S(\fh)_W\otimes \bC[W]$ as a vector space, where we denote by $S(\fh)_W=S(\fh)/S(\fh)^W_+$ the graded algebra of coinvariants and similarly for $S(\fh^*)_W$. 

For every $(\sigma,V_\sigma)\in \Irr(W)$, let
\begin{equation}
\bar M(\sigma)=\bar\bfH_{0,c}\otimes_{S(\fh)_W\rtimes \bC[W]}V_\sigma
\end{equation}
be the baby Verma module induced from $\sigma$. Here $S(\fh)_W$ acts by $0$ on $V_\sigma.$ 

\begin{theorem}[{\cite[Proposition 4.3]{Go}}]
\begin{enumerate}
\item For every $\sigma\in\Irr(W)$, the module $\bar M(\sigma)$ is indecomposable and it has a unique simple quotient $\bar L(\sigma)$.
\item The set $\{\bar L(\sigma):\sigma\in\Irr(W)\}$ gives a complete list of non isomorphic simple $\bar\bfH_{0,c}$-modules.
\end{enumerate}
\end{theorem}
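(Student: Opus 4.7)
The plan is to exploit the $\bZ$-grading on $\bar\bfH_{0,c}$ inherited from $\bfH_{0,c}$, in which $\fh^*$ has degree $+1$, $\fh$ has degree $-1$, and $\bC[W]$ has degree $0$. At $t=0$ the commutator $[y,x]=-\sum_s c_s\frac{\langle y,\al_s\rangle\langle\al_s^\vee,x\rangle}{\langle\al_s^\vee,\al_s\rangle}s$ lies in $\bC[W]$ (degree $0$), and $\fk m_+\bfH_{0,c}$ is homogeneous, so the grading descends. Via the PBW isomorphism $\bar M(\sigma)\cong S(\fh^*)_W\otimes V_\sigma$, the baby Verma module acquires a $\bZ_{\geq 0}$-grading with $V_\sigma$ concentrated in degree $0$.

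For part (1), I would first show that any proper submodule $N\subsetneq\bar M(\sigma)$ satisfies $V_\sigma\cap N=0$: the intersection is a $W$-submodule of the irreducible $V_\sigma$, and $V_\sigma\subseteq N$ would force $N\supseteq \bar\bfH_{0,c}\cdot V_\sigma=\bar M(\sigma)$. Hence the sum of all proper submodules still has zero intersection with $V_\sigma$ and so is itself proper, giving a unique maximal submodule $N_{\max}$ and the unique simple quotient $\bar L(\sigma)$. For indecomposability, a decomposition $\bar M(\sigma)=N_1\oplus N_2$ provides $W$-equivariant projections $\pi_i$; since $V_\sigma$ is $W$-irreducible, one $\pi_i(V_\sigma)$ must vanish, placing all of $V_\sigma$ in the other summand, which then equals $\bar M(\sigma)$.

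For part (2), let $L$ be any (necessarily finite-dimensional) simple $\bar\bfH_{0,c}$-module. The augmentation ideal $S(\fh)_{W,+}$ is nilpotent in $S(\fh)_W$, so $L^\fh:=\{v\in L:\fh\cdot v=0\}$ is nonzero and $W$-stable. Choosing a $W$-irreducible summand $V_\sigma\subseteq L^\fh$, Frobenius reciprocity gives
\[
\Hom_{\bar\bfH_{0,c}}(\bar M(\sigma),L)\cong \Hom_W(V_\sigma,L^\fh)\neq 0,
\]
yielding a surjection $\bar M(\sigma)\twoheadrightarrow L$, whence $L\cong\bar L(\sigma)$ by (1). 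For the non-isomorphism of distinct $\bar L(\sigma)$, the key is to identify $\sigma$ intrinsically as the $W$-module $\bar L(\sigma)^\fh$. The inclusion $V_\sigma\subseteq\bar L(\sigma)^\fh$ is immediate from the grading. Conversely, if $v\in\bar L(\sigma)^\fh$ is homogeneous of degree $k>0$, then iterating $\fh v=0$ gives $S(\fh)_{W,+}\cdot v=0$, so the PBW factorization $\bar\bfH_{0,c}=S(\fh^*)_W\cdot\bC[W]\cdot S(\fh)_W$ forces $\bar\bfH_{0,c}\cdot v=(S(\fh^*)_W\rtimes\bC[W])\cdot v$ to lie in degrees $\geq k>0$, contradicting the simplicity of $\bar L(\sigma)$ since $V_\sigma\subseteq\bar L(\sigma)^0$ is nonzero. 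Any isomorphism $\bar L(\sigma)\cong\bar L(\sigma')$ then induces $V_\sigma\cong V_{\sigma'}$ as $W$-modules.

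The main obstacle is exactly this last PBW calculation $\bar L(\sigma)^\fh=V_\sigma$: while the inclusion $\supseteq$ is formal from the grading, ruling out hidden $\fh$-invariants in positive degree is the only non-trivial step. A subtle point to be careful of is that the positive-degree subspace of $\bar M(\sigma)$ is \emph{not} itself a submodule (the $[y,x]\in\bC[W]$ term lets $\fh$ land back in degree $0$), so one cannot simply identify $N_{\max}$ with $\bar M(\sigma)_{>0}$; however the argument above only requires that $\fh$ act trivially on the vector $v$ itself, not on the submodule it generates, and this suffices to force the action of $S(\fh)_W$ through its constants.
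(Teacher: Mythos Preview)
The paper does not prove this statement; it is cited from \cite[Proposition~4.3]{Go} and used without argument, so there is no proof here to compare against. Your outline follows Gordon's strategy, and most of the pieces are correct, but part~(1) has a genuine gap.

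The unjustified step is ``the sum of all proper submodules still has zero intersection with $V_\sigma$.'' You have shown that each proper $N$ satisfies $N\cap V_\sigma=0$, but this does not pass to sums: two submodules can each miss $V_\sigma$ while their sum contains it, so the sentence assumes exactly what is to be proved. (Your separate indecomposability argument has the same defect---irreducibility of $V_\sigma$ does not force one of the projections $\pi_i|_{V_\sigma}$ to vanish---but that is moot once the unique maximal submodule is established.) A related issue in part~(2) is that you speak of homogeneous elements of $\bar L(\sigma)$ without having first shown that the maximal submodule is graded.

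The repair is to use the $\bZ$-grading more systematically. Work first in the category of \emph{graded} modules: there, proper graded submodules have zero degree-$0$ part and hence lie in $\bigoplus_{k>0}\bar M(\sigma)_k$, so their sum $R(\sigma)$ is still proper, and $\bar L(\sigma):=\bar M(\sigma)/R(\sigma)$ carries a grading by construction; your computation $\bar L(\sigma)^{\fh}=V_\sigma$ is then legitimate. To descend to the ungraded category one shows that every simple $\bar\bfH_{0,c}$-module is gradable: the $\bZ$-grading integrates to a $\bC^\times$-action by algebra automorphisms, and since $\bC^\times$ is connected while the set of isomorphism classes of simples is finite, each simple is isomorphic to all of its twists; Schur's lemma lets one choose the intertwiners coherently to produce a compatible grading. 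It then follows that graded simple modules are simple, that every simple is isomorphic to some $\bar L(\tau)$, and Frobenius reciprocity $\Hom_{\bar\bfH_{0,c}}(\bar M(\sigma),\bar L(\tau))\cong\Hom_W(V_\sigma,V_\tau)$ forces the head of $\bar M(\sigma)$ to be a single copy of $\bar L(\sigma)$.
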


As a consequence (\cite[\S5.4]{Go}), the map 
\begin{equation}\label{e:Theta}
\Theta:\Irr(W)\to \Upsilon^{-1}(0)=\Spec Z(\bar\bfH_{0,c}),
\end{equation}
 given by mapping $\bar M(\sigma)$ to its central character, is surjective.

\smallskip

Consider $\bar D$ the Dirac operator for $\bar H_{0,c}$. The same argument as for Proposition \ref{p:coh-verma} shows that 
\begin{equation}
\dim\Hom_{W}[\sigma\otimes\varepsilon, H_D(\bar M(\sigma))]\neq 0.
\end{equation}
Applying Theorem \ref{t:vogan-conj} (and the remark following it) we are led to the following corollary.

\begin{corollary}\label{c:zeta=theta} The morphism $\zeta^*_{0,c}:\Irr(W)\to \Upsilon^{-1}(0)$ from Theorem \ref{t:CM} is the $\varepsilon$-dual of the morphism $\Theta$ from (\ref{e:Theta}), i.e., $\Theta(\sigma)=\zeta^*_{0,c}(\sigma\otimes\varepsilon)$.
\end{corollary}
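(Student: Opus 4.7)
The plan is to deduce the corollary from Theorem \ref{t:vogan-conj} applied to the baby Verma module $\bar M(\sigma)$, regarded as an $\bfH_{0,c}$-module through the surjection $\bfH_{0,c}\twoheadrightarrow \bar\bfH_{0,c}$. The natural choice of commutative subalgebra is $\C B=Z(\bfH_{0,c})\otimes 1$: since $t=0$ we have $a_1=0$, so by the remark following Theorem \ref{t:vogan-conj} this $\C B$ satisfies the hypotheses of the Vogan morphism (it contains $\wti\Omega_{\bfH_{0,c}}=\Omega_{\bfH_{0,c}}\otimes 1$ and each element commutes with $\C D$), and by Theorem \ref{t:CM} the dual morphism $\zeta^*_{0,c}$ already factors through $\Upsilon^{-1}(0)$.

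The next step is to show that the $\Delta(W)$-representation $\sigma\otimes\varepsilon$ appears in $H_D(\bar M(\sigma))$. I would transfer the argument of Proposition \ref{p:coh-verma} essentially verbatim: as a vector space $\bar M(\sigma)=S(\fh^*)_W\otimes V_\sigma$, the elements $y\in\fh$ annihilate $1\otimes v$ because $\fh$ annihilates $V_\sigma$, and $y_i\wedge p^{(n)}=0$ for any top-degree $p^{(n)}\in\bigwedge^n\fh\subset S$, so both summands of $\C D$ vanish on $1\otimes V_\sigma\otimes\bigwedge^n\fh$. The bidegree bookkeeping from Proposition \ref{p:coh-verma}, namely that $D_{\bar M(\sigma)}$ sends $S^k(\fh^*)_W\otimes V_\sigma\otimes\bigwedge^\ell\fh$ into the sum of the pieces of bidegree $(k+1,\ell+1)$ and $(k-1,\ell-1)$, makes clear that no image can land in bidegree $(0,n)$. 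Thus the subspace injects into $H_D(\bar M(\sigma))$, and by Lemma \ref{l:W-action} its $\Delta(W)$-structure is $V_\sigma\otimes{\det}_\fh=\sigma\otimes\varepsilon$.

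Finally, I would invoke Theorem \ref{t:vogan-conj}. The center $Z(\bar\bfH_{0,c})=Z(\bfH_{0,c})/\fk m_+$ acts on all of $\bar M(\sigma)$ by a single character, namely the central character $\Theta(\sigma)$ of the simple quotient $\bar L(\sigma)$: this is the standard fact that $Z(\bar\bfH_{0,c})$ is diagonalized on the generating subspace $1\otimes V_\sigma$ and then propagates by commutativity through the induced module. Consequently $\C B$ acts on the nonzero $(\sigma\otimes\varepsilon)$-isotypic component of $H_D(\bar M(\sigma))$ via $\Theta(\sigma)$, and Theorem \ref{t:vogan-conj} gives $\Theta(\sigma)=\zeta^*_{0,c}(\sigma\otimes\varepsilon)$, as claimed.

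The core of the proof is thus just an assembly of pieces already in place; the only point that takes a moment to check is the identification of the central character of $\bar M(\sigma)$ with $\Theta(\sigma)$ and the tracking of the $\varepsilon$-twist through Lemma \ref{l:W-action}. Neither step is a genuine obstacle: the first is a standard consequence of the induced module construction, while the second is exactly what Lemma \ref{l:W-action} was set up to record. The rest is direct translation of the Proposition \ref{p:coh-verma} computation to the baby Verma setting, where the coinvariant quotient $S(\fh^*)_W$ still carries the polynomial degree that powers the bidegree bookkeeping.
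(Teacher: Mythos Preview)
Your proof is correct and follows essentially the same route as the paper: the paper's argument is precisely to observe that the computation of Proposition \ref{p:coh-verma} carries over to $\bar M(\sigma)$ to produce a copy of $\sigma\otimes\varepsilon$ in $H_D(\bar M(\sigma))$, and then to invoke Theorem \ref{t:vogan-conj} with $\C B=Z(\bfH_{0,c})\otimes 1$ (the remark following it applying since $a_1=0$). Your write-up is in fact more explicit than the paper's about why $Z(\bfH_{0,c})$ acts on $\bar M(\sigma)$ by the single character $\Theta(\sigma)$ and about how the $\varepsilon$-twist arises from Lemma \ref{l:W-action}, but the underlying argument is the same.
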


The fibers of the map $\Theta$ from (\ref{e:Theta}) are expected to be related to the partition of $\Irr(W)$ into families of representations, in the sense of Lusztig \cite{Lu2} for real reflection groups, and \cite{Ro} for complex reflection groups. More precisely, it has been conjectured by Gordon and Martino \cite{GM} that the partition of $\Irr(W)$ according to the fibers of $\Theta$ refines the partition into Lusztig-Rouquier families, and that, at least for finite reflection groups, the two partitions coincide. This is known to hold in many cases, for example in type $A$ \cite{EG}, and for the class of complex reflection groups $G(m,d,n)$ \cite{GM} and \cite{Be}. 

 We hope that the point of view offered by Theorem \ref{t:CM} and Corollary \ref{c:zeta=theta}, as well as the related methods of the Dirac operator will help in understanding the relation with families of representations and cells in finite reflection groups. 

\begin{example}
Let $W$ be the Weyl group of type $B_2$ and let $\sigma=(11\times 0)$ be the one dimensional $W$-representation where the long reflections act by $-1$ and the short reflections by $1$. Assume that the parameter function $c$ is constant. One can easily  verify that for every $x\in\fh^*$, $y\in\fh$, the commutator $[y,x]\in\bC[W]$ acts by $0$ on $\sigma.$ This means that $\sigma$ can be extended to a simple module $\bar L(\sigma)$ of $\bfH_{0,c}$ by letting both $x$ and $y$ act by $0$. The Dirac operator then acts identically by $0$, and thus the Dirac cohomology equals
\begin{equation}
H_D(\bar L(\sigma))=\sigma\otimes {\bigwedge}\fh=(11\times 0)+(1\times 1)+(0\times 2),
\end{equation}
where $1\times 1$ is the reflection representation and $0\times 2$ is the sign twist of $11\times 0.$ Theorem \ref{t:vogan-conj} implies then that $\zeta^*_{0,c}(11\times 0)=\zeta^*_{0,c}(1\times 1)=\zeta^*_{0,c}(0\times 2)$, and therefore, by Corollary \ref{c:zeta=theta}, $\{ 11\times 0,1\times 1,0\times 2\}$ is contained in the same fiber of $\Theta.$ Of course, $\{ 11\times 0,1\times 1,0\times 2\}$ is the first nontrivial example of a Lusztig family of $W$-representations.
\end{example}

\ifx\undefined\bysame
\newcommand{\bysame}{\leavevmode\hbox to3em{\hrulefill}\,}
\fi


\begin{thebibliography}{30}

 \bibitem[AS]{AS}
 M.~Atiyah, W.~Schmid, 
 \emph{A geometric construction of the discrete series for semisimple Lie groups},
  Invent. Math. {\bf 42} (1977), 1--62.

\bibitem[BCT]{BCT}
D.~Barbasch, D.~Ciubotaru, P.~Trapa,
{\it Dirac cohomology for graded affine Hecke algebras}, Acta Math. {\bf 209} (2012), no. 2, 197--227.

\bibitem[Be]{Be}
G.~Bellamy, 
\emph{The Calogero-Moser partition for G(m,d,n)},
Nagoya Math. J. {\bf 207} (2012), 47--77. 


\bibitem[BG]{BG}
K.A.~Brown, I.~Gordon, 
\emph{Poisson orders, symplectic reflection algebras and representation theory}, 
J. Reine Angew. Math. {\bf 559} (2003), 193--216. 

\bibitem[BL]{BL}
W.~Beynon, G.~Lusztig,
{\it Some numerical results on the characters of exceptional Weyl groups},
Math. Proc. Cambridge Philos. Soc. {\bf 84} (1978), no. 3, 417--426. 



\bibitem[Cha1]{Cha}
K.Y.~Chan,
{\it Spin representations of real reflection groups of noncrystallographic root systems}, J. Algebra {\bf 379} (2013), 333--354.

\bibitem[Cha2]{Cha2}
K.Y.~Chan,
{\it On a twisted Euler-Poincar\'e pairing for graded affine Hecke algebras},  preprint 2014, \texttt{arXiv:1407.0956}.

\bibitem[Ci1]{Ci}
D.~Ciubotaru,
{\it Spin representations of Weyl groups and the Springer correspondence}, J. Reine Angew. Math. {\bf 671} (2012), 199--222.

\bibitem[Ci2]{Ci2}
D.~Ciubotaru,
\emph{One-$W$-type modules for rational Cherednik algebra and cuspidal two-sided cells}, preprint 2015, \texttt{arXiv:1503.07890}.


\bibitem[CH]{CH}
D.~Ciubotaru, X.~He,
{\it Green polynomials of Weyl groups, elliptic pairings, and the extended Dirac index},  preprint 2013, \texttt{arXiv:1303.6806v2}. 

\bibitem[COT]{COT}
D.~Ciubotaru, E.~Opdam, P.~Trapa,
{\it Algebraic and analytic Dirac induction for graded affine Hecke algebras}, J. Inst. Math. Jussieu {\bf 13} (2014), no. 3, 447--486.

\bibitem[CT1]{CT}
D.~Ciubotaru, P.~Trapa,
{\it Characters of Springer representations on elliptic conjugacy classes}, Duke Math. J. {\bf 162} (2013), no. 2, 201--223.

\bibitem[CT2]{CT2}
D.~Ciubotaru, P.~Trapa,
{\it Dirac operators for rational Cherednik algebras at $t=0$}, preprint, 2011.

\bibitem[Co]{Co} A.M.~Cohen, 
\emph{Finite quaternionic reflection groups},
J. Algebra {\bf 64} (1980), no. 2, 293--324. 


\bibitem[Dr]{Dr}
V.~Drinfeld,
{\it Degenerate affine Hecke algebras and Yangians}, Funktsional. Anal. i Prilozhen. {\bf 20} (1986), no. 1, 69--70.

\bibitem[EG]{EG}
P.~Etingof, V.~Ginzburg,
{\it Symplectic reflection algebras, Calogero-Moser space, and deformed Harish-Chandra homomorphism}, Invent. Math. {\bf 147} (2002) 243--348.

\bibitem[ES]{ES}
P.~Etingof, E.~Stoica,
{\it Unitary representations of rational Cherednik algebras}, (with an appendix by Stephen Griffeth), Represent. Theory {\bf 13} (2009), 349--370.

\bibitem[GG]{GG}
W.-L.~Gan, V.~Ginzburg,
{\it Deformed preprojective algebras and symplectic reflection algebras for wreath products}, J. Algebra {\bf 283} (2005), no. 1, 350--363.

\bibitem[GGOR]{GGOR}
V.~Ginzburg, N.~Guay, E.~Opdam, R.~Rouquier,
{\it On the category O for rational Cherednik algebras}, Invent. Math. {\bf 154} (2003), no. 3, 617--651. 

\bibitem[Go]{Go}
I.~Gordon,
{\it Baby Verma modules for rational Cherednik algebras}, Bull. London Math. Soc. {\bf 35} (2003), no. 3, 321--336.

\bibitem[GM]{GM}
I.~Gordon, M.~Martino, \emph{Calogero-Moser space, restricted rational Cherednik algebras and two-sided cells}, Math. Res. Lett. {\bf 16} (2009), no. 2, 255--262.

\bibitem[HP1]{HP}
J.-S.~Huang, P.~Pand\v zi\'c,
{\it Dirac cohomology, unitary representations and a proof of a conjecture of Vogan}, J. Amer. Math. Soc. {\bf 15} (2002), 185--202.

\bibitem[HP2]{HP2}
J.-S.~Huang, P.~Pand\v zi\'c,
\emph{Dirac operators in representation theory}, Mathematics: Theory and Applications, Birkh\" auser Boston, Inc., Boston, MA, 2006. xii+199 pp.

\bibitem[KKS]{KKS}
D.~Kazhdan, B.~Kostant, S.~Sternberg, 
\emph{Hamiltonian group actions and dynamical systems of Calogero type.},
Comm. Pure Appl. Math. {\bf 31} (1978), no. 4, 481--507. 

\bibitem[Ko]{Ko}
B.~ Kostant, 
\emph{A cubic Dirac operator and the emergence of Euler number multiplets of representations for equal rank subgroups}, Duke Math. J. {\bf 100} (1999), no. 3, 447--501.

\bibitem[Lu1]{Lu}
G.~Lusztig,
\emph{Affine Hecke algebras and their graded version},
J. Amer. Math. Soc. {\bf 2} (1989), 599--635.

\bibitem[Lu2]{Lu2}
G.~Lusztig,
\emph{Characters of reductive groups over a finite field}, Annals of Mathematics Studies, {\bf 107}, Princeton University Press, Princeton, NJ, 1984.

\bibitem[Me]{Me}
E.~Meinrenken,
{\it Clifford algebras and Lie theory},  [Results in Mathematics and Related Areas. 3rd Series. A Series of Modern Surveys in Mathematics], {\bf 58}, Springer, Heidelberg, 2013. xx+321 pp.

\bibitem[Op]{Op}
E.~Opdam,
{\it A remark on the irreducible characters and fake degrees of finite real reflection groups}, Invent. Math. {\bf 120} (1995), no. 3, 447--454.


\bibitem[Pa]{Pa}
R.~Parthasarathy,
{\it Dirac operator and the discrete series}, Ann. of Math. (2) {\bf 96} (1972), 1--30.

\bibitem[RS]{RS}
A.~Ram, A.~Shepler,
{\it Classification of graded Hecke algebras for complex reflection groups},
Comment. Math. Helv. {\bf 78} (2003), no. 2, 308--334.

\bibitem[Ro]{Ro}
R.~Rouquier,
\emph{Familles et blocs d'alg\` ebres de Hecke},
C. R. Acad. Sci. Paris S\' er. I Math. {\bf 329} (1999), no. 12, 1037--1042. 

\bibitem[Vo]{Vo}
D.A.Vogan,~Jr., ``Lectures on the Dirac operator I-III", M.I.T., 1997.
\end{thebibliography}
\end{document}